\newcommand{\Ek}{\mathbf{P}_{h}}
\newcommand{\ak}{\tilde{a}_h}
\newcommand{\mk}{\tilde{m}_h}
\newcommand{\Vh}{\mathbb{V}_{h}}
\newcommand{\N}{\mathbf{N}}
\newcommand{\W}{\mathbb{W}}
\newcommand{\bnu}{\ensuremath{\boldsymbol \nu}}
\newcommand{\bpsi}{\ensuremath{\boldsymbol \psi}}
\newcommand{\bPsi}{\ensuremath{\boldsymbol \Psi}}
\newcommand{\bxi}{\ensuremath{\boldsymbol \xi}}
\newcommand{\bF}{\ensuremath{\boldsymbol F}}
\newcommand{\bN}{\ensuremath{\boldsymbol N}}
\newcommand{\bl}{\ensuremath{\boldsymbol l}}
\newcommand{\bL}{\ensuremath{\boldsymbol L}}
\newcommand{\bG}{\ensuremath{\boldsymbol G}}
\newcommand{\bP}{\ensuremath{\boldsymbol P}}
\newcommand{\bZ}{\ensuremath{\boldsymbol Z}}
\newtheorem{remark}[theorem]{Remark}
\newcommand{\step}[1]{\noindent\raisebox{1.5pt}[10pt][0pt]{\tiny\framebox{$#1$}}\xspace}
\numberwithin{equation}{section}
\numberwithin{theorem}{section}
\numberwithin{figure}{section}
\title{A priori error estimates for finite element approximations to eigenvalues and eigenfunctions of the Laplace-Beltrami operator}
\author{
  Andrea Bonito\thanks{Department of Mathematics, Texas A\&M University, College Station TX, 77843; email: {\tt bonito@math.tamu.edu}. 
  Partially supported by NSF Grant DMS-1254618.
}
\and
Alan Demlow\thanks{Department of Mathematics, Texas A\&M University, College Station TX, 77843; email: {\tt demlow@math.tamu.edu}.
Partially supported by NSF Grants DMS-1518925 and DMS-1720369.
}
\and
Justin Owen\thanks{Department of Mathematics, Texas A\&M University, College Station, TX, 77843; email: {\tt jowen6@math.tamu.edu}.  Partially supported by NSF Grants DMS-1518925 and DMS-1720369.
}
}
\begin{document}


\maketitle

\begin{abstract}
Elliptic partial differential equations on surfaces play an essential role in geometry, relativity theory, phase transitions, materials science, image processing, and other applications. They are typically governed by the Laplace-Beltrami operator. We present and analyze approximations by Surface Finite Element Methods (SFEM) of the Laplace-Beltrami eigenvalue problem.  As for SFEM for source problems, spectral approximation is challenged by two sources of errors: the geometric consistency error due to the approximation of the surface and the Galerkin error corresponding to finite element resolution of eigenfunctions. We show that these two error sources interact for eigenfunction approximations as for the source problem.  The situation is different for eigenvalues,  
where a novel situation occurs for the geometric consistency error: The degree of the geometric error depends on the choice of interpolation points used to construct the approximate surface.   Thus the geometric consistency term can sometimes be made to converge faster than in the eigenfunction case through a judicious choice of interpolation points.    
\end{abstract}

\begin{keywords}
Laplace-Beltrami operator; finite element method; eigenvalues and eigenvector approximation; cluster approximation; geometric error
\end{keywords}

\begin{AM} 65N12, 65N15, 65N25, 65N30
\end{AM}

\pagestyle{myheadings}
\thispagestyle{plain}

\markboth{A. BONITO, A. DEMLOW, AND J. OWEN}{FEM FOR LAPLACE-BELTRAMI EIGENVALUE PROBLEMS}

\section{Introduction}

The spectrum of the Laplacian is ubiquitous in the sciences and engineering. Consider the eigenvalue problem $-\Delta u= \lambda u$ on a Euclidean domain $\Omega$, with $u=0$ on $\partial \Omega$.  There is then a  sequence $0 < \lambda_1  \le \lambda_2 \le \lambda _3 \le ...$ of eigenvalues with corresponding $L_2$-orthonormal eigenfunctions $\{ u_i\}$.  Given a finite element space $\mathbb{V} \subset H_0^1(\Omega)$, the natural finite element counterpart is to find $(U, \Lambda) \in \mathbb{V} \times \mathbb{R}^+$ such that $\int_\Omega \nabla U \cdot \nabla V = \Lambda \int_\Omega UV$, $V \in \mathbb{V}$.   

Finite element methods (FEM) are a natural and widely used tool for approximating spectra of elliptic PDE.  
Analyzing the error behavior of such FEM is more challenging than for source problems because of the nonlinear nature of the problem.  A priori error estimation for FEM approximations of the eigenvalues and eigenfunctions of the Laplacian and related operators in flat (Euclidean) space is a classical topic in finite element theory; cf. \cite{PV72, Ch74, BO87, BO89}.  We highlight the review article \cite{BO91} of Babu\v{s}ka and Osborn in this regard.  These bounds are all asymptotic in the sense that they require an initial fineness condition on the mesh.  More recently, sharp bounds for eigenvalues (but not eigenfunctions) appeared in \cite{KnOs06}.  These bounds are notable because they are truly a priori in the sense that they do not require a sufficiently fine mesh.  Finally, over the past decade a number of papers have appeared analyzing convergence and optimality of adaptive finite element methods (AFEM) for eigenvalue problems \cite{DXZ08, GG09, CaGe11, DHZ15, Gallistl, DB16}.  Because sharp a priori estimates are needed in order to analyze AFEM optimality properties, some of these papers also contain improved a priori estimates.  We particularly highlight \cite{CaGe11, Gallistl} as our analysis of eigenfunction errors below largely employs the framework of these papers. 

Assume that a simple eigenpair $(\lambda,u)$ of $-\Delta$ is approximated using a degree-$r$ finite element space in the standard way.  
Roughly speaking, it is known that
\begin{align}
\label{flat_results_eigenfunction}
\|u-{\bf Z} u\|_{H^1(\Omega)} & \leq C \|u-{\bf G} u\|_{H^1(\Omega)} \le C h^r|u|_{H^{r+1}}, 
\\ |\lambda-\Lambda| & \le C(\lambda) \|u-{\bf G} u\|_{H^1(\Omega)}^2 \le C(\lambda) h^{2r}.   
\label{flat_results_eigenvalue}
\end{align}
Here $\Lambda$ is the discrete eigenvalue corresponding to $\lambda$, ${\bf G}$ is the Ritz projection, and ${\bf Z}$ is the Galerkin (energy) projection onto the discrete invariant space corresponding to $\Lambda$.  \eqref{flat_results_eigenfunction} holds for $h$ sufficiently small \cite{CaGe11, Gallistl}, while \eqref{flat_results_eigenvalue} holds assuming certain algebraic conditions on the spectrum \cite{KO06}.  Also, the constants in the first estimate are asymptotically independent of $\lambda$, while the constants in the second estimate depend in essence on the separation of $\lambda$ from the remainder of the spectrum and the degree to which the discrete spectrum respects that separation.  Corresponding ``cluster-robust'' estimates also hold for simultaneous approximation of clusters of eigenvalues.

We next describe surface finite element methods (SFEM).  Let $\gamma \subset \mathbb{R}^{D+1}$ be a smooth, closed, orientable $D$-dimensional surface, and let $\Delta_\gamma$ be the Laplace-Beltrami operator on $\gamma$.  The SFEM corresponding to the cotangent method was introduced by Dziuk \cite{Dziuk} in 1988.  Let $\Gamma$ be a polyhedral approximation to $\gamma$ having triangular faces which also serve as the finite element mesh.  The finite element space $\mathbb{V}$ consists of functions which are piecewise linear over $\gamma$, and we seek $U \in \mathbb{V}$ such that $\int_{\Gamma} \nabla_{\Gamma} U \cdot\nabla_{\Gamma} V = \int_{\Gamma} f V$, $V \in \mathbb{V}$.  In \cite{D09} Demlow developed a natural higher order analogue to this method.  SFEM exhibit two error sources, a standard Galerkin error and a geometric consistency error due to the approximation of $\gamma$ by $\Gamma$. Let $\mathbb{V}$ be a Lagrange finite element space of degree $r$ over a degree-$k$ polynomial approximation $\Gamma$, and let ${\bf G}$ be the Ritz projection onto $\mathbb{V}$.  Then (cf. \cite{Dziuk, D09}) 
\begin{align}
\|u - {\bf G} u\|_{H^1(\gamma)} &\leq C( h^r + h^{k+1}),
\label{D-abound}
\\
\|u - {\bf G} u - \left(\int_{\gamma}u - {\bf G} u d\sigma\right)\|_{L_2(\gamma)} &\leq C( h^{r+1} + h^{k+1}).
\label{D-mbound}
\end{align} 

The need for accurate approximations to Laplace-Beltrami eigenpairs arises in a variety of applications.  One approach to shape classification is based on the Laplace-Beltrami operator's spectral properties \cite{RWP05, RWP06, ReWoShNi09, RBGPS09, R10, KLO16, RSDCT09}.  For example, the spectrum has been used as a ``shape DNA'' to yield a fingerprint of a surface's shape.  One prototypical application is medical imaging.  There the underlying surface $\gamma$ is not known precisely, but is instead sampled via a medical scan.  The spectrum that is studied is thus that of a reconstructed approximate surface, often as a polyhedral approximation (triangulation).  Bootstrap methods are another potential application of Laplace-Beltrami spectral calculations \cite{BC15}.  Finally, Laplace-Beltrami eigenvalues on subsurfaces of the sphere characterize singularities in solutions to elliptic PDE arising at vertices of polyhedral domains \cite{Da88, KMR01, MR10}.    Many of these papers use surface FEM in order to calculate Laplace-Beltrami spectral properties.  While these methods show empirical evidence of success, there has to date been no detailed analysis of the accuracy of the eigenpairs calculated using SFEM.  Some of these papers also propose using higher-order finite element methods to improve accuracy, but do not suggest how to properly balance discretization of $\gamma$ with the degree of the finite element space.  A main goal of this paper is to provide clear guidance about the interaction between geometric consistency and Galerkin errors in the context of spectral problems.  

In this paper we develop error estimates for the SFEM approximation of the eigenpairs of the Laplace-Beltrami operator.  In particular, we develop a priori error estimates for the SFEM approximations to the solution of
$$
-\Delta_\gamma u = \lambda u \text{ on } \gamma.
$$
Let $0=\lambda_0 < \lambda_1 \le \lambda_2 \le ...$ be the Laplace-Beltrami eigenvalues with corresponding $L_2(\gamma)$-orthonormal eigenfunctions $\{ u_i\}$.   We show that the eigenvector error converges as the error for the source problem, up to a geometric term.   Our first main result is:
\begin{equation}
\label{surf_est}
\|u_i-{\bf Z} u_i\|_{H^1(\gamma)} \le C \|u_i-{\bf G} u_i\|_{H^1(\gamma)} +C(\lambda_i) h^{k+1} \le C(\lambda_i)( h^r + h^{k+1}).
\end{equation}
We also prove $L_2$ error bounds and explicit upper bound for $C(\lambda_i)$ in terms of spectral properties.  In addition to eigenfunction convergence rates, we prove the cluster robust estimate for the eigenvalue error:
\begin{equation}
|\lambda_i-\Lambda_i|  \le C(\lambda_i) (\|u_i-{\bf G} u_i\|_{H^1(\gamma)}^2 +h^{k+1}) \le C(\lambda_i)( h^{2r}+h^{k+1}),
\label{ClusterEigBound}
\end{equation}
where as above, explicit bounds for $C(\lambda_i)$ are given below.

Numerical results presented in Section \ref{sec7} reveal that \eqref{ClusterEigBound} is not sharp for $k>1$. The deal.ii library \cite{BHK:07} uses quadrilateral elements and Gauss-Lobatto points to interpolate the surface. The geometric consistency error for every shape we tested using deal.ii was found to be $O(h^{2k})$ rather than $O(h^{k+1})$ as in \eqref{ClusterEigBound}. This inspired our second main result which is stated in Theorem \ref{Superconvergence} in Section \ref{sec6}:
\begin{equation*}
|\lambda_i-\Lambda_i|\lesssim h^{2r} + h^{2k} + h^\ell.
\end{equation*}
Here $\ell$ is the order of the quadrature rule associated with the interpolation points used to construct the surface. Thus with judicious choice of interpolation points, it is possible to obtain superconvergence for the geometric consistency error when $k>1$. This phenomenon is novel as a geometric error of order $h^{k+1}$ has been consistently observed in the literature for a variety of error notions. We also investigate this framework in the context of one-dimensional problems and triangular elements. 

We finally comment on our proofs.  Geometric consistency errors fit into the framework of variational crimes \cite{SF73}.  Banerjee and Osborn \cite{BO90, Ban92} considered the effects of numerical integration on errors in finite element eigenvalue approximations, but did not provide a general variational crimes framework. Holst and Stern analyzed variational crimes analysis for surface FEM within the finite element exterior calculus framework and also briefly consider eigenvalue problems \cite{HoSt2012}.  Their discussion of eigenvalue problems does not include convergence rates or a detailed description of the interaction of geometric and Galerkin errors. The recent paper \cite{CEM14} gives a variational crimes analysis for eigenvalue problems that applies to surface FEM.  However, their analysis yields suboptimal convergence of the geometric errors in the eigenvalue analysis, considers a different error quantity than we do, and does not easily allow for determination of the dependence of constants in the estimates on spectral properties.  

In Section \ref{sec2} we give preliminaries. In Section \ref{sec3}, we prove a cluster-robust bound for the eigenvalue error which is sharp for the practically most important case $k=1$.  We also establish spectral convergence, which is foundational to all later results.  In Section \ref{sec4} we prove eigenfunction error estimates.  In Section \ref{sec5} we numerically confirm these convergence rates 
and investigate the sharpness of the constants in our bounds with respect to spectral properties.   In Section \ref{sec6} we prove superconvergence of eigenvalues and in Section \ref{sec7} provide corresponding numerical results.

\section{Surface Finite Element Method for Eigenclusters} \label{sec2}

\subsection{Weak Formulation and Eigenclusters}
We first define the set 
$$
H^1_\#(\gamma) := \left\{v\in H^1(\gamma): \int_\gamma v ~ d\sigma = 0\right\} \subset H^1(\gamma).
$$
The problem of interest is to find $(u,\lambda)$ satisfying $-\Delta_\gamma u = \lambda u$ with $\int_\gamma u=0$.  The corresponding weak formulation is:  Find $(u,\lambda)\in H^1_\#(\gamma) \times \mathbb{R}^+$ such that
\begin{equation}\label{weig}
\int_{\gamma}\nabla_{\gamma} u \cdot \nabla_{\gamma} v d\sigma
=
\lambda \int_{\gamma}uv ~ d\sigma \qquad \forall v\in H^1_\#(\gamma).
\end{equation}
In order to shorten the notation, we define the bilinear form on $H^1(\gamma)$ and the $L_2$ inner product on $L_2(\gamma)$ respectively as 
\begin{align}\label{d:a}
\tilde a(u,v) &:=\int_{\gamma}\nabla_{\gamma} u \cdot \nabla_{\gamma} v ~ d\sigma,
\\
\label{d:m}
\tilde m(u,v) &:= \int_{\gamma}uv ~ d\sigma.
\end{align}
We equip $H^1(\gamma)$ with the norm $\|.\|_{\tilde a} := \sqrt{\tilde a(.,.)}$.
We also use the $\tilde m(.,.)$ bilinear form to define the $L_2$ norm on $\gamma$: $\|.\|_{\tilde m} := \sqrt{\tilde m(.,.)}$.  We denote by $\{u_i\}_{i=1}^\infty$ a corresponding orthonormal basis (with respect to $\tilde m(\cdot, \cdot)$) of $H^1_\#(\gamma)$ consisting of eigenfunctions satisfying \eqref{weig}.

We wish to approximate an eigenvalue cluster.  For $n \ge 1$ and $N \ge 0$, we assume  
\begin{equation}\label{a:separation}
\lambda_{n-1} < \lambda_n \qquad \textrm{and} \qquad  \lambda_{n+N} < \lambda_{n+N+1}
\end{equation}
so that the targeted cluster of eigenvalues $\lambda_i$, $i\in J:=\{n, ..., n+N\}$ is separated from the remainder of the spectrum. 

\subsection{Surface approximations} \label{ss:prelim}
\textit{Distance Function.}
We assume that $\gamma$ is a compact, orientable, $C^\infty$,
$D$-dimensional surface without boundary which is embedded in $\mathbb{R}^{D+1}$. Let $d$ be the oriented distance function for $\gamma$ taking negative values in the bounded component of $\mathbb{R}^{D+1}$ delimited by $\gamma$. The outward pointing unit normal of $\gamma$ is then $\bnu := \nabla d$. We denote by $\mathcal{N}\subset \mathbb{R}^{D+1}$ a strip about $\gamma$ of sufficiently small width so that any point $x\in \mathcal{N}$ can be uniquely decomposed as 
\begin{equation}\label{e:proj}
x = \bpsi(x) +d(x)\bnu(x).
\end{equation}
$\bpsi(x)$ is the unique orthogonal projection onto $\gamma$ of $x\in \mathcal N$. We define the projection onto the tangent space of $\gamma$ at $x \in \mathcal N$ as $P(x) := I - \bnu(x)\otimes \bnu(x)$ and the surface gradient satisfies $\nabla_\gamma=P\nabla$.
From now, we assume that the diameter of the strip $\mathcal N$ about $\gamma$ is small enough for the decomposition \eqref{e:proj} to be well defined.

\paragraph{Approximations of $\gamma$}  Multiple options for constructing polynomial approximations of $\gamma$ have appeared. We prove our results under abstract assumptions in order to ensure broad applicability.  Let $\overline{\Gamma}$ be a polyhedron or polytope (depending on $D=\dim(\gamma)$) whose faces are triangles or tetrahedron.  This assumption is made for convenience but is not essential.  The set of all triangular faces of $\overline{\Gamma}$ is denoted $\overline{\mathcal T}$.
  
The higher order approximation $\Gamma$ of $\gamma$ is constructed as follows.  
Letting $\overline{T} \in \overline{\mathcal{T}}$, we define the degree-$k$ approximation of $\bpsi (\overline{T}) \subset \gamma$ via the Lagrange basis functions $\{\phi_1,...,\phi_{n_k}\}$ with nodal points $\{x^1,...,x^{n_k}\}$ on $\overline{T}$. For $x\in \overline{T}$, we have the discrete projection $\bL: \overline{\Gamma} \to \Gamma$ defined by
\begin{equation}\label{d:interpolated_lift}
\bL(x) := \sum_{j=1}^{n_k}\bL(x^j)\phi_j(x), ~~ \hbox{ where } |\bL(x^j)-\bpsi(x^j)|  \le C h^{k+1}.
\end{equation}
Since we have used the Lagrange basis we have a continuous piecewise polynomial approximation of $\gamma$ which we define as
\begin{equation}\label{e:degreek}
\Gamma := \{\bL(x)\ : \ x\in \overline{\Gamma}\} \qquad \text{and} \qquad \mathcal T := \{ \bL(\overline T) \ : \ \overline{T} \in \overline{\mathcal T} \}.
\end{equation}
The requirement $|\bL(x^j)-\bpsi(x^j)|  \le C h^{k+1}$ ensures good approximation of $\gamma$ by $\Gamma$ while allowing for instances where $\Gamma$ and $\gamma$ do not intersect at interpolation nodes, or even possibly for $\gamma \cap \Gamma = \emptyset$.  This could occur when $\Gamma$ is constructed from imaging data or in free boundary problems.  The assumption \eqref{d:interpolated_lift} also allows for maximum flexibility in constructing $\Gamma$, as we could for instance take $\bL(x^j)=\bl(x^j)$ with $\bl$ a piecewise smooth bi-Lipschitz lift $\bl: \overline{\Gamma} \rightarrow \gamma$ (cf. \cite{MMN11, BCMN:13, BCMMN16}).  

\paragraph{Shape regularity and quasi-uniformity} Associated with a degree-$k$ approximation $\Gamma$ of $\gamma$, we follow \cite{BP:11} and let $\rho:=\rho(\mathcal T)$ be its  shape regularity constant defined as the largest positive real number such that
$$
\rho | \bxi | \leq |D\bF_T(x) \bxi | \leq \rho^{-1} |\bxi|, \qquad \forall \bxi \in \mathbb R^D, \quad \forall T \in \mathcal T \hbox{ and } x \in T, 
$$
where
\begin{equation}
\bF_T := \bL \circ \overline{\bF}_T
\label{effT}
\end{equation}
with $\overline{\bF}_T$ the natural affine mapping from a Kuhn (reference) simplex $\widehat T \subset \mathbb R^D$ to $\overline{T}$.
Further, the quasi-uniform constant $\eta:=\eta(\mathcal T)$ of $\mathcal T$ is the smallest constant such that 
$$
h:=\max_{T\in \mathcal T} \textrm{diam}(T) \leq \eta \min_{T \in \mathcal T} \textrm{diam}(T).
$$

We recall that $\bnu=\nabla d: \mathcal{N} \rightarrow \mathbb{R}^{D+1}$ is the normal vector on $\gamma$ and let $\bN$ be the normal vector on $\Gamma$.  The assumption \eqref{d:interpolated_lift} yields
\begin{align}\label{e:d_estim}
\|d\|_{L_\infty(\Gamma)} & \le Ch^{k+1},
\\ \|\bnu-\bN\|_{L_\infty(\Gamma)} & \le Ch^k,
\\
\label{e:lift_estim}
\|\bL-\bpsi\|_{W^{i,\infty}(\overline{T})} & \le Ch^{k+1-i}, ~~\overline{T} \in \overline{\mathcal{T}},  ~~0 \le i \le k+1,
\end{align}
where $C$ is a constant only depending on $\rho(\mathcal T)$, $\eta(\mathcal T)$ and $\gamma$.

\paragraph{Function Extensions} We assume $\Gamma$ is contained in the strip $\mathcal N$. 
If $\tilde u$ is a function defined on $\gamma$,  we extend it to $\mathcal{N}$ as $u = \tilde u\circ \bpsi$, where $\bpsi$ is defined in \eqref{e:proj}.  Note that $\bpsi |_\Gamma:\Gamma \rightarrow \gamma$ is also a smooth bijection. We can leverage this to relate functions defined on the two surfaces. For a function $u$ defined on $\Gamma$ we define its lift to $\gamma$ as $\tilde u = u \circ\bpsi|_{\Gamma}^{-1}$. 
As a general rule, we use the tilde symbol to denote quantities defined on $\gamma$ but when no confusion is possible, the tilde symbol is dropped.

\paragraph{Bilinear Forms on $\Gamma$}
Given a degree-$k$ approximation $\Gamma$ of $\gamma$, 
let $H^1_\#(\Gamma):=\{ v \in H^1(\Gamma): \int_{\Gamma} v ~ d\Sigma = 0\} \subset H^1(\Gamma)$
and define the forms on $H^1(\Gamma)$:  
\begin{equation}
A(u,v) :=\int_{\Gamma}\nabla_{\Gamma} u \cdot \nabla_{\Gamma} v ~ d\Sigma,~~~~~M(u,v) :=\int_{\Gamma}uv~ d\Sigma.
\end{equation}
The energy and $L_2$ norms on $\Gamma$ are then $\|.\|_{A} := \sqrt{A(.,.)}$ and $\|.\|_{M} := \sqrt{M(.,.)}$.  

We have already noted that $\bpsi|_\Gamma$ provides a bijection from $\Gamma$ to $\gamma$. Its smoothness (derived from the smoothness of $\gamma$) guarantees that  $H^1(\gamma)$ and $H^1(\Gamma)$ are isomorphic. 
Moreover,  the bilinear form $A(.,.)$ on $H^1(\Gamma)$ can be defined on $H^1(\gamma)$ 
\begin{equation}
\widetilde A(\tilde u ,\tilde v  ) := \int_{\gamma}A_\gamma\nabla_{\gamma} \tilde u  \cdot \nabla_{\gamma} \tilde v ~   d\sigma = \int_{\Gamma}\nabla_{\Gamma} u \cdot \nabla_{\Gamma} v~ d\Sigma = A(u,v)
\label{akdef}
\end{equation}
and similarly for the $L^2$ inner product
\begin{equation}
\widetilde M(\tilde u ,\tilde v  ) :=\int_{\gamma}\tilde u \tilde v  \frac{1}{Q} ~ d\sigma = \int_{\Gamma} uv~ d\Sigma =M(u,v).
\label{mkdef}
\end{equation}
Here $Qd\Sigma = d\sigma$ and $A_\gamma$ depends on the change of variable $\tilde x = \bPsi(x)$.
We refer to \cite{Dziuk, D09} for additional details. 
Again, we use the notations $\| . \|_{\widetilde A}:= \sqrt{\widetilde A(.,.)}$ and $\| . \|_{\widetilde M}:= \sqrt{\widetilde M(.,.)}$.
For the majority of this paper we will work with these lifted forms.


\subsection{Geometric approximation estimates}
The results in this section are essential for estimating effects of approximation of $\gamma$ by $\Gamma$.
Recall that we assume that the diameter of the strip $\mathcal N$ about $\gamma$ is small enough for the decomposition \eqref{e:proj} to be well defined and that $\Gamma \subset \mathcal N$.

The following lemma provides a bound on the geometric quantities $A_\gamma$ and $Q$ appearing in \eqref{akdef} and \eqref{mkdef}; cf. \cite{D09} for proofs.  As we make more precise in Section \ref{sec:sfem}, we write $f \lesssim g$ when $f \le Cg$ with $C$ a nonessential constant.  
\begin{lemma}[Estimates on $Q$ and $A_\gamma$] 	\label{LinfG}
Let $P=I-\bnu \otimes \bnu$ be the projection onto the tangent plane of $\gamma$. Let $A_{\gamma}$ and $Q$ as in \eqref{akdef} and \eqref{mkdef} respectively. Then
	\begin{align}
	\left\|1-1/Q\right\|_{L_\infty(\gamma)} + \|A_\gamma - P\|_{L_\infty(\gamma)}\lesssim h^{k+1}. 
	\end{align}
\end{lemma}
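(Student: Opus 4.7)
The plan is to derive explicit formulas for $Q$ and $A_\gamma$ in terms of the distance function $d$, the Weingarten map $H:=D^2d$, and the two normals $\bnu$, $\bN$, then bound each ingredient using the geometric estimates \eqref{e:d_estim}--\eqref{e:lift_estim} already recorded.

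First, I would use the closest-point decomposition \eqref{e:proj}: for $x\in \Gamma$, $\bpsi(x)=x-d(x)\bnu(x)$. Differentiating tangentially along $\Gamma$ gives the well-known identity
\begin{equation*}
D\bpsi(x)=(I-d(x)H(x))\,(I-\bnu(x)\otimes \bN(x))\,P_\Gamma(x),
\end{equation*}
where $P_\Gamma=I-\bN\otimes\bN$. From this one derives the surface Jacobian
\begin{equation*}
Q(\bpsi(x))=|\det{}_\tau D\bpsi(x)|=\bigl(\bnu(x)\cdot\bN(x)\bigr)\prod_{i=1}^{D}(1-d(x)\kappa_i(x)),
\end{equation*}
with $\kappa_i$ the principal curvatures, and the representation of the transported metric
\begin{equation*}
A_\gamma(\bpsi(x))=\frac{1}{Q(\bpsi(x))}\,P(x)(I-dH)^{-1}P_\Gamma(I-dH)^{-1}P(x).
\end{equation*}
Derivations of these formulas are standard and can be quoted from \cite{Dziuk, D09}.

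Next I would estimate each factor. Since $\gamma$ is $C^\infty$, the principal curvatures $\kappa_i$ and hence $H$ are bounded on $\mathcal N$, so $\|d\kappa_i\|_{L_\infty(\Gamma)}\lesssim h^{k+1}$ by \eqref{e:d_estim}. For the normals, $|\bnu-\bN|^2=2(1-\bnu\cdot\bN)$ yields
\begin{equation*}
|1-\bnu\cdot\bN|\lesssim \|\bnu-\bN\|_{L_\infty(\Gamma)}^{2}\lesssim h^{2k}\lesssim h^{k+1}
\end{equation*}
for $k\ge 1$. Combining these in the product formula above gives $\|1-Q\|_{L_\infty(\gamma)}\lesssim h^{k+1}$, and since $Q$ is bounded below away from zero for $h$ small, $\|1-1/Q\|_{L_\infty(\gamma)}\lesssim h^{k+1}$.

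For $A_\gamma-P$ I would expand $(I-dH)^{-1}=I+dH+O(d^2)$ and use that $P_\Gamma=P+(P_\Gamma-P)$ with $\|P_\Gamma-P\|_{L_\infty(\Gamma)}\lesssim \|\bnu-\bN\|_{L_\infty(\Gamma)}\lesssim h^k$, again improved to $h^{k+1}$ after symmetrizing: $P(P_\Gamma-P)P=-P(\bN\otimes\bN)P$ and $|P\bN|=|P(\bN-\bnu)|\lesssim h^k$, so $\|P(\bN\otimes \bN)P\|_{L_\infty}\lesssim h^{2k}\lesssim h^{k+1}$. Together with $\|1-1/Q\|_{L_\infty(\gamma)}\lesssim h^{k+1}$ and $P\cdot P=P$, this yields $\|A_\gamma-P\|_{L_\infty(\gamma)}\lesssim h^{k+1}$.

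The only mildly delicate point is the normal factor: a naive estimate $\|\bnu-\bN\|_{L_\infty}\lesssim h^k$ looks like it would degrade the bound. The resolution is the quadratic cancellation $1-\bnu\cdot\bN=\tfrac{1}{2}|\bnu-\bN|^2$, together with the observation that the tangential part of $\bN$ on $\gamma$ (via the lift) is $O(h^k)$ and only enters $A_\gamma-P$ quadratically after projecting by $P$. Everything else is a straightforward Taylor expansion using $|d|\lesssim h^{k+1}$.
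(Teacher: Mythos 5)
Your proposal is correct and is essentially the standard argument from \cite{D09}, which is exactly what the paper itself cites for this lemma (no proof is given in the paper). The only small slip is in the explicit form of the transported metric: from $D\bpsi = P - dH$ one gets $A_\gamma = Q^{-1}\,P(I - dH)P_\Gamma(I - dH)P$ with $(I - dH)$ rather than $(I - dH)^{-1}$; this is immaterial since both are $I + O(h^{k+1})$, and the essential observations you isolate --- the quadratic cancellation $1 - \bnu\cdot\bN = \tfrac{1}{2}|\bnu - \bN|^2 = O(h^{2k})$ and the $O(h^{2k})$ size of $P(P_\Gamma - P)P$ after symmetrization --- are precisely what upgrade the naive $O(h^k)$ bound to the stated $O(h^{k+1})$.
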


The above geometric estimates along with \eqref{akdef} and \eqref{mkdef} immediately yield estimates for the approximations of $\tilde m(.,.)$ and $\tilde a(.,.)$ by $\widetilde M(.,.)$ and $\widetilde A(.,.)$ respectively.

\begin{corollary}[Geometric estimates]\label{c:geom}
	The following relations hold:
	\begin{align}
	|(\tilde m-\widetilde M)(v,w)|\lesssim h^{k+1}\|v\|_{\tilde m}\|w\|_{\tilde m}, \qquad \forall v,w \in L^2(\gamma)
	\label{m-mk}
\\
	|(\tilde a - \widetilde A)(v,w)|\lesssim h^{k+1}\|v\|_{\tilde a}\|w\|_{\tilde a}, \qquad \forall v,w \in H^1(\gamma).
	\label{a-ak}
	\end{align}
\end{corollary}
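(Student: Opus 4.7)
The plan is to derive both inequalities directly from the definitions \eqref{akdef}, \eqref{mkdef} together with the $L_\infty$ estimates in Lemma~\ref{LinfG} and the Cauchy--Schwarz inequality. There is no real obstacle here; this is a routine corollary, and I expect the whole argument to be a few lines for each bound.

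For the mass-form estimate \eqref{m-mk}, I would use \eqref{mkdef} to write
\begin{equation*}
(\tilde m - \widetilde M)(v,w) \;=\; \int_\gamma vw\,d\sigma - \int_\gamma vw\,\tfrac{1}{Q}\,d\sigma \;=\; \int_\gamma \bigl(1-\tfrac{1}{Q}\bigr)\,vw\,d\sigma,
\end{equation*}
apply Hölder's inequality to pull out $\|1-1/Q\|_{L_\infty(\gamma)}$, and then invoke Lemma~\ref{LinfG} to bound this factor by $h^{k+1}$. The remaining $L_2$-factors are exactly $\|v\|_{\tilde m}\|w\|_{\tilde m}$ by the definition of the $\tilde m$-norm.

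For the stiffness-form estimate \eqref{a-ak}, I would similarly use \eqref{akdef}, but here I first observe that because $\nabla_\gamma v$ and $\nabla_\gamma w$ are tangential on $\gamma$, the identity $P\nabla_\gamma v=\nabla_\gamma v$ allows me to write $\tilde a(v,w)=\int_\gamma P\nabla_\gamma v\cdot \nabla_\gamma w\,d\sigma$. Subtracting yields
\begin{equation*}
(\tilde a - \widetilde A)(v,w) \;=\; \int_\gamma (P-A_\gamma)\nabla_\gamma v\cdot \nabla_\gamma w\,d\sigma,
\end{equation*}
and then Hölder together with $\|P-A_\gamma\|_{L_\infty(\gamma)}\lesssim h^{k+1}$ from Lemma~\ref{LinfG} gives the claim once $\|\nabla_\gamma v\|_{L_2(\gamma)}=\|v\|_{\tilde a}$ is recognized. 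If $A_\gamma$ as stated is a matrix whose action on non-tangential components is irrelevant, one would use that $A_\gamma\nabla_\gamma v$ equals $A_\gamma P\nabla_\gamma v$; the only minor care is handling this tangential/non-tangential bookkeeping, which is standard and contained in the references \cite{Dziuk, D09}.
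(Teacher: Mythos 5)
Your proposal is correct and coincides with the approach the paper has in mind: the paper presents this as an immediate consequence of Lemma~\ref{LinfG} together with the definitions \eqref{akdef}--\eqref{mkdef}, which is exactly the Hölder-plus-$L_\infty$-geometric-estimate argument you spell out. The tangential bookkeeping you flag ($P\nabla_\gamma v = \nabla_\gamma v$) is the only nontrivial observation, and you handle it correctly.
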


The following relations regarding the equivalence of norms are found e.g. in \cite{D09}:
\begin{equation}\label{e:equiv_rough}
\| .\|_{\widetilde A}  \lesssim \| . \|_{\tilde a} \lesssim \| .\|_{\widetilde A} \qquad \textrm{and} \qquad 
\| .\|_{\widetilde M}  \lesssim \| . \|_{\tilde m} \lesssim \| .\|_{\widetilde M}.
\end{equation}
They are valid under the assumption that the diameter of the strip $\mathcal N$ around $\gamma$ is small enough and that $\Gamma \subset \mathcal N$.
We now provide a slight refinement of the above equivalence relations leading to sharper constants. 

\begin{corollary}[Equivalence of norms]
	Assume that the diameter of the strip $\mathcal N$ around $\gamma$ is small enough. There exists constant $C$ only depending on $\gamma$ and on the shape-regularity and quasi-uniformity constants $\rho(\mathcal T)$, $\eta(\mathcal T)$ such that 
	\begin{align}
	\|.\|_{\widetilde A}  &\leq (1+C h^{k+1})\|.\|_{\tilde a}, \qquad  \|.\|_{\tilde a}\leq (1+Ch^{k+1}) \| .\|_{\widetilde A},
	\label{aka}
\\
	\|.\|_{\widetilde M}  &\leq (1+C h^{k+1})\|.\|_{\tilde m}, \qquad \|.\|_{\tilde m} \leq (1+Ch^{k+1})\|.\|_{\widetilde M}.
	\label{mkm}
	\end{align}
\end{corollary}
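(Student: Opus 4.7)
The plan is to derive the refined norm equivalences directly from Corollary \ref{c:geom} by taking the diagonal $v=w$ and then extracting the square root. Setting $w=v$ in \eqref{a-ak} produces
\begin{equation*}
\bigl| \|v\|_{\tilde a}^2 - \|v\|_{\widetilde A}^2 \bigr| \leq C h^{k+1}\|v\|_{\tilde a}^2,
\end{equation*}
so that $\|v\|_{\widetilde A}^2 \le (1+Ch^{k+1})\|v\|_{\tilde a}^2$ and, provided $h$ is small enough that $Ch^{k+1}\le 1/2$, also $\|v\|_{\widetilde A}^2 \ge (1-Ch^{k+1})\|v\|_{\tilde a}^2$. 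The same reasoning applied to \eqref{m-mk} yields the corresponding two-sided bound for the mass norms.

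Next I would take square roots and clean up the resulting constants. Using the elementary inequalities $\sqrt{1+x}\le 1+x$ for $x\ge 0$ and $\sqrt{1-x}\ge 1-x$ for $x\in[0,1]$, one gets
\begin{equation*}
(1-Ch^{k+1})\|v\|_{\tilde a} \leq \|v\|_{\widetilde A} \leq (1+Ch^{k+1})\|v\|_{\tilde a},
\end{equation*}
which already establishes \eqref{aka} in one direction. For the reverse inequality in \eqref{aka} one divides by $1-Ch^{k+1}$ and uses $(1-x)^{-1}\le 1+2x$ for $x\in[0,1/2]$ to convert the bound into the form $\|v\|_{\tilde a}\le (1+C'h^{k+1})\|v\|_{\widetilde A}$, after renaming the constant. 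The identical argument with $\|\cdot\|_{\tilde m}$, $\|\cdot\|_{\widetilde M}$ delivers \eqref{mkm}.

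Regarding dependence of the constant, the only $h$-dependent ingredient is the constant in Corollary \ref{c:geom}, which in turn traces back through Lemma \ref{LinfG} to estimates \eqref{e:d_estim}--\eqref{e:lift_estim}. Those estimates' constants depend only on $\gamma$ and on the shape-regularity and quasi-uniformity constants $\rho(\mathcal T)$, $\eta(\mathcal T)$, so the same is true of the constant $C$ appearing in \eqref{aka}--\eqref{mkm}.

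No substantive obstacle is expected: the proof is essentially a bookkeeping exercise converting the bilinear-form perturbation estimate of Corollary \ref{c:geom} into a multiplicative norm perturbation. The only subtlety worth flagging is the smallness assumption on $h$, which is needed both to make the quantity under the square root positive and to absorb the $(1-Ch^{k+1})^{-1}$ factor into a bound of the form $1+\tilde C h^{k+1}$; this is precisely the hypothesis already imposed that $\mathcal N$ be a narrow-enough tubular neighborhood and $\Gamma\subset\mathcal N$.
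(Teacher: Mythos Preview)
Your proposal is correct and follows essentially the same route as the paper: plug $w=v$ into the geometric estimate of Corollary~\ref{c:geom}, obtain a two-sided bound on the squared norms, and take square roots. The only cosmetic difference is that for the reverse inequality the paper implicitly redoes the argument symmetrically (using the rough equivalence \eqref{e:equiv_rough} to place $\|v\|_{\widetilde A}$ on the right of \eqref{a-ak}), whereas you instead divide by $1-Ch^{k+1}$ and invoke an explicit smallness of $h$; both are fine.
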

\begin{proof}
    For brevity, we only provide the proof of \eqref{aka} as the arguments to guarantee \eqref{mkm} are similar and somewhat simpler. 
    Let $v \in H^1(\gamma)$. We have
	\begin{equation}\label{e:Aa}
	\|v\|^2_{\widetilde A} -  \|v\|_{\tilde a}^2 = \widetilde A (v,v) - \tilde a(v,v) = (\widetilde A-\tilde a)(v,v) 
	\end{equation}
	so that in view of the geometric estimate \eqref{aka}, we arrive at
	$$
	\|v\|^2_{\widetilde A} \leq \|v\|_{\tilde a}^2 + |(\widetilde A-\tilde a)(v,v)| \leq (1 + Ch^{k+1})\|v\|^2_{\tilde a}.
	$$
	When $x \ge 0$, the slope of $\sqrt{1+x}$ is greatest at $x=0$ with a value of $\frac{1}{2}$, so $\sqrt{1+x} \leq 1+ \frac{1}{2}x$.  Thus $\sqrt{1+Ch^{k+1}}\leq 1+\frac{1}{2}Ch^{k+1}$, and the first estimate in \eqref{aka} follows by taking a square root.  The remaining estimates are derived similarly.	
	\end{proof}


\subsection{Surface Finite Element Methods}
\label{sec:sfem}

We construct approximate solutions to the eigenvalue problem \eqref{weig} via surface FEM consisting of a finite element methods on degree-$k$ approximate surfaces. See \cite{D09, Dziuk} for more details.

\paragraph{Surface Finite Elements}
Recall that the degree-$k$ approximate surface $\Gamma$ and its associated subdivision $\mathcal T$ are obtained by lifting $\overline{\Gamma}$ and $\overline{\mathcal T}$ via \eqref{e:degreek}.
Similarly, finite element spaces on $\Gamma$ consist of finite element spaces on the (flat) subdivision  $\overline{\mathcal T}$ lifted to $\Gamma$ using the interpolated lift $\bL$ given by \eqref{d:interpolated_lift}.  More precisely, for $r\geq 1$ we set
\begin{equation}\label{e:fem_base}
\mathbb V:=\mathbb{V}(\Gamma,\mathcal T):= \{V \in H^1(\Gamma) \ : \ V = \overline V \circ \bL^{-1}, \ \textrm{with} \ \overline V|_{\overline{T}} \in \mathbb{P}^r(\overline{T}) \quad   \forall \overline{T} \in \overline{\mathcal{T}} \}.
\end{equation}
Here $\mathbb P^r(\overline{T})$ denotes the space of polynomials of degree at most $r$ on $\overline{T}$. 
Its subspace consisting of zero mean value functions is denoted $\mathbb V_\#$:
$$
\mathbb V_\#:= \mathbb V_\#(\Gamma) = \{V \in \mathbb V \ : \ \int_\Gamma V \ d\Sigma = 0\}.
$$

\paragraph{Discrete Formulation}
The proposed finite element formulation of the eigenvalue problem on $\Gamma$ reads: Find $(U,\Lambda)\in \mathbb V_\# \times \mathbb{R}^+$  such that 
\begin{equation}
\label{e:discrete}
A(U,V) = \Lambda ~ M(U,V)\qquad \forall V\in \mathbb V_\#. 
\end{equation}
By the definitions \eqref{akdef}, \eqref{mkdef} of $\widetilde A(.,.)$ and $\widetilde M(.,.)$, relations \eqref{e:discrete} can be rewritten 
$$
\widetilde A(\tilde U,\tilde V) = \Lambda\ \widetilde M( \tilde U,\tilde V) \qquad  \forall V\in \mathbb V_\#.
$$
We denote by $0<  \Lambda_{1} \le ... \le \Lambda_{\dim (\mathbb{V}_\#)}$ and 
$\{U_{1},..., U_{\dim(\mathbb{V}_\#)} \}$  the positive discrete eigenvalues the corresponding $M$-orthonormal discrete eigenfunctions satisfying $M(U_i,1)=0$, $i=1,...,\dim(\mathbb V_\#)$.  
From the definition \eqref{mkdef} of $\widetilde M(.,.)$, $\{ \tilde U_{i}  \}_{i=1}^{\dim(\mathbb V_\#)}$ are pairwise  $\widetilde M-$orthogonal and $\widetilde M(\tilde U_i,1)=0$, for $i=1,...,\dim(\mathbb V_\#)$.

\paragraph{Ritz projection}
We define a Ritz projection for the discrete bilinear form 
$$
\bG: H^1(\gamma)\to \mathbb V_\# 
$$
for any $\tilde v \in H^1(\gamma)$ as the unique finite element function $\bG \tilde v := W \in \mathbb V_\#$ satisfying
\begin{equation}
\widetilde A(\tilde W, \tilde V) = \widetilde A(\tilde v,\tilde  V), \qquad \forall V\in \mathbb V_\#.
\label{GRitz}
\end{equation}

\paragraph{Eigenvalue cluster approximation}

We recall that we target the approximation of an eigencluster indexed by $J$ satisfying the separation assumption \eqref{a:separation}.
We denote the discrete eigencluster and orthonormal basis (with respect to $\widetilde{M}(\cdot, \cdot)$)  by $\{\Lambda_{n}, ..., \Lambda_{n+N}\}\subset \mathbb R^+$ and $\{U_{n},..., U_{n+N}\}\subset \mathbb V_\#$. In addition, we use the notation
$$
\W_\#  := \text{span}\{U_{i}: i \in J\}
$$
to denote the discrete invariant space. We also define the quantity
\begin{equation}\label{d:mu}
\mu(J) := \max_{\ell \in J} \max_{j \notin J} \left | \frac{\lambda_\ell}{\Lambda_{j}-\lambda_\ell} \right |,
\end{equation}
which will play an important role in our eigenfunction estimates. It is finite provided $h$ is sufficiently small, see Remark~\ref{r:mu}.  

\paragraph{Projections onto $\W_\#$}
We denote by $\bP :H^1(\gamma)\to \W_\#$ the $\widetilde M(.,.)$ projection onto $\W_\#$ , i.e., for $\tilde v \in H^1(\gamma)$, $\bP v := W \in \W_\#$ satisfies
$$
\widetilde M(\tilde W,  \tilde V) = \widetilde M(\tilde v,\tilde V),\qquad  \forall V\in \W_\#.
$$

The other projection operator onto $\W_\#$ is defined by
$$
\bZ: H^1(\gamma) \rightarrow \mathbb W_\# \hbox{ s.t. }
\widetilde A(\tilde \bZ \tilde v,\tilde V) = \widetilde A(\tilde v, \tilde V), \qquad \forall V\in\W_\#.
$$
Notice that $\bZ$ can be thought of as the Galerkin projection onto $\W_\#$, since
\begin{equation}\label{e:bZ}
\bZ\tilde v = \bP ( \widetilde \bG(\tilde v)).
\end{equation}

\paragraph{Alternate surface FEM}  In our analysis of eigenvalue errors we employ a {\it conforming} parametric surface finite element method as an intermediate theoretical tool.  For this, we introduce a finite element space on $\gamma$: 
\begin{equation*}
\widetilde {\mathbb V}:= \{ \tilde V  \ : \ V \in \mathbb V \}.
\end{equation*}
The space of vanishing mean value functions (on $\gamma$) is denoted by  $\widetilde{\mathbb V}_\#$:
\begin{equation*}
\widetilde{\mathbb V}_\#:= \{V \in \widetilde{\mathbb V} \ : \ \int_\gamma V ~ d\sigma = 0\}.
\end{equation*}

 For $i=1,...,\dim(\widetilde{\mathbb V}_\#)$, we let $(U^\gamma, \Lambda^{\gamma}_i) \in \widetilde{\mathbb V}_\#\times \mathbb{R}^+$ be finite element eigenpairs computed on the continuous surface $\gamma$, that is, 
\begin{equation}\label{e:eigenproblem_discrete_gamma}
\tilde a(U^\gamma_i, V) = \Lambda_{i}^\gamma \tilde m(U^\gamma_i, V) \qquad \forall V^\gamma \in \widetilde{\mathbb V}_\#.
\end{equation}

\paragraph{Notation and constants} Generally we use small letters ($\gamma$, $u$, $v$,...) to denote quantities lying in infinite dimensional spaces in opposition to capital letters used to denote quantities defined by a finite number of parameters ($\Gamma$, $U$, $V$).  
We also recall that for every function $v:\Gamma \rightarrow \mathbb R$ defines uniquely (via the lift  $\bPsi|_\Gamma$) a function $\tilde v:\gamma \rightarrow \mathbb R$ and conversely. We identify quantities defined on $\gamma$ using a tilde but drop this convention when no confusion is possible, i.e. $v$ could denote a function from $\Gamma$ to $\mathbb R$ as well as its corresponding lift defined from $\gamma$ to $\mathbb R$.

Whenever we write a constant $C$ or $c$, we mean a generic constant that may depend on the regularity properties of $\gamma$ and the Poincar\'e-Friedrichs constant $C_F$ in the standard estimate $\|v\|_{L_2(\gamma)} \le C_F \|v\|_{a}$, $v \in H^1_\#(\gamma)$ and on the shape-regularity $\rho(\mathcal T)$ and quasi-uniformity $\eta(\mathcal T)$ constants, but not otherwise on the spectrum of $-\Delta_\gamma$ and $h$. In addition, by $f \lesssim g$ we mean that $f \le Cg$ for such a nonessential constant $C$.  All other dependencies on spectral properties will be made explicit.  

\section{Clustered Eigenvalue Estimates} \label{sec3}

 Theorem 3.3 of \cite{KO06} gives a cluster-robust bound for cluster eigenvalue approximations in the conforming case.   We utilize this result by employing the conforming surface FEM defined in \eqref{e:eigenproblem_discrete_gamma} as an intermediate discrete problem.  We first use the results of \cite{KO06} to estimate $|\lambda_i - \Lambda^\gamma_{i}|$ in a cluster-robust fashion and then independently bound $|\Lambda^\gamma_{i}-\Lambda_{i}|$. Note that if $\lambda_i$ is a multiple eigenvalue so that $\lambda_{i-\underline{k}}=...=\lambda_i=...=\lambda_{i+\bar k}$, then our bounds also immediately apply to $|\lambda_i-\Lambda_{j}|$,  for $i-\underline{k} \le j \le i+\bar{k}$.  

Because our setting is non-conforming, we introduce two different Rayleigh quotients defined for $v\in \widetilde{\mathbb V}$:
$$
R_{\tilde a}(v)  := \frac{\tilde a(v,v)}{\tilde m(v,v)} \qquad \textrm{and} \qquad R_{\widetilde A}(v)  := \frac{\widetilde A(v,v)}{\widetilde M(v,v)}.
$$
We invoke the min-max approach to characterize the approximate eigenvalues  
\begin{equation}\label{FcEig}
\Lambda^\gamma_{j}  = \min_{\substack{\mathbb S\subset \widetilde{\mathbb V} \\ \dim(\mathbb S)=j+1}} \max_{V\in \mathbb S} R_{\tilde a}(V) \qquad \text{and} \qquad  \Lambda_{j} = \min_{\substack{\mathbb S\subset \widetilde{\mathbb V}\\ \dim(\mathbb S)=j+1}} \max_{V\in \mathbb S} R_{\widetilde A}(V).
\end{equation}
Notice that we do not restrict the Rayleigh quotients to functions with vanishing mean values.  Thus we consider subspaces of dimensions $\dim(S)=j+1$ rather than the usual $\dim(S)=j$. The extra dimension is the space of constant functions.   

The bound for $|\Lambda^\gamma_{j} - \Lambda_{j}|$ given in the following lemma shows that this difference is  only related to the geometric error  scaled by  the corresponding exact eigenvalue $\Lambda^\gamma_j$.
\begin{lemma}
\label{eig_compare}
For $i=1,...,\dim(\mathbb V)-1$, let $\Lambda^\gamma_{i}$ and $\Lambda_{i}$ be the discrete eigenvalues associated with the finite element method on $\gamma$ and $\Gamma$ respectively.
 Then, we have
\begin{equation}\label{eigcompare}
|\Lambda^\gamma_{i} - \Lambda_{i}| \lesssim \Lambda^\gamma_{i}h^{k+1}.
\end{equation}
\end{lemma}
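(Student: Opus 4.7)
The plan is to compare the two Rayleigh quotients $R_{\tilde a}$ and $R_{\widetilde A}$ pointwise on $\widetilde{\mathbb V}$ using the sharpened norm equivalence provided by \eqref{aka}--\eqref{mkm}, and then transport this comparison through the min-max characterization \eqref{FcEig}.

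First I would fix an arbitrary $V \in \widetilde{\mathbb V}$, square \eqref{aka} to obtain
\[
\widetilde A(V,V) \le (1+Ch^{k+1})^2\,\tilde a(V,V),
\]
and square the second inequality in \eqref{mkm} to obtain
\[
\widetilde M(V,V) \ge (1+Ch^{k+1})^{-2}\,\tilde m(V,V).
\]
Dividing, and choosing $h$ small enough so that $(1+Ch^{k+1})^4 \le 1 + C' h^{k+1}$, yields
\[
R_{\widetilde A}(V) \le (1+C' h^{k+1})\, R_{\tilde a}(V),
\]
and an entirely symmetric computation using the reverse halves of \eqref{aka} and \eqref{mkm} gives
\[
R_{\tilde a}(V) \le (1+C' h^{k+1})\, R_{\widetilde A}(V).
\]

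Next I would plug these pointwise comparisons into the min-max formulas \eqref{FcEig}. Taking the maximum over $V$ in any $(j+1)$-dimensional subspace $\mathbb S \subset \widetilde{\mathbb V}$, the constant $(1+C'h^{k+1})$ can be pulled outside, and subsequently the minimum over $\mathbb S$ commutes with this same multiplicative constant. This gives
\[
\Lambda_j \le (1+C'h^{k+1})\,\Lambda^\gamma_j \qquad \text{and} \qquad \Lambda^\gamma_j \le (1+C'h^{k+1})\,\Lambda_j.
\]
Subtracting and absorbing one factor into $\lesssim$, the first inequality gives $\Lambda_j - \Lambda^\gamma_j \lesssim \Lambda^\gamma_j h^{k+1}$, and the second gives $\Lambda^\gamma_j - \Lambda_j \lesssim \Lambda_j h^{k+1} \le (1+C'h^{k+1})\Lambda^\gamma_j h^{k+1} \lesssim \Lambda^\gamma_j h^{k+1}$, which combined yield \eqref{eigcompare}.

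No step here is a serious obstacle; the proof is essentially bookkeeping once one notices that the two discrete problems live on the same space $\widetilde{\mathbb V}$ and differ only through perturbations of the bilinear forms already controlled by Corollary \ref{c:geom}. The only delicate point is keeping track of the exponent on $(1+Ch^{k+1})$ (one gets a fourth power from squaring and dividing) and absorbing it into a single factor $(1 + C'h^{k+1})$ for $h$ small; this hidden smallness assumption on $h$ is already in force since the norm-equivalence corollary requires $\Gamma \subset \mathcal N$.
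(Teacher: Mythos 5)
Your proposal is correct and follows essentially the same route as the paper: compare the two Rayleigh quotients pointwise on $\widetilde{\mathbb V}$ via the sharpened norm equivalences \eqref{aka}--\eqref{mkm} (accumulating the factor $(1+Ch^{k+1})^4$), pass to the min-max characterization \eqref{FcEig}, and then symmetrize, bounding $\Lambda_i$ by $\Lambda_i^\gamma$ in the reverse direction to get both one-sided estimates in terms of $\Lambda_i^\gamma$. The only cosmetic difference is that you absorb $(1+Ch^{k+1})^4$ into a single $(1+C'h^{k+1})$ before subtracting, whereas the paper keeps the fourth power and passes directly to the $\lesssim$ statement.
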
	
\begin{proof}
We use the characterization \eqref{FcEig} and compare $R_a(.)$ and $R_{\ak}(.)$.
Using the finer norm equivalence properties \eqref{aka} and \eqref{mkm}, we have for $V\in \widetilde{\mathbb V}$
$$
R_{\widetilde A}(V) \leq \frac{(1+Ch^{k+1})^2 \tilde a(V,V)}{\tilde m(V,V)/(1+Ch^{k+1})^2} = (1+Ch^{k+1})^4R_{\tilde a}(V).
$$
Thus
\begin{align}
\begin{aligned}\label{e:LL}
\Lambda_{i} & \leq  \min_{\substack{\mathbb S\subset \mathbb V \\ \dim(\mathbb S)=i+1}} \max_{V\in \mathbb S} (1+Ch^{k+1})^4R_{\tilde a}(V)
=(1+Ch^{k+1})^4\Lambda^\gamma_{i},
\\ \Lambda_{i} -\Lambda^\gamma_{i} &  \lesssim \Lambda^\gamma_{i}h^{k+1}.
\end{aligned}
\end{align}
A similar argument gives $\Lambda_{i}^\gamma -\Lambda_{i}\lesssim \Lambda_i h^{k+1} \lesssim \Lambda_i^\gamma h^{k+1}$, where we used \eqref{e:LL} in the last step.
This implies \eqref{eigcompare}, as claimed.  
\end{proof}

We now translate Theorem 3.3 of \cite{KO06} into our notation in order to bound $|\lambda_i-\Lambda^\gamma_{j}|$ in a cluster-robust manner.  First, let ${\bG}^\gamma$ be the Ritz projection calculated with respect to $\tilde a(\cdot, \cdot)$.  That is, for $v\in H^1(\gamma)$,  ${\bG}^\gamma v \in \widetilde{\mathbb{V}}_\#$ satisfies 
$$
\tilde a(\bG^\gamma v, V) = \tilde a(v, V), \qquad \forall V \in \widetilde{\mathbb{V}}_\#.
$$  
Next, let $T: H^1_\#(\gamma) \rightarrow H^1_\#(\gamma)$ be the solution operator associated with the source problem (restricted to $H^1_\#(\gamma)$)
$$
\tilde a(Tf, v) =\tilde m(f,v), \qquad \forall v \in H^1_\#(\gamma).
$$
Finally, let $\bZ^\gamma_n$ be the $\tilde a$-orthogonal projection onto the space spanned by\\
 $\{ U^\gamma_i\}_{i=1,.., n-1}$, that is, onto the first $n-1$ discrete eigenfunctions calculated with respect to $\tilde a$ and $\tilde m$, see \eqref{e:eigenproblem_discrete_gamma}.  Theorem 3.3 of \cite{KO06} provides the following estimates.  

\begin{lemma}[Theorem 3.3 of \cite{KO06}]
Let $j \in J$, and assume that 
\begin{align}
\min_{i=1,..., n-1} |\Lambda_{i}^\gamma-\lambda_j| \neq 0.
\end{align}
Then,
\begin{equation*}
0 \le \frac{\Lambda^\gamma_{j} - \lambda_j}{\lambda_j}  \le \left ( 1 + \max_{i=1,.., n-1} \frac{(\Lambda^\gamma_{i})^2 \lambda_j^2}{|\Lambda^\gamma_{i}-\lambda_j|^2} \sup_{\substack{v \in H^1_\# (\gamma)\\ \| v \|_{\tilde a}=1}} \|(I-{\bG}^\gamma) T \bZ^\gamma_n v \|_{\tilde a}^2 \right ) 
\end{equation*}
$$
\times\sup_{\substack{w \in \textrm{span}(u_k \ : \ k \in J)\\ \|w\|_{\tilde a}=1} } \|(I-\bG^\gamma)w\|_{\tilde a}^2.
$$
\end{lemma}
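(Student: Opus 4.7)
This statement is labeled as a direct restatement of Theorem~3.3 of \cite{KO06}, so my plan is to verify that our setup matches the abstract framework there and to invoke the result rather than reprove it from scratch.

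Concretely, I would check the following correspondences with \cite{KO06}. First, $(\tilde a, \tilde m)$ is a symmetric, coercive bilinear pair on the Hilbert space $H^1_\#(\gamma)$ with a compact self-adjoint solution operator $T: H^1_\#(\gamma)\to H^1_\#(\gamma)$, so the abstract spectral framework there applies. Second, the discretization \eqref{e:eigenproblem_discrete_gamma} is \emph{conforming} since $\widetilde{\mathbb V}_\# \subset H^1_\#(\gamma)$ by construction, which is the only reason the theorem of \cite{KO06} applies here as opposed to the non-conforming setting we ultimately care about. Third, $\bG^\gamma$ and $\bZ^\gamma_n$ are, respectively, the exact Ritz projection and the $\tilde a$-orthogonal projection onto the span of the first $n-1$ discrete eigenfunctions, matching exactly the operators that appear in \cite[Thm.~3.3]{KO06}. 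Once these identifications are made, the bound transfers verbatim.

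For intuition I would also sketch why the bound has the stated form. The lower bound $0\leq \Lambda^\gamma_j-\lambda_j$ is classical min-max monotonicity for conforming Galerkin. For the upper bound, one feeds into the min-max characterization \eqref{FcEig} of $\Lambda^\gamma_j$ the trial subspace obtained by taking Ritz projections of the cluster eigenfunctions $u_k$, $k\in J$, augmented by suitable vectors from the lower discrete spectral subspace to reach the correct dimension. Expanding each $\bG^\gamma u_k$ in the discrete eigenbasis and splitting its projection into the part inside $\mathrm{span}\{U^\gamma_i : i<n\}$ and the part orthogonal to it produces two contributions: the ``in-cluster'' piece yields the final factor $\sup_w \|(I-\bG^\gamma) w\|_{\tilde a}^2$, while the ``below-cluster'' piece, captured through $\bZ^\gamma_n$ and a resolvent-type weight of the form $\Lambda^\gamma_i \lambda_j/(\Lambda^\gamma_i - \lambda_j)$ that comes from rewriting $\bG^\gamma$ through the solution operator $T$, yields the maximum in the first parenthesized factor.

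The main obstacle, and the reason the result is nontrivial, is the \emph{cluster robustness} of the constant: the denominators only involve $|\Lambda^\gamma_i - \lambda_j|$ for $i<n$, i.e.\ gaps between the cluster and the strictly lower portion of the spectrum, and never gaps within $J$ itself or between $J$ and higher eigenvalues. Verifying that this feature survives intact in the translation from the abstract language of \cite[Thm.~3.3]{KO06} to our notation—in particular that the quantifier over $i$ is correctly restricted to $i\leq n-1$ and that the role of $\bZ^\gamma_n$ is precisely the projection onto the below-cluster discrete invariant subspace—is the step that requires the most care.
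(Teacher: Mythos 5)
Your proposal matches the paper's treatment: the paper itself offers no proof of this lemma, presenting it purely as a translation of \cite[Theorem 3.3]{KO06} to the conforming surface FEM setting, so identifying the correspondences (conforming subspace $\widetilde{\mathbb V}_\# \subset H^1_\#(\gamma)$, Ritz projection $\bG^\gamma$, below-cluster spectral projection $\bZ^\gamma_n$, solution operator $T$) and invoking the cited theorem is exactly the right and intended argument. Your added sketch of the min-max/trial-subspace mechanism and the cluster-robustness caveat is sound supplementary intuition but is not part of the paper's own (nonexistent) proof.
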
  

We now provide some interpretation of this result.  
Because $\bG^\gamma$ is the Ritz projection defined with respect to $\tilde a(\cdot, \cdot)$, we have
\begin{align}
\|(I-\bG^\gamma) v\|_{\tilde a} = \inf_{V \in \widetilde{\mathbb{V}}_\#} \|v-V\|_{\tilde a}. 
\end{align}
That is, the term $\sup_{w \in \textrm{span}(u_k \ : \ k \in J), \|w\|_{\tilde a}=1 } \|(I-\bG^\gamma)w\|_{\tilde a}^2$ measures approximability in the energy norm of the eigenfunctions in the targeted cluster $ \textrm{span}(u_k \ : \ k \in J)$ by the finite element space.  

Next, we unravel the term $\|(I-{\bG}^\gamma) T \bZ^\gamma_n v \|_{\tilde a}$.  For $v \in H^1_\#(\gamma)$, we have $\bZ^\gamma_n v \in \widetilde{\mathbb V}_\# \subset H^1_\#(\gamma)$.   Because $\gamma$ is assumed to be smooth, a standard shift theorem guarantees that for $f:=\bZ^\gamma_n v \in H^1_\#(\gamma)$, $Tf \in H^3(\gamma) \cap H^1_\#(\gamma)$ and $\|Tf\|_{H^{3}(\gamma)} \lesssim \|f\|_{H^1(\gamma)}$.  
Thus, $T \bZ^\gamma_n v \in H^3(\gamma)$, and $\|T {\bZ}^\gamma_n v\|_{H^3(\gamma)} \lesssim \| v \|_{H^1(\gamma)}$. 
Therefore, $\|(I-\bG^\gamma) T \bZ^\gamma_n v \|_{\tilde a}$ measures the Ritz projection error of $v \in H^3(\gamma)$ in the energy norm, and so (cf. \cite{D09})
\begin{align}
\sup_{v \in H^1_\# (\gamma), \ \| v \|_{\tilde a}=1} \|(I-{\bG}^\gamma) T \bZ^\gamma_n v \|_{\tilde a} \lesssim h^{\min\{2,r\}}.
\end{align}
Combining the previous two lemmas with these observations yields the following.

\begin{theorem}[Cluster robust estimates]
Let $j \in J$, and assume in addition that $\min_{i=1,..., n-1} |\Lambda^\gamma_i-\lambda_j| \neq 0$.  Then
\begin{align}
\label{eig_est}
\begin{aligned}
|\lambda_j - \Lambda_{j}|    &\lesssim \Lambda^\gamma_j \left (1 + C h^{\min\{2r, 4\}} \max_{i=1,.., n-1} \frac{(\Lambda^\gamma_i)^2 \lambda_j^2}{|\Lambda^\gamma_{i}-\lambda_j|^2} \right) \\
&\times\sup_{\substack{w  \in \textrm{span}(u_k \ : \ k \in J)\\ \|w\|_{\tilde a}=1}} \inf_{V \in \widetilde{\mathbb V}_\#}\|w-V\|_{\tilde a}^2 + C h^{k+1} \Lambda^\gamma_j.
\end{aligned}
\end{align}
\end{theorem}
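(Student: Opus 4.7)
The plan is to use a triangle inequality that introduces the conforming parametric method on $\gamma$ as an intermediate quantity:
\begin{equation*}
|\lambda_j - \Lambda_{j}| \le |\lambda_j - \Lambda^\gamma_{j}| + |\Lambda^\gamma_{j} - \Lambda_{j}|.
\end{equation*}
The second term is the geometric consistency contribution and is already controlled by Lemma~\ref{eig_compare}, which yields
\begin{equation*}
|\Lambda^\gamma_{j} - \Lambda_{j}| \lesssim \Lambda^\gamma_j h^{k+1}.
\end{equation*}
This directly produces the additive $C h^{k+1}\Lambda^\gamma_j$ term appearing on the right hand side of the theorem.

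The bulk of the work is to handle the first term $|\lambda_j - \Lambda^\gamma_{j}|$ using the cited Theorem~3.3 of \cite{KO06}. The hypothesis $\min_{i=1,\dots,n-1}|\Lambda^\gamma_i - \lambda_j| \ne 0$ is exactly what is required to invoke that lemma. Applying it gives a bound on $(\Lambda^\gamma_j - \lambda_j)/\lambda_j$ (note $\Lambda^\gamma_j \ge \lambda_j$ from the min-max characterization, so no absolute value is needed there). Multiplying through by $\lambda_j$ produces
\begin{equation*}
|\lambda_j - \Lambda^\gamma_j| \le \lambda_j \Bigl(1 + \max_{i=1,\dots,n-1} \tfrac{(\Lambda^\gamma_i)^2 \lambda_j^2}{|\Lambda^\gamma_i - \lambda_j|^2}\, S^2\Bigr)\, E^2,
\end{equation*}
where $S := \sup_{v} \|(I - \bG^\gamma) T \bZ^\gamma_n v\|_{\tilde a}$ and $E^2 := \sup_w \|(I - \bG^\gamma) w\|_{\tilde a}^2$ with $w$ ranging over unit-energy elements of $\mathrm{span}(u_k : k \in J)$.

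Two further reductions then deliver the stated form. First, since $\bG^\gamma$ is the $\tilde a$-Ritz projection onto $\widetilde{\mathbb V}_\#$, best approximation gives
\begin{equation*}
\|(I - \bG^\gamma) w\|_{\tilde a} = \inf_{V \in \widetilde{\mathbb V}_\#} \|w - V\|_{\tilde a},
\end{equation*}
which rewrites the $E^2$ factor in the desired approximability form. Second, by the discussion preceding the theorem, the shift regularity $\|T f\|_{H^3(\gamma)} \lesssim \|f\|_{H^1(\gamma)}$ together with standard Ritz projection estimates of order $h^{\min\{2,r\}}$ yields $S \lesssim h^{\min\{2,r\}}$ and hence $S^2 \lesssim h^{\min\{4,2r\}}$. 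Finally, replacing the leading $\lambda_j$ by $\Lambda^\gamma_j$ (using $\lambda_j \le \Lambda^\gamma_j$) unifies the prefactor with the one coming from the geometric term, and combining the two pieces gives the stated bound.

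The only mildly delicate point is tracking that the $\lambda_j$ factor appearing as a prefactor in the Knyazev-Osborn bound can be absorbed into the $\Lambda^\gamma_j$ that multiplies the whole cluster expression; this relies solely on the conforming min-max inequality $\lambda_j \le \Lambda^\gamma_j$. Beyond this, the argument is essentially an accounting exercise combining two already-stated lemmas, so I do not anticipate any substantive obstacles.
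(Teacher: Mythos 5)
Your proposal is correct and follows essentially the same route the paper takes: a triangle inequality through the intermediate conforming discrete eigenvalue $\Lambda^\gamma_j$, with Lemma~\ref{eig_compare} giving the $C h^{k+1}\Lambda^\gamma_j$ geometric term and Theorem~3.3 of \cite{KO06} (combined with the best-approximation identity for $\bG^\gamma$ and the shift-regularity bound $S\lesssim h^{\min\{2,r\}}$) giving the remaining factor, then absorbing $\lambda_j\le\Lambda^\gamma_j$ into the common prefactor. The paper simply states that the theorem results from ``combining the previous two lemmas with these observations,'' and your write-up is exactly that assembly.
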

\begin{remark}[Asymptotic nature of eigenvalue estimates]\label{r:mu}
{\it The constant\\ 
	$\max_{i=1,..., n-1} \frac{\Lambda^\gamma_i \lambda_j}{|\Lambda^\gamma_i-\lambda_j|}$ is not entirely a priori and could be undefined if by coincidence $\Lambda^\gamma_{i}-\lambda_j=0$ for some $i<n$.  Because this constant arises from a conforming finite element method, however, its properties are well understood; cf. \cite[Section 3.2]{KO06} for a detailed discussion.  In short, convergence of the eigenvalues $\Lambda^\gamma_{i} \rightarrow \lambda_i$ is guaranteed as $h \rightarrow 0$, so $\max_{i=1,..., n-1} \frac{\Lambda^\gamma_{i} \lambda_j}{|\Lambda^\gamma_{i}-\lambda_j|} \rightarrow \frac{\lambda_{n-1} \lambda_j}{|\lambda_{n-1}-\lambda_j|}$.  Because $j \ge n$ and we have assumed separation property \eqref{a:separation}, namely  $\lambda_n> \lambda_{n-1}$, this quantity is well-defined.  

In the following section we prove eigenfunction error estimates under the assumption that the quantity $\mu(J)= \max_{\ell \in J} \max_{j \notin J} \left | \frac{\lambda_\ell}{\Lambda_{h,j}-\lambda_\ell} \right |$ defined in \eqref{d:mu} above is finite.  The observation in the preceding paragraph and \eqref{eig_est} guarantee the existence of $h_0$ such that $\mu(J)<\infty$ for all $h \le h_0$.  Thus there exists $h_0$ such that for all $h \le h_0$ the discrete eigenvalue cluster respects the separation of the continuous cluster from the remainder of the spectrum in the sense that $\Lambda_{n}>\lambda_{n-1}$ and $\Lambda_{n+N} < \lambda_{n+N+1}$.  
}
\end{remark}
\begin{remark}[Constant in \eqref{eig_est}]
{\it The spectrally dependent constants in \eqref{eig_est} are expressed with respect to the intermediate discrete eigenvalues $\Lambda^\gamma_{j}$ instead of with respect to the computed discrete eigenvalues $\Lambda_{j}$.  It is not difficult to essentially replace $\Lambda^\gamma_{j}$ by $\Lambda_{j}$ at least for $h$ sufficiently small by noting that Lemma \ref{eig_compare} may be rewritten as $|\Lambda_{j}-\Lambda^\gamma_{j}| \lesssim \Lambda_{j} h^{k+1}$.  We do not pursue this change here.}
\end{remark}


\section{Eigenfunction Estimates} \label{sec4}
\subsection{$L_2$ Estimate}

We start by bounding the difference between the Galerkin projection $\bG$ of an exact eigenfunction and its projection to the discrete invariant space. 
It is instrumental for deriving  $L^2$ and energy bounds (Theorems~\ref{t:L2} and \ref{t:energy}).

\begin{lemma}\label{l:L2}
Let $\{\lambda_j\}_{j\in J}$ be an exact eigenvalue cluster satisfying the separation assumption \eqref{a:separation}.  Let $\{\Lambda_{j}\}_{j=1}^{\dim (\mathbb{V}_\#)}$ be the set of approximate FEM eigenvalues satisfying $\mu(J)<\infty$, where $\mu(J)$ is defined in \eqref{d:mu}.
Fix $i \in J$ and  let $u_i \in H^1_\#(\gamma)$ be any eigenfunction associated with $\lambda_i$.
Then for any $\alpha \in \mathbb{R}$, there holds
\begin{equation}\label{L2L}
\|\bG u_i - \bZ  u_i\|_{\widetilde M} \lesssim \left(1 +\mu(J)\right)(\| u_i- \bG u_i-\alpha\|_{\widetilde M} + h^{k+1}\|u_i\|_{\widetilde M}).
\end{equation}
\end{lemma}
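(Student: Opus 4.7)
The plan is to use the spectral decomposition of $w := \bG u_i - \bZ u_i$ in the discrete eigenbasis, derive a residual identity linking it to the continuous eigenvalue equation, and then test it against a carefully chosen discrete function. Since $\bZ = \bP \circ \bG$ by \eqref{e:bZ}, $\bZ u_i$ is the $\widetilde M$-orthogonal projection of $\bG u_i$ onto $\W_\#$, so with $\beta_j := \widetilde M(\bG u_i, \tilde U_j)$ we have
\begin{equation*}
w = \sum_{j \notin J} \beta_j \, \tilde U_j \qquad \text{and} \qquad \|w\|_{\widetilde M}^2 = \sum_{j \notin J} \beta_j^2.
\end{equation*}

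First I would establish the residual identity, valid for every $V \in \mathbb V_\#$ and any $\alpha \in \mathbb{R}$,
\begin{equation*}
\widetilde A(\bG u_i, V) - \lambda_i \widetilde M(\bG u_i, V) = \lambda_i (\tilde m - \widetilde M)(u_i, V) + \lambda_i \widetilde M(u_i - \bG u_i - \alpha, V) + (\widetilde A - \tilde a)(u_i, V),
\end{equation*}
by combining the defining property of $\bG$, the continuous eigenvalue equation $\tilde a(u_i, V) = \lambda_i \tilde m(u_i, V)$ (which extends from $H^1_\#(\gamma)$ to $H^1(\gamma)$ since $u_i$ has $\tilde m$-mean zero), and the identity $\widetilde M(\alpha, V) = 0$ coming from the $\widetilde M$-mean-zero property of lifts of elements of $\mathbb V_\#$.

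Next I would test this identity with $V := \sum_{j \notin J} \frac{\beta_j}{\Lambda_j - \lambda_i} \tilde U_j \in \mathbb V_\#$. Using $\widetilde A(\bG u_i, \tilde U_j) = \Lambda_j \beta_j$ (Ritz combined with the discrete eigenvalue equation), the left-hand side collapses to $\sum_{j \notin J} \beta_j^2 = \|w\|_{\widetilde M}^2$. The definition \eqref{d:mu} of $\mu(J)$ immediately gives $\|V\|_{\widetilde M} \le \frac{\mu(J)}{\lambda_i} \|w\|_{\widetilde M}$, so Cauchy-Schwarz, Corollary~\ref{c:geom}, the eigenvalue identity $\|u_i\|_{\tilde a} = \sqrt{\lambda_i}\|u_i\|_{\tilde m}$, and the norm equivalences \eqref{aka}-\eqref{mkm} turn the first two terms on the right into $h^{k+1} \mu(J) \|u_i\|_{\tilde m} \|w\|_{\widetilde M}$ and $\mu(J) \|u_i - \bG u_i - \alpha\|_{\widetilde M} \|w\|_{\widetilde M}$ respectively.

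The main obstacle is the third term. Its natural bound uses the energy norm, $|(\widetilde A - \tilde a)(u_i, V)| \lesssim h^{k+1} \sqrt{\lambda_i}\, \|u_i\|_{\tilde m} \|V\|_{\widetilde A}$, so I must control $\|V\|_{\widetilde A}^2 = \sum_{j \notin J} \frac{\Lambda_j \beta_j^2}{(\Lambda_j - \lambda_i)^2}$ without letting the growth of $\Lambda_j$ spoil the estimate. The saving algebraic identity is
\begin{equation*}
\frac{\Lambda_j}{(\Lambda_j - \lambda_i)^2} = \frac{1}{\Lambda_j - \lambda_i}\left(1 + \frac{\lambda_i}{\Lambda_j - \lambda_i}\right),
\end{equation*}
which, combined with \eqref{d:mu}, yields $\|V\|_{\widetilde A}^2 \lesssim \frac{(1+\mu(J))\mu(J)}{\lambda_i} \|w\|_{\widetilde M}^2$. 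Substituting this back gives a third-term bound of $h^{k+1}(1+\mu(J)) \|u_i\|_{\tilde m} \|w\|_{\widetilde M}$, and dividing the resulting inequality by $\|w\|_{\widetilde M}$ delivers \eqref{L2L} after replacing $\|u_i\|_{\tilde m}$ by $\|u_i\|_{\widetilde M}$ via \eqref{mkm}.
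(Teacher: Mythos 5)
Your proof is correct, and it reproduces the paper's core architecture: decompose $w := \bG u_i - \bZ u_i$ in the discrete eigenbasis as $w = \sum_{j \notin J}\beta_j U_j$, derive a residual identity from the Ritz/eigenvalue equations, and test it against $V = \sum_{j\notin J}\frac{\beta_j}{\Lambda_j-\lambda_i}U_j$ so that the left side collapses to $\|w\|_{\widetilde M}^2$; the three terms on the right then split identically into a contribution of $\|u_i-\bG u_i-\alpha\|_{\widetilde M}$ and two geometric terms. Where you genuinely diverge is in closing the estimate on $(\widetilde A-\tilde a)(u_i,V)$. The paper bounds $\|V\|_{\widetilde A}$ by $\mu(J)\|W\|_{\widetilde A}/\lambda_i$, invokes Young's inequality, and then needs a separate auxiliary step (Step 4, equation \eqref{vak}) to estimate $\|W\|_{\widetilde A}$ in terms of $\|W\|_{\widetilde M}$ using Galerkin orthogonality $\widetilde A(W,W) = \widetilde A(u_i,W)$; the three pieces are then recombined in Step 5. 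You instead observe the algebraic identity $\frac{\Lambda_j}{(\Lambda_j-\lambda_i)^2} = \frac{1}{\Lambda_j-\lambda_i}(1 + \frac{\lambda_i}{\Lambda_j-\lambda_i})$, which controls $\|V\|_{\widetilde A}^2 = \sum_{j\notin J}\frac{\Lambda_j\beta_j^2}{(\Lambda_j-\lambda_i)^2}$ directly by $\frac{\mu(J)(1+\mu(J))}{\lambda_i}\|w\|_{\widetilde M}^2$, so no detour through $\|W\|_{\widetilde A}$ and no Young inequality is needed. Your route is more compact and gives the lemma in a single pass; the paper's longer route has the ancillary benefit that the intermediate estimate \eqref{vak} it establishes is reused verbatim in the proof of the energy-norm Theorem~\ref{t:energy}, so the extra work is amortized. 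Both routes yield the same final constant $(1+\mu(J))$.
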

\begin{proof} Our proof essentially involves accounting for geometric variational crimes in an argument given for the conforming case in \cite{CaGe11} (cf. \cite{Gallistl}).  

\step{1} Recall that $\{U_j\}_{j=1}^{\dim(\mathbb V_\#)} \in \mathbb V_\#$ denotes the collection of discrete $\widetilde M$-orthonormal eigenfunctions associated with $\{ \Lambda_j\}_{j=1}^{\dim(\mathbb V_\#)}$.
For $l \in \{1,...,\dim(\mathbb V_\#)\} \setminus J$, $U_{l} \in \text{Ran} (I -\bP) \subset \mathbb V_\#$ is $\widetilde M$-orthogonal to the approximate invariant space $\mathbb W_\# =\textrm{span}( U_j \ : \ j \in J \}$. 
According to relation \eqref{e:bZ}, we then have  $\widetilde M(\bZ  u_i,U_{l})  = \widetilde M(\bP \bG u_i,U_{l}) = 0,$
which implies
\begin{equation}\label{e:orth}
\widetilde M(\bG u_i - \bZ u_i, U_{l})  = \widetilde M(\bG u_i, U_{l}).
\end{equation}
In addition, $W := \bG u_i - \bZ u_i = (I-\bP)\bG u_i$ can be written as 
$
W = \sum_{\substack{l=1\\ l \not \in J}}^{\dim(\mathbb V_\#)} \beta_l U_{l}
$
for some $\beta_l \in \mathbb R$, so that, together with \eqref{e:orth}, we have
\begin{equation}\label{e:start}
\|W\|_{\widetilde M}^2 = \widetilde M(W,W) = \widetilde M\left(\bG u_i, \sum_{\substack{l=1\\l\not \in J}}^{\dim(\mathbb V_\#)} \beta_l U_{l}\right).
\end{equation}

\step{2} We now proceed by deriving estimates for $\widetilde M\left(\bG u_i, U_{l}\right)$, $l \not \in J$.
Since $U_{l}$ is an eigenfunction of the approximate eigenvalue problem associated with $\Lambda_l$, we have
$$
\Lambda_{l}\widetilde M(V, U_{l}) = \Lambda_l \widetilde M(U_l,V) = \widetilde A(U_l,V) = \widetilde A(V,U_l) ,\qquad \forall V\in\mathbb V_\#. 
$$
Choosing $V = \bG u_i$  gives
$$
\Lambda_{l}\widetilde M(\bG u_i, U_{l}) = \widetilde A(\bG u_i, U_{l}) 
= \widetilde A (u_i,U_{l}) = \tilde a(u_i,U_{l}) + (\widetilde A-\tilde a)(u_i,U_{l}).
$$
We now use the fact that $u_i$ is an eigenfunction of the exact problem to get
\begin{align*}
\Lambda_{l}\widetilde M(\bG u_i, U_{l})  &= \lambda_i \tilde m(u_i,U_{l}) + (\widetilde A-\tilde a)(u_i,U_{l}) \\
& = \lambda_i\widetilde M(u_i,U_{l}) + \lambda_i (\tilde m-\widetilde M)(u_i,U_{l}) + (\widetilde A-\tilde a)(u_i,U_{l}).
\end{align*}

Subtracting $\lambda_i\widetilde M(\bG u_i,U_{l})$ from both sides yields
$$
(\Lambda_{l}-\lambda_i)\widetilde M(\bG u_i, U_{l}) = \lambda_i\widetilde M(u_i-\bG u_i,U_{l}) + \lambda_i (\tilde m-\widetilde M)(u_i,U_{l}) + (\widetilde A-\tilde a)(u_i,U_{l}),
$$
or
\begin{align*}
\widetilde M(\bG u_i, U_{l}) &= \frac{1}{\Lambda_{l}-\lambda_i}\left [ \lambda_i \widetilde M(u_i-\bG u_i,U_{l}) + \lambda_i (\tilde m-\widetilde M)(u_i,U_{l}) + (\widetilde A-\tilde a)(u_i,U_{l}) \right].
\end{align*}

\step{3} Returning to \eqref{e:start}, we obtain
\begin{align*}
\|W\|_{\widetilde M}^2 = \widetilde M(W,W) &=  \widetilde M\left(u_i-\bG u_i -\alpha,\sum_{\substack{l=1\\l\not \in J}}^{\dim(\mathbb V_\#)} \frac{\lambda_i}{\Lambda_{l}-\lambda_i}\beta_l U_{l}\right) \\
&+  \left [(\tilde m-\widetilde M) + \frac{1}{\lambda_i} (\widetilde A-\tilde a) \right ]\left(u_i,\sum_{\substack{l=1\\ l \not \in J}}^{\dim(\mathbb V_\#)} \frac{\lambda_i}{\Lambda_{l}-\lambda_i}\beta_l U_{l}\right) 
\end{align*}
where we used $\widetilde M(U_l,1)=0$ to incorporate $\alpha \in \mathbb R$ into the estimate.
To continue further, we use the orthogonality property of the discrete eigenfunctions to obtain
$$
\left \| \sum_{\substack{l=1\\ l \not \in J}}^{\dim(\mathbb V_\#)} \frac{\lambda_i}{\Lambda_{l}-\lambda_i}\beta_l U_{l} \right \|_{\widetilde M}^2 = \sum_{\substack{l=1\\ l \not \in J}}^{\dim(\mathbb V_\#)} \left(\frac{\lambda_i}{\Lambda_{l}-\lambda_i}\right)^2\beta_l^2 \| U_l \|_{\widetilde M}^2\leq  \mu(J) \| W \|_{\widetilde M}^2
$$
and similarly $
\left\| \sum_{\substack{l=1\\ l \not \in J}}^{\dim(\mathbb V_\#)} \frac{\lambda_i}{\Lambda_{l}-\lambda_i}\beta_l U_{l} \right\|_{\widetilde A}^2 \leq   \mu(J) \| W \|_{\widetilde A}^2$ since $\tilde A(U_l,U_k) = \Lambda_l \widetilde M(U_l,U_k)$.  
Thus the geometric error estimates (Corollary~\ref{c:geom}) and a Young inequality imply
\begin{align}
\begin{aligned}
\|W\|_{\widetilde M}^2 &\leq  \mu(J)\|u_i-\bG u_i-\alpha\|_{\widetilde M}\|W\|_{\widetilde M}+  C h^{k+1}\mu (J)\|u_i\|_{\widetilde M}\|W\|_{\widetilde M}\\
&+ C h^{2k+2}\frac{\mu(J)^2}{\lambda_i}\|u_i\|_{\widetilde A}^2 
+ \frac{1}{4\lambda_i}\|W\|^2_{\widetilde A}.
\end{aligned}
\label{vmk2}
\end{align}

\step{4} To bound $\|W\|_{\tilde A}$, we recall that $\bP\circ \bG$ and $\bG$ are the $\widetilde A(\cdot, \cdot)$ projections onto $\mathbb{W}_\#$ and $\mathbb{V}_\#$, respectively, and that ${\bP}$ is the $L_2$ projection onto $\mathbb{W}_\#$.  Thus 
\begin{align*}
\|W\|_{\widetilde A}^2 &= \widetilde A(W,W) = \widetilde A((I-\bP)\bG u_i,(I-\bP)\bG u_i) = \widetilde A(\bG u_i, (I-\bP)\bG u_i) \\
&= \widetilde A(u_i,(I-\bP)\bG u_i)= \widetilde A(u_i,W).
\end{align*}

To isolate the geometric error, we rewrite for any $\alpha \in \mathbb R$ the right hand side of the above equation as
\begin{align*}
&\tilde a(u_i,W) + (\widetilde A-\tilde a)(u_i,W) = \lambda_i \tilde m(u_i,W) + (\widetilde A-\tilde a)(u_i,W) \\
& = \lambda_i (\tilde m-\widetilde M)(u_i,W) + \lambda_i \widetilde M(u_i-\bG u_i,W) + \lambda_i \widetilde M(\bG u_i-\bZ u_i,W) + (\widetilde A-\tilde a)(u_i,W)\\
& = \lambda_i (\tilde m-\widetilde M)(u_i,W) + \lambda_i \widetilde M(u_i-\bG u_i-\alpha,W) + \lambda_i \widetilde M(W,W) + (\widetilde A-\tilde a)(u_i,W),
\end{align*}
upon invoking the orthogonality relations \eqref{e:orth} and $\widetilde M(W,1)=0$. 
We take advantage again of the geometric error estimates (Corollary~\ref{c:geom}) to arrive at
\begin{align}
\begin{aligned}
\|W\|_{\widetilde A}^2 &\leq \lambda_i C h^{k+1}\|u_i\|_{\widetilde M}\|W\|_{\widetilde M} + \lambda_i \|u_i-\bG u_i-\alpha \|_{\widetilde M} \|W\|_{\widetilde M} +\lambda_i\|W\|_{\widetilde M}^2\\
&+ C h^{k+1}\|u_i\|_{\widetilde A}\|W\|_{\widetilde A}.
\end{aligned}
\label{vak0}
\end{align}

Now, noting that  $\|u_i\|_{\widetilde A}\lesssim \|u_i\|_{\tilde a}= \sqrt{\lambda_i}\|u_i\|_{\tilde m}$ by \eqref{e:equiv_rough}   and using Young's inequality to absorb the last term by the left hand side gives
\begin{align}
\begin{aligned}
\|W\|_{\widetilde A}^2 &\leq  C h^{k+1}\lambda_i\|u_i\|_{\widetilde M}\|W\|_{\widetilde M} + 2 \lambda_i\|u_i-\bG u_i-\alpha\|_{\widetilde M} \|W\|_{\widetilde M} +2\lambda_i \|W\|_{\widetilde M}^2\\
& + C\lambda_i h^{2k+2} \|u_i\|_{\widetilde M}^2.
\end{aligned}
\label{vak}
\end{align}

\step{5} Using \eqref{vak} in \eqref{vmk2} gives
\begin{align*}
\|W\|_{\widetilde M}^2 & \leq \left(\frac{1}{2} +\mu(J)\right)\|u_i-\bG u_i-\alpha\|_{\widetilde M}\|W\|_{\widetilde M}
+  Ch^{k+1}\left(1+\mu(J)\right)\|u_i\|_{\widetilde M}\|W\|_{\widetilde M}\\
& 
+ Ch^{2k+2}\left(1+\mu(J)^2\right)\|u_i\|_{\widetilde M}^2 + \frac{1}{2}\|W\|_{\widetilde M}^2.
\end{align*}
We apply Young's inequality again to arrive at
\begin{align*}
\|W\|_{\widetilde M}^2 &\lesssim (1 +\mu(J))^2 \left [ \|u_i-\bG u_i-\alpha\|_{\widetilde M}^2
+  h^{2k+2} \|u_i\|_{\widetilde M}^2+ h^{2k+2} \|u_i\|_{\mk}^2 \right ],
\end{align*}
which yields the desired result upon taking a square root.
\end{proof}

\begin{theorem}[$L^2$ error estimate]\label{t:L2}
Let $\{\lambda_j\}_{j\in J}$ be an exact eigenvalue cluster satisfying the separation assumption \eqref{a:separation}. Let $\{\Lambda_{j}\}_{j=1}^{\dim (\mathbb{V}_\#)}$ be the set of approximate FEM eigenvalues satisfying $\mu(J)<\infty$.
We fix $i \in J$ and denote by $u_i \in H^1_\#(\gamma)$ any eigenfunction associated with $\lambda_i$.
Then for any $\alpha \in \mathbb{R}$, the following bound holds:
\begin{align}
\label{l2_bound}
\begin{aligned}
\|u_i - \bP u_i-\alpha\|_{\widetilde M} & \leq \|u_i - \bZ u_i-\alpha \|_{\widetilde M} 
\\ &\lesssim (1 +\mu(J))\|u_i-\bG u_i-\alpha\|_{\widetilde M}+ \left(1 +\mu(J)\right)\|u_i\|_{\widetilde M}h^{k+1}.
\end{aligned}
\end{align}
\end{theorem}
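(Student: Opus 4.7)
The plan is to deduce Theorem~\ref{t:L2} as a fairly direct corollary of Lemma~\ref{l:L2}: the first inequality is an orthogonality/best-approximation argument, and the second is a triangle inequality together with the $L^2$ bound on $\bG u_i-\bZ u_i$ already in hand. The real work has already been done in Lemma~\ref{l:L2}; here I only need to assemble the pieces.

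First I would establish the leftmost inequality $\|u_i - \bP u_i-\alpha\|_{\widetilde M} \leq \|u_i - \bZ u_i-\alpha \|_{\widetilde M}$ via $\widetilde M$-orthogonality. The key observation is that $\bP$ is the $\widetilde M$-orthogonal projection onto $\W_\#$, and $\W_\# \subset \mathbb V_\#$ consists of functions with $\widetilde M(V,1)=M(V,1)=0$. Therefore any constant $\alpha$ is itself $\widetilde M$-orthogonal to $\W_\#$ (since $\widetilde M(\alpha,V)=\alpha M(1,V)=0$), while $u_i-\bP u_i \perp_{\widetilde M} \W_\#$ by definition of $\bP$. Summing, $u_i-\bP u_i -\alpha$ is $\widetilde M$-orthogonal to $\W_\#$. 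Since $\bP u_i - \bZ u_i \in \W_\#$, a Pythagorean decomposition
\begin{equation*}
\|u_i-\bZ u_i-\alpha\|_{\widetilde M}^2 = \|u_i-\bP u_i-\alpha\|_{\widetilde M}^2 + \|\bP u_i - \bZ u_i\|_{\widetilde M}^2
\end{equation*}
yields the desired inequality.

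Next I would derive the main bound. By the triangle inequality,
\begin{equation*}
\|u_i-\bZ u_i-\alpha\|_{\widetilde M} \le \|u_i-\bG u_i-\alpha\|_{\widetilde M} + \|\bG u_i - \bZ u_i\|_{\widetilde M}.
\end{equation*}
Lemma~\ref{l:L2} directly controls the second term by $(1+\mu(J))(\|u_i-\bG u_i-\alpha\|_{\widetilde M} + h^{k+1}\|u_i\|_{\widetilde M})$, and combining these two estimates gives the claimed bound after collecting the $(1+\mu(J))$ factors.

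Because both steps are essentially bookkeeping given Lemma~\ref{l:L2}, there is no real obstacle. The only subtlety worth highlighting in the writeup is the justification that $\bP \alpha = 0$ for a constant $\alpha$, which in turn relies on the fact that the discrete invariant space $\W_\#$ is built from zero-$\widetilde M$-mean functions — this is precisely what allows the free parameter $\alpha\in\mathbb{R}$ to be carried through the argument without interfering with the orthogonal-projection structure used to obtain the best-approximation inequality.
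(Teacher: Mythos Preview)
Your proposal is correct and follows essentially the same approach as the paper: both argue that $\bP\alpha=0$ so that $\bP$ applied to $u_i-\alpha$ gives the $\widetilde M$-best approximation in $\W_\#$, yielding the first inequality, and then use the triangle inequality together with Lemma~\ref{l:L2} for the second. Your Pythagorean decomposition is just a slightly more explicit phrasing of the best-approximation property the paper invokes.
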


\begin{proof}
Because $\bP \alpha =\bZ \alpha=0$ and $\bP$ is the $\widetilde M$-projection onto $\mathbb{W}_\#$, we have 
$$
\begin{aligned}
\|(u_i-\alpha)-\bP u_i\|_{\widetilde M}& =\|u_i-\alpha-\bP(u_i-\alpha)\|_{\widetilde M} \le \|(u_i-\alpha)-\bZ (u_i-\alpha)\|_{\widetilde M}
\\ & =\|u_i-\bZ u_i -\alpha\|_{\widetilde M}  \leq \|u_i - \bG u_i-\alpha\|_{\widetilde M} + \|\bG u_i - \bZ u_i\|_{\widetilde M}.
\end{aligned}
$$
The second leg is bounded using Lemma~\ref{l:L2}.
\end{proof}


\subsection{Energy Estimate}
We now focus on estimates for  $\| u_i- \bZ u_i\|_{\widetilde A}$.
\begin{theorem}[Energy estimate]\label{t:energy}
Let $\{\lambda_j\}_{j\in J}$ be an exact eigenvalue cluster satisfying the separation assumption \eqref{a:separation}. Let $\{\Lambda_{j}\}_{j=1}^{\dim (\mathbb{V}_\#)}$ be a set of approximate FEM eigenvalues satisfying $\mu(J)<\infty$.
We fix $i \in J$ and denote by $u_i \in H^1_\#(\gamma)$ any eigenfunction associated with $\lambda_i$.
Then for any $\alpha \in \mathbb{R}$, the following bound holds:
\begin{align}
\label{energy_bound}
\begin{aligned}
\| u_i- \bZ u_i\|_{\widetilde A}&\leq \|u_i - \bG u_i\|_{\widetilde A} + C \sqrt{\lambda_i} (1+\mu(J)) \|u_i- \bG u_i-\alpha\|_{\widetilde M}\\
&+ C \sqrt{\lambda_i} (1+\mu(J)) h^{k+1} \|u_i\|_{\widetilde M}. 
\end{aligned}
\end{align}

\end{theorem}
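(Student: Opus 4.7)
The plan is to start from the triangle inequality
\[
\|u_i - \bZ u_i\|_{\widetilde A} \leq \|u_i - \bG u_i\|_{\widetilde A} + \|\bG u_i - \bZ u_i\|_{\widetilde A},
\]
since the first term already appears on the right-hand side of \eqref{energy_bound}. Thus everything reduces to controlling $\|W\|_{\widetilde A}$ where $W := \bG u_i - \bZ u_i = (I-\bP)\bG u_i$, and the goal is to show that $\|W\|_{\widetilde A} \lesssim \sqrt{\lambda_i}(1+\mu(J))\bigl(\|u_i - \bG u_i - \alpha\|_{\widetilde M} + h^{k+1}\|u_i\|_{\widetilde M}\bigr)$.

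The key observation is that the energy bound on $W$ has essentially already been derived inside the proof of Lemma~\ref{l:L2}: estimate \eqref{vak} gives
\[
\|W\|_{\widetilde A}^2 \lesssim \lambda_i\bigl(h^{k+1}\|u_i\|_{\widetilde M}\|W\|_{\widetilde M} + \|u_i-\bG u_i-\alpha\|_{\widetilde M}\|W\|_{\widetilde M} + \|W\|_{\widetilde M}^2 + h^{2k+2}\|u_i\|_{\widetilde M}^2\bigr).
\]
So I would invoke \eqref{vak} directly (there is no need to reprove it) and then substitute the $L^2$-type estimate for $\|W\|_{\widetilde M}$ supplied by Lemma~\ref{l:L2}, namely
\[
\|W\|_{\widetilde M} \lesssim (1+\mu(J))\bigl(\|u_i-\bG u_i-\alpha\|_{\widetilde M} + h^{k+1}\|u_i\|_{\widetilde M}\bigr).
\]
After this substitution, each summand on the right of \eqref{vak} is bounded by $\lambda_i(1+\mu(J))^2$ times $\bigl(\|u_i-\bG u_i-\alpha\|_{\widetilde M} + h^{k+1}\|u_i\|_{\widetilde M}\bigr)^2$, possibly after one application of Young's inequality to absorb the mixed terms of the form $\|u_i\|_{\widetilde M}\|W\|_{\widetilde M}$ into a squared quantity plus a pure $h^{2k+2}\|u_i\|_{\widetilde M}^2$ term.

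Taking a square root and combining with the triangle inequality from the first step yields exactly \eqref{energy_bound}. I do not anticipate a real obstacle here, since Lemma~\ref{l:L2} has already done the delicate eigenfunction-expansion bookkeeping (the projection onto the orthogonal complement of $\mathbb W_\#$, use of the $U_\ell$-expansion, and the appearance of the factor $\mu(J)$); the present theorem is essentially a corollary that repackages the intermediate estimate \eqref{vak} using the $L^2$ bound just established. The only minor care required is to keep track of the factor $\sqrt{\lambda_i}$ — it appears from taking the square root of the $\lambda_i$ prefactor in \eqref{vak}, and matches the $\sqrt{\lambda_i}$ stated in \eqref{energy_bound}.
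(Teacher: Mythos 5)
Your proposal matches the paper's proof: both start from the intermediate bound \eqref{vak}, apply Young's inequality and substitute the $L^2$ estimate \eqref{L2L} from Lemma~\ref{l:L2} to bound $\|W\|_{\widetilde A}$, and finish with the triangle inequality $\|u_i-\bZ u_i\|_{\widetilde A}\le\|u_i-\bG u_i\|_{\widetilde A}+\|W\|_{\widetilde A}$. The only difference is cosmetic ordering of the steps.
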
	

\begin{proof}
Let $W := \bG u_i - \bZ u_i$.  We restart from the estimate  \eqref{vak} for $\| W \|_{\widetilde A}$, apply Young's inequality, and take advantage of the $L^2$ error bound \eqref{L2L} to deduce
$$
\begin{aligned}
\|W\|_{\widetilde A}^2  & \lesssim   \lambda_i ( h^{2k+2}\|u_i\|_{\widetilde M}^2 + \|u_i-{\bG} u_i-\alpha\|_{\widetilde M}^2 + \|W\|_{\widetilde M}^2)
\\ & \lesssim  \lambda_i (1+\mu(J))^2 (h^{2k+2}\|u_i\|_{\widetilde M}^2 + \|u_i-{\bG} u_i-\alpha\|_{\widetilde M}^2).  
\end{aligned}
$$
The desired result follows from 
$\|u_i-\bZ u_i\|_{\widetilde A} \leq \|u_i-\bG u_i\|_{\widetilde A} +\|W\|_{\widetilde A}.$
\end{proof}

We end by commenting on \eqref{energy_bound}.  Because ${\bf G}$ is the Galerkin projection onto $\mathbb{V}_\#$ with respect to $\widetilde A(\cdot, \cdot)$, we have for the first term in \eqref{energy_bound} that
\begin{align}
\|u_i-{\bf G}u_i\|_{\widetilde A}  \le \inf_{V \in \mathbb{V}_\#} \|u_i-V\|_{\widetilde A} 
= \inf_{V \in \mathbb{V}} \|u_i-V \|_{\widetilde A}.
\end{align}
Here we used that $\widetilde A(\tilde v,1)=0$, $v \in H^1(\gamma)$.
The last term above may be bounded in a standard way (cf. \cite{CD15} for definition of a suitable interpolation operator of Scott-Zhang type in any space dimension).  Similar comments apply to \eqref{l2_bound}.

Bounding $\|u_i-{\bf G}\|_{\widetilde M}$ is more complicated.  Because $\Gamma$ is not smooth, it is not possible to directly carry out a duality argument to obtain $L_2$ error estimates for ${\bf G}$ with no geometric error term.  Abstract arguments of \cite{D09} however give error bounds for $u_i-{\bG} u_i$ satisfying $\tilde a(u_i-{\bG}u_i, V) =F(V) ~ \forall V \in \mathbb V_\#$.  Letting  $F(V)= (\tilde a-\widetilde A)(u_i-{\bG} u_i, V)$, the fact that $\widetilde A(\tilde v,1)=0$ for any $v \in H^1(\gamma)$ yields $$
\tilde a(u_i-{\bG}u_i, V) =F(V)\qquad \forall V \in \mathbb V.
$$
Choosing $\alpha= \frac{1}{|\gamma|} \int_\gamma {\bG} (u-u_i)$,  \cite[Theorem 3.1]{D09} along with \eqref{a-ak}   then yield
$$
\|u_i-{\bG} u_i- \alpha \|_{\tilde m}  \lesssim h \min_{V \in \mathbb{V}} \|u_i-V\|_{\tilde a} + h^{k+1} \|u_i -{\bG} u_i\|_{\tilde a}
  \lesssim  h \min_{V \in \mathbb{V}} \|u_i-V\|_{\widetilde A}.
$$
Thus the $L_2$ term above may also be bounded in a standard way.

\subsection{Relationship between projection errors}  Many classical papers on finite element eigenvalue approximations contain energy error bounds for the projection error $\|v-\bP v\|_{\tilde a}$  \cite{BO89, BO91}.  We briefly investigate the relationship between this error notion and our notion $\|v-\bZ v\|_{\tilde a}$.  Because ${\bZ}$ is a Galerkin projection, we have $\|v-{\bZ} v\|_{\widetilde A} \le \|v-\bP v\|_{\widetilde A}$.  In Proposition \ref{prop:equiv} we show that the reverse inequality holds up to higher-order terms.  These two error notions are thus asymptotically equivalent.  
\begin{lemma}\label{Plesssimon}
Let $\{\lambda_j\}_{j\in J}$ be an exact eigenvalue cluster indexed by $J$  satisfying the separation assumption \eqref{a:separation}. Let $\{\Lambda_{j}\}_{j=1}^{\dim (\mathbb{V}_\#)}$ be set of approximate FEM eigenvalues satisfying $\mu(J)<\infty$.
We assume that for an absolute constant $B$, there holds $\max\{\Lambda_{n+N}\} \le B.$
Then for $v \in H^1(\gamma)$, we have
$$
\|\bP v\|_{\widetilde A} \le \sqrt{B} \| v\|_{\widetilde M}.
$$
\end{lemma}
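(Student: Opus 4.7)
The proof is essentially a direct spectral decomposition computation, exploiting that $\bP$ projects onto a finite-dimensional span of discrete eigenfunctions. The plan is to expand $\bP v$ in the basis $\{U_i\}_{i \in J}$ of $\mathbb{W}_\#$, compute the $\widetilde A$-norm using the discrete eigenvalue relation, and then use the uniform upper bound $B$ on the cluster eigenvalues together with the contractivity of the $\widetilde M$-orthogonal projection.

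First I would recall from Section~\ref{sec2} that the discrete eigenfunctions $\{U_j\}$ are $M$-orthonormal on $\Gamma$, which by the definition \eqref{mkdef} of $\widetilde M(\cdot, \cdot)$ transfers to pairwise $\widetilde M$-orthonormality of $\{\tilde U_j\}$ on $\gamma$. Moreover, the discrete eigenvalue equation \eqref{e:discrete} written in the lifted form gives $\widetilde A(U_i, U_j) = \Lambda_i \widetilde M(U_i, U_j) = \Lambda_i \delta_{ij}$ for all $i,j$. Since $\bP$ is the $\widetilde M$-orthogonal projection onto $\mathbb{W}_\# = \mathrm{span}\{U_i : i \in J\}$, we may write
\begin{equation*}
\bP v = \sum_{i \in J} c_i U_i, \qquad c_i := \widetilde M(v, U_i).
\end{equation*}

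Next I would compute directly
\begin{equation*}
\|\bP v\|_{\widetilde A}^2 = \widetilde A\Bigl(\sum_{i \in J} c_i U_i, \sum_{j \in J} c_j U_j\Bigr) = \sum_{i \in J} c_i^2\, \Lambda_i \le B \sum_{i \in J} c_i^2 = B\, \|\bP v\|_{\widetilde M}^2,
\end{equation*}
where the inequality uses the hypothesis $\Lambda_{n+N} \le B$, hence $\Lambda_i \le B$ for every $i \in J$, and the last equality uses $\widetilde M$-orthonormality of $\{U_i\}$. Finally, since $\bP$ is the $\widetilde M$-orthogonal projection, it is a contraction on $L_2$ equipped with $\widetilde M$, so $\|\bP v\|_{\widetilde M} \le \|v\|_{\widetilde M}$. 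Combining these gives $\|\bP v\|_{\widetilde A} \le \sqrt{B}\, \|v\|_{\widetilde M}$, as claimed.

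There is no real obstacle here; the only delicate point is to be careful that the orthonormality and eigenvalue relations are stated in the lifted bilinear forms $\widetilde A$ and $\widetilde M$ (not $A$ and $M$), which is precisely what the definitions \eqref{akdef}--\eqref{mkdef} give for free. The hypothesis $\mu(J) < \infty$ is not needed for this purely algebraic estimate; it is only listed to keep the statement consistent with the surrounding theory.
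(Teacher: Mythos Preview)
Your proof is correct and follows essentially the same route as the paper's own argument: expand $\bP v$ in the $\widetilde M$-orthonormal basis $\{U_i\}_{i\in J}$ of $\mathbb{W}_\#$, use the discrete eigenvalue relation $\widetilde A(U_i,U_j)=\Lambda_i\,\widetilde M(U_i,U_j)$ to obtain $\|\bP v\|_{\widetilde A}^2=\sum_{i\in J}\Lambda_i c_i^2\le B\|\bP v\|_{\widetilde M}^2$, and then invoke the $\widetilde M$-contractivity of $\bP$. Your remark that the hypothesis $\mu(J)<\infty$ plays no role in this algebraic computation is also accurate.
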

\begin{proof}
Since $\bP v \in \mathbb W_\#$, there exists $\beta_j$, $j\in J$, such that
$\bP v=  \sum_{j \in J}\beta_j U_{j}.$
Thus
\begin{align*}
\|\bP v\|_{\widetilde A}^2  & = \widetilde A(\bP v, \bP v) = \sum_{j \in J} \beta_j \widetilde A(U_{j},\bP v) = \sum_{j\in J}\beta_j \Lambda_{j}\widetilde M(U_{j},\bP v)
\\ 
& =\sum_{j \in J}\beta_j \Lambda_{j}\widetilde M(U_{j},\sum_{j\in J}\beta_j U_{j}) = \sum_{j\in J}\beta_j^2 \Lambda_{j} \widetilde M(U_j,U_j) \leq B \|\bP v\|_{\widetilde M}^2 \le B\|v\|_{\widetilde M}^2,
\end{align*}
where we used that the discrete eigenfunctions $\{ U_j \}$ are $\widetilde M$-orthogonal.
\end{proof}

\begin{proposition}
\label{prop:equiv}
Let $\{\lambda_j\}_{j\in J}$ be an exact eigenvalue cluster indexed by $J$  satisfying the separation assumption \eqref{a:separation}. Let $\{\Lambda_{j}\}_{j=1}^{\dim (\mathbb{V}_\#)}$ be set of approximate FEM eigenvalues satisfying $\mu(J)<\infty$.
Furthermore, assume that for some absolute constant $B$, $\Lambda_{N+n} \leq B.$
Let $u_i$ be an eigenfunction with eigenvalues $\lambda_i$, for some $i \in J$. 
Then the following bound holds for any $\alpha \in \mathbb{R}$:
$$
\|u_i-\bP u_i\|_{\widetilde A} \leq \|u_i-\bZ u_i\|_{\widetilde A} + \sqrt{B}\|u_i-  \bG u_i-\alpha\|_{\widetilde M}.
$$
\end{proposition}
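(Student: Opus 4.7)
The plan is to split $u_i-\bP u_i$ using the Galerkin projection $\bZ u_i$ as an intermediate term, and then to handle the remainder via the preceding Lemma \ref{Plesssimon}. First I would apply the triangle inequality in the $\widetilde A$-norm:
\begin{equation*}
\|u_i-\bP u_i\|_{\widetilde A} \le \|u_i-\bZ u_i\|_{\widetilde A} + \|\bZ u_i-\bP u_i\|_{\widetilde A}.
\end{equation*}
This isolates the object we actually need to estimate, namely $\bZ u_i - \bP u_i$.

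The key identity is that $\bZ u_i=\bP(\bG u_i)$ by \eqref{e:bZ}. Hence
\begin{equation*}
\bZ u_i - \bP u_i = \bP(\bG u_i - u_i).
\end{equation*}
To introduce the arbitrary constant $\alpha$, I observe that $\bP\alpha=0$: indeed, for any constant $\alpha\in\mathbb R$ and any $V\in\mathbb W_\#\subset\mathbb V_\#$, we have $\widetilde M(\alpha, V)=\alpha\int_\Gamma V\, d\Sigma=0$ since discrete functions in $\mathbb V_\#$ have vanishing mean on $\Gamma$. Therefore
\begin{equation*}
\bZ u_i - \bP u_i = -\bP\bigl(u_i-\bG u_i-\alpha\bigr).
\end{equation*}

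The final ingredient is Lemma \ref{Plesssimon}, which under the uniform bound $\Lambda_{n+N}\le B$ gives $\|\bP v\|_{\widetilde A}\le \sqrt{B}\,\|v\|_{\widetilde M}$ for any $v\in H^1(\gamma)$. Applying this with $v=u_i-\bG u_i-\alpha$ yields
\begin{equation*}
\|\bZ u_i-\bP u_i\|_{\widetilde A} = \|\bP(u_i-\bG u_i-\alpha)\|_{\widetilde A} \le \sqrt{B}\,\|u_i-\bG u_i-\alpha\|_{\widetilde M},
\end{equation*}
and substituting into the triangle inequality delivers the claim. The proof is essentially a bookkeeping exercise; the only mild subtlety is remembering that $\bP$ annihilates constants so that $\alpha$ can be inserted for free, and this is really the only place the structure of $\mathbb V_\#$ (zero-mean discrete functions) enters. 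No further estimates, interpolation, or geometric consistency bounds are needed beyond what is already packaged in Lemma \ref{Plesssimon}.
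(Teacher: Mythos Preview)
Your proof is correct and follows essentially the same approach as the paper: triangle inequality, the identity $\bZ u_i-\bP u_i=\bP(\bG u_i-u_i)=-\bP(u_i-\bG u_i-\alpha)$, and then Lemma~\ref{Plesssimon}. You supply a bit more justification for $\bP\alpha=0$ than the paper does here, but the argument is otherwise identical.
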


\begin{proof}
By the triangle inequality we have:
$$
\|u_i-\bP u_i\|_{\widetilde A} \leq \|u_i-\bZ u_i\|_{\widetilde A} + \|\bZ u_i- \bP u_i \|_{\widetilde A} =\|u_i-\bZ u_i\|_{\widetilde A} + \|\bP (u_i-  \bG u_i-\alpha) \|_{\widetilde A}. 
$$
Applying Lemma \ref{Plesssimon} for the last term gives
$$
\|\bP (u_i-  \bG u_i-\alpha) \|_{\widetilde A}\leq  \sqrt{B} \|u_i-  \bG u_i-\alpha\|_{\widetilde M},
$$
and as a consequence
$$
\|u_i-\bP u_i\|_{\widetilde A} \leq\|u_i-\bZ u_i\|_{\widetilde A} + \sqrt{B}\|u_i-  \bG u_i-\alpha\|_{\widetilde M}.
$$
\end{proof}


\section{Numerical Results for Eigenfunctions} \label{sec5}
Let $\gamma$ be the unit sphere in $\mathbb{R}^3$. The eigenfunctions of the Laplace-Beltrami operator are then the spherical harmonics.  The eigenvalues are given by $\ell(\ell+1)$, $\ell=1,2,3...$, with multiplicity $2 \ell+1$.  Computations were performed on a sequence of uniformly refined quadrilateral meshes using deal.ii \cite{BHK:07}; our proofs extend to this situation with modest modifications.  
When comparing norms of errors we took the first spherical harmonic for each eigenvalue $\ell(\ell+1)$ as the exact solution and then projected this function onto the corresponding discrete invariant space having dimension $2\ell+1$.  

\subsection{Eigenfunction error rates}
We calculated the eigenfunction error $\|u_1-\bP u_1\|_{\tilde{M}}$ and $\|u_1-\bP u_1\|_{\tilde{A}}$ for the lowest spherical harmonic corresponding to $\lambda_1=2$. From Theorem \ref{t:L2} and the results of \cite{D09}, we expect 
\begin{equation}
\|u_1-\bP u_1\|_{\tilde{M}}\lesssim C(\lambda)(h^{r+1}+h^{k+1}).
\end{equation}
From Proposition \ref{prop:equiv} and Theorem \ref{t:energy}, we expect 
\begin{equation}
\|u_1-\bP u_1\|_{\tilde{A}}\lesssim C(\lambda)(h^{r} + h^{k+1}).
\label{ProjError}
\end{equation}
We postpone discussion of dependence of the constants on spectral properties to Section~\ref{ss:numeric_constant}. When $r=1$ and $k=2$,the $L_2$ error is dominated by the PDE approximation (Figure \ref{fig2}), $h^{k+1} = h^3 \lesssim h^2=h^{r+1}$. When $r=3$ and $k=1$ we see the $L_2$ error is dominated by the geometric approximation (Figure 2), $h^{r+1} =h^4 \lesssim h^2 =h^{k+1}$. This illustrate the sharpness of our theory with respect to the approximation degrees. The energy error behavior reported in Figure \ref{fig2} similarly indicates that \eqref{ProjError} is sharp.  

\setlength{\unitlength}{.75cm}
\begin{figure}[h]
\centering
\includegraphics[scale=.42]{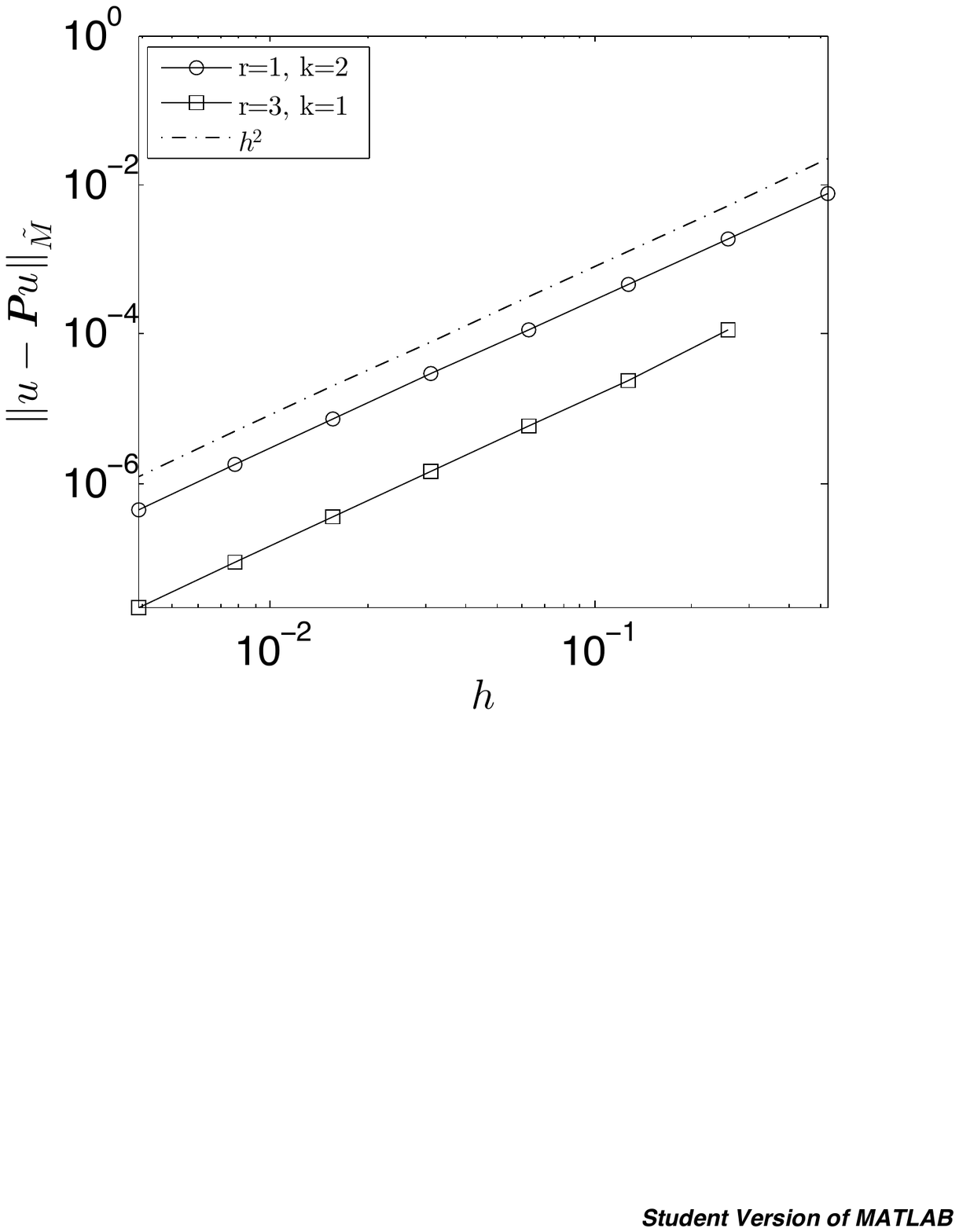}
\includegraphics[scale=.42]{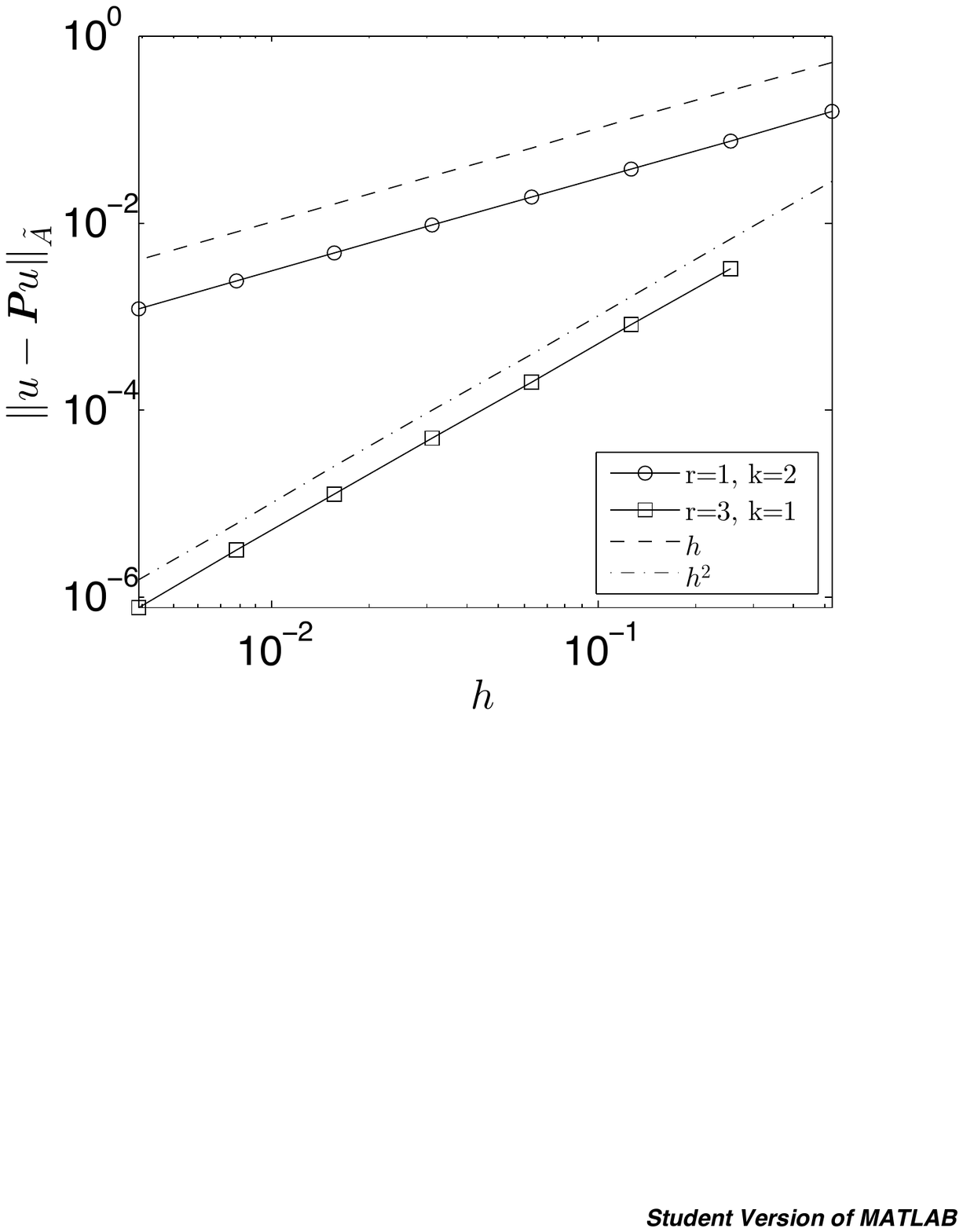}
\caption{Convergence rates of the approximate invariant eigenspace corresponding to the first eigenvalue on the sphere:  $L_2$ errors (left) and energy errors (right).}
\label{fig2}
\end{figure}

\subsection{Numerical evaluation of constants}\label{ss:numeric_constant}
In the left plot of Figure \ref{fig3} we plot $\frac{\|u-\bP u u\|_{\ak}}{\sqrt{\lambda}(1+\mu(J))h^{k+1}}$ vs. $h$ for $r=3$ and $k=1$ to evaluate the quality of our constant in Theorem \ref{t:energy}.  Here the Galerkin error is $O(h^4)$ and the geometric error $O(h^2)$, so the geometric error dominates.  Consider the eigenvalues $\lambda=\ell (\ell+1)$, $\ell=1,..., 10$ and corresponding spherical harmonics.  We chose two different exact spherical harmonics for $\ell=10$ to determine whether the choice of harmonic would affect the computation.  In the left plot of Figure \ref{fig3}, we see that the ratio $\frac{\|u-\bP u\|_{\ak}}{\sqrt{\lambda}(1+\mu(J))h^{k+1}}$ decreases moderately as $\lambda$ increases, indicating that the constant in Theorem \ref{t:energy} may not be sharp.  We thus also plotted $\frac{\|u-\bP u\|_{\ak}}{\sqrt{\lambda}(2+\sqrt{\mu(J)})h^{k+1}}$ and found this quantity to be more stable as $\lambda$ increases (see the right plot of Figure \ref{fig3}).  Thus it is possible that the dependence of the constant in front of the geometric error term in Theorem \ref{t:energy} is not sharp with respect to its dependence on $\mu(J)$. Our method of proof does not seem to provide a pathway to proving a sharper dependence, however, and our numerical experiments do confirm that the constant in front of the geometric error depends on spectral properties.  

In Figure \ref{fig4} we similarly test the sharpness of the geometric constant in the eigenvalue error estimate \eqref{eig_est} by plotting $\frac{|\lambda-\Lambda|}{\lambda h^2}$. This quantity is very stable as $\lambda$ increases, thus verifying the sharpness of the estimate as well as the correctness of the order, $O(h^{k+1})$ for $k=1$. In Section 7 we observe that for $k\geq 2$ the geometric error is between $h^{k+1}$ and $h^{2k}$. We delay giving numerical details until laying a theoretical foundation for explaining these superconvergence results. 

\setlength{\unitlength}{.75cm}
\begin{figure}[h]
\centering
\includegraphics[scale=.34]{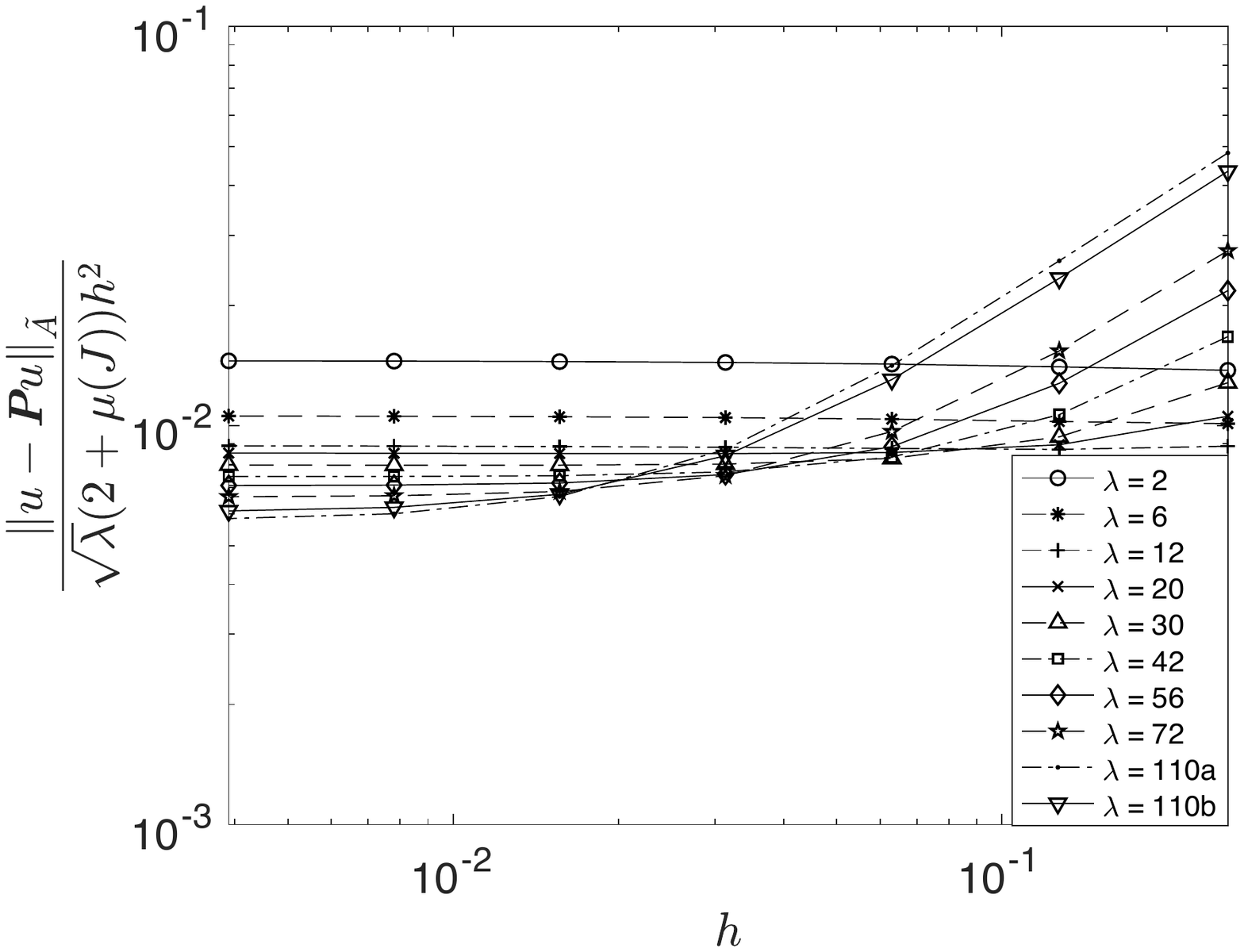}
\includegraphics[scale=.34]{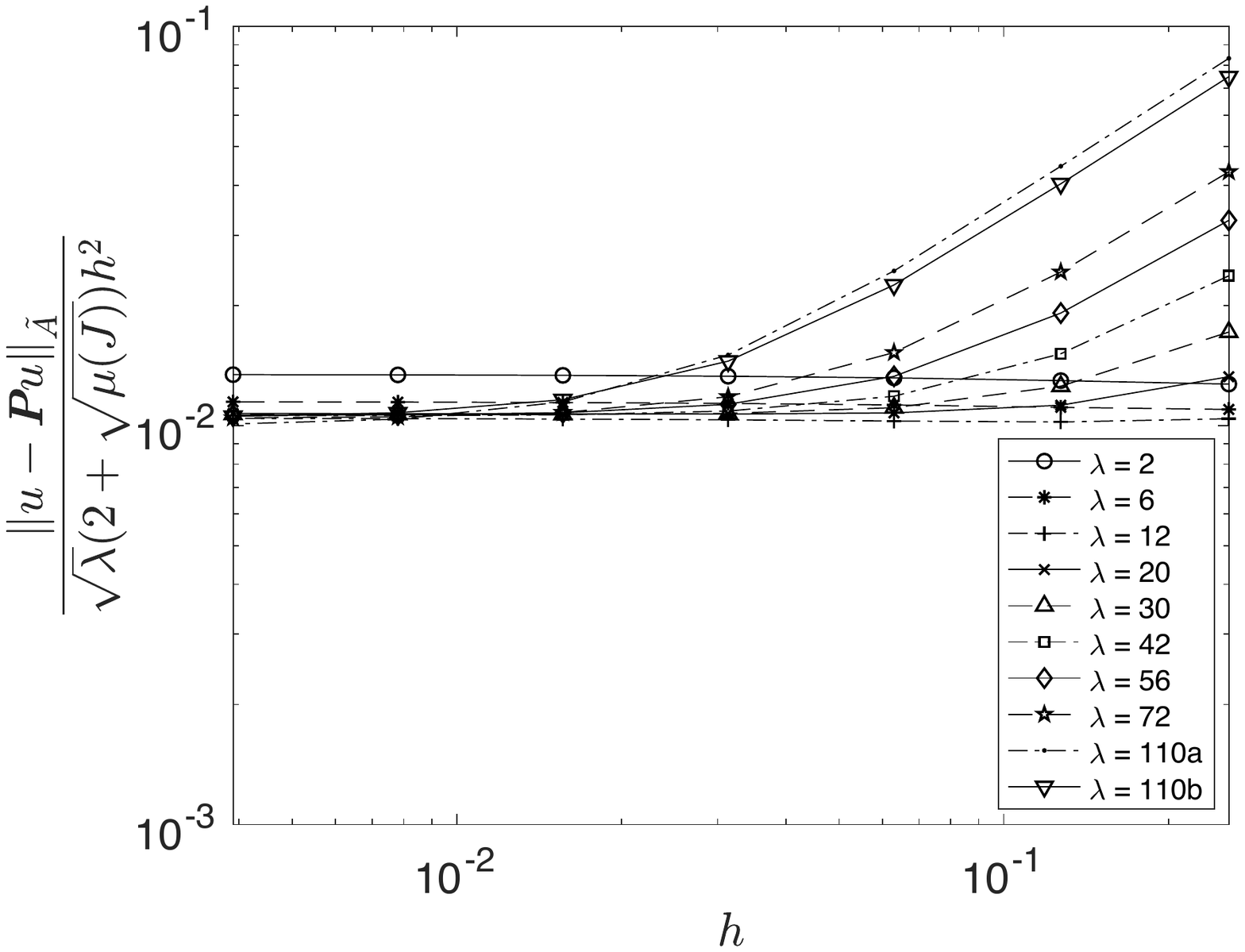}
\caption{Dependence of geometric portion in energy errors on spectral constants:  Theoretically established constant $\frac{\|u-\Ek u\|_{\ak}}{\sqrt{\lambda}(1+\mu(J))h^{k+1}}$ (left) and conjectured constant $\frac{\|u-\Ek u\|_{\ak}}{\sqrt{\lambda}(2+\sqrt{\mu(J)})h^{k+1}}$ (right).}
\label{fig3}
\end{figure}

\setlength{\unitlength}{.75cm}
\begin{figure}[h]
\centering
\includegraphics[scale=.36]{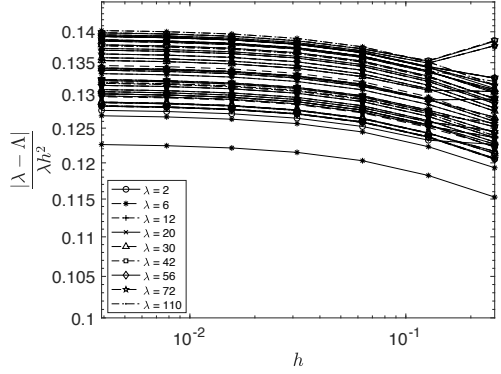}
\caption{Dependence of geometric portion of eigenvalue errors on spectral constants, $k=1$:  Theoretically established constant $\frac{|\lambda-\Lambda|}{\lambda h^2}$ for eigenvalues $\ell(\ell+1)$, $\ell=1,..., 10$.}
\label{fig4}
\end{figure}


\section{Superconvergence of Eigenvalues} \label{sec6}
In this section we analyze the geometric error estimates (2.16) and (2.17) from the viewpoint of numerical integration. Our approach is not cluster robust, but allows us to analyze superconvergence effects and leads to a characterization of the relationship between the choice of interpolation points in the construction of $\Gamma$ and the convergence rate for the eigenvalues. We show that we may obtain geometric errors of order $O(h^\ell)$ for $k+1\leq \ell \leq 2k$ by choosing interpolation points in the construction of $\Gamma$ that correspond to a quadrature scheme of order $\ell$. Because these superconvergence effects require a more subtle analysis, we do not trace the dependence of constants on spectral properties in this section and are only interested in orders of convergence. We denote the untracked spectrally dependent constant by $C_\lambda$, which may change values throughout the calculations.

We first state a result similar to \cite[Theorem 5.1]{BO90}, where effects of numerical quadrature on eigenvalue convergence were analyzed.  Let $\lambda_j$ be an eigenvalue of \eqref{weig} with multiplicity $N$.  Let $\W$ and $\W_\#$ be the spans of the eigenfunctions of $\lambda_j$ and the $N$ FEM eigenfunctions associated with the approximating eigenvalues of $\lambda_j$.
\begin{lemma}{Eigenvalue Bound.} \label{MacroBound}	
 Let $\bP_{\lambda_j}$ be the projection onto $\W$ using the $L_2$ inner product $m(\cdot,\cdot)$. Let $U_j$ be an eigenfunction in $\W_\#$ such that $\|U_j\|_m=1$ and $A(U_j,U_j) = \Lambda_jM(U_j,U_j)$. Then
\begin{align}
\label{eigbound}
\begin{aligned}
|& \lambda_j  - \Lambda_j| = \left|\frac{a(\bP_{\lambda_j}U_j,\bP_{\lambda_j}U_j)}{m(\bP_{\lambda_j}U_j,\bP_{\lambda_j}U_j)}- \frac{\widetilde{A}(U_j,U_j)}{\widetilde{M}(U_j,U_j)}\right|  \leq \|\bP_{\lambda_j}U_j-U_j\|_a^2  \\ & + \lambda_j\|\bP_{\lambda_j}U_j-U_j\|_m^2
+ \Lambda_j|m(U_j,U_j)-\widetilde{M}(U_j,U_j)| + |\widetilde{A}(U_j,U_j) - a(U_j,U_j)|.
\end{aligned}	
\end{align}
\end{lemma}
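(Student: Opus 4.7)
The plan is to first verify the displayed equality and then establish the bound via an algebraic Rayleigh--quotient perturbation identity with the two geometric consistency terms inserted by hand. For the equality: since $\bP_{\lambda_j} U_j \in \W$ lies in the continuous eigenspace associated with $\lambda_j$, it satisfies $a(\bP_{\lambda_j} U_j, v) = \lambda_j m(\bP_{\lambda_j} U_j, v)$ for every $v$, so its $(a,m)$-Rayleigh quotient equals $\lambda_j$ (provided $\bP_{\lambda_j} U_j \ne 0$, which holds for $h$ small by the spectral convergence of Section~\ref{sec3}); similarly, pulling the discrete eigenvalue relation $A(U_j,U_j) = \Lambda_j M(U_j,U_j)$ back to $\gamma$ via the lift gives $\widetilde A(U_j,U_j) = \Lambda_j \widetilde M(U_j,U_j)$, producing a Rayleigh quotient of $\Lambda_j$ with respect to the lifted forms.

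For the inequality, set $w := \bP_{\lambda_j} U_j$ and decompose $U_j = w + (U_j - w)$. Because $\bP_{\lambda_j}$ is the $m$-orthogonal projection onto $\W$ and $w$ is an eigenfunction for $\lambda_j$, both $m(w,U_j-w) = 0$ and $a(w,U_j-w) = \lambda_j m(w,U_j-w) = 0$. Expanding $a(U_j,U_j)$ and $m(U_j,U_j)$ via the decomposition and using $a(w,w) = \lambda_j m(w,w)$ collapses the cross terms and yields the key identity
\begin{equation*}
a(U_j,U_j) - \lambda_j m(U_j,U_j) = \|U_j - w\|_a^2 - \lambda_j \|U_j - w\|_m^2.
\end{equation*}
Next, using the normalization $m(U_j,U_j) = 1$ together with $\widetilde A(U_j,U_j) = \Lambda_j \widetilde M(U_j,U_j)$, I add and subtract $a(U_j,U_j)$ and $\Lambda_j \widetilde M(U_j,U_j)$ to obtain the telescoping identity
\begin{equation*}
\Lambda_j - \lambda_j = [\widetilde A(U_j,U_j) - a(U_j,U_j)] + [a(U_j,U_j) - \lambda_j m(U_j,U_j)] + \Lambda_j [m(U_j,U_j) - \widetilde M(U_j,U_j)].
\end{equation*}
Substituting the displayed identity for the middle bracket and applying the triangle inequality, with $\lambda_j > 0$ so that $\|U_j-w\|_a^2$ and $\lambda_j\|U_j-w\|_m^2$ are each already nonnegative, produces exactly the stated bound.

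There is no deep obstacle --- the argument is essentially an algebraic identity. The only delicate bookkeeping is that the cross terms from the $a$ and $m$ expansions must vanish simultaneously, which requires $w$ to be \emph{both} an eigenfunction of $\lambda_j$ and the $m$-orthogonal projection of $U_j$; any other splitting would leave an uncontrolled mixed term. The real work in the section then lies in bounding $|\widetilde A(U_j,U_j) - a(U_j,U_j)|$ and $|\widetilde M(U_j,U_j) - m(U_j,U_j)|$ via a numerical-quadrature interpretation of the lift, which is what eventually produces the $h^\ell$ superconvergence term; the role of this lemma is to cleanly isolate those geometric quantities from the Galerkin-type contribution $\|U_j - w\|_a^2 + \lambda_j \|U_j - w\|_m^2$, at the modest cost of giving up cluster robustness with respect to $\lambda_j$ and $\Lambda_j$.
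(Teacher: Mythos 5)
Your proof is correct and follows essentially the same route as the paper: both hinge on the identity $\|\bP_{\lambda_j}U_j-U_j\|_a^2 - \lambda_j\|\bP_{\lambda_j}U_j-U_j\|_m^2 = a(U_j,U_j) - \lambda_j m(U_j,U_j)$ (you derive it via the $m$-orthogonal decomposition $U_j = w + (U_j - w)$, the paper by directly expanding the norms; the two are algebraically identical), then insert the discrete eigenvalue relation $\widetilde A(U_j,U_j) = \Lambda_j\widetilde M(U_j,U_j)$ and the normalization $m(U_j,U_j)=1$ to produce the telescoping expression and finish with the triangle inequality. Your additional remark justifying the displayed equality via $\bP_{\lambda_j} U_j \ne 0$ for small $h$ is a sensible clarification the paper leaves implicit.
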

\begin{proof}
	Since $a(\bP_{\lambda_j}U_j,U_j) = \lambda_j m(\bP_{\lambda_j}U_j,U_j)$ and $\|\bP_{\lambda_j}U_j\|_a^2 = \lambda_j\|\bP_{\lambda_j}U_j\|_m^2$,
	$$
	\|\bP_{\lambda_j}U_j-U_j\|_a^2  - \lambda_j \|\bP_{\lambda_j}U_j-U_j\|_m^2 = \|\bP_{\lambda_j}U_j\|_a^2 + \|U_j\|_a^2 - 2a(\bP_{\lambda_j}U_j,U_j)
	$$
	$$
	 - \lambda_j\|\bP_{\lambda_j}U_j\|_m^2
	 + 2\lambda_jm(\bP_{\lambda_j}U_j,U_j) - \lambda_j\|U_j\|_m^2
    =
	a(U_j,U_j) - \lambda_j\|U_j\|_m^2.
	$$
	Noting the assumption that $\|U_j\|_m = 1$, we get
	\begin{equation}
		- \lambda_j = \|\bP_{\lambda_j}U_j-U_j\|_a^2 - \lambda_j\|\bP_{\lambda_j}U_j-U_j\|_m^2 - a(U_j,U_j). 
		\label{step1}
	\end{equation}
	Because $\widetilde{A}(U_j,U_j)-\Lambda_j\widetilde{M}(U_j,U_j)=0$ we get 
	
	$$
	- \lambda_j = \|\bP_{\lambda_j}U_j-U_j\|_a^2 - \lambda_j\|\bP_{\lambda_j}U_j-U_j\|_m^2  +[\widetilde{A}(U_j,U_j)- a(U_j,U_j)]  - \Lambda_j\widetilde{M}(U_j,U_j).
	$$
	Adding $\Lambda_j = \Lambda_jm(U_j,U_j)$ to both sides and taking absolute values gives the result.
\end{proof}

We now give a series of results bounding the terms on the right hand side of \eqref{eigbound}.  Recall that $\bP$ denotes the $\widetilde M$ projection onto $\W_\#$.  

\begin{lemma}
For $h$ small enough, $\{\bP u:u\in\W\}$ forms a basis for $\text{span}\{U:U\in\W_\#\}$. Moreover, for any $U\in \W_\#$ with $\|U\|_m=1$, 
\begin{equation}\label{coeffbound}
\sum_{i=1}^N|\alpha_i|^2\leq C(N).
\end{equation}
\end{lemma}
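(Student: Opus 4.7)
The plan is to view $\bP|_{\W} : \W \to \W_\#$ as a linear map between two $N$-dimensional spaces and prove it is an isomorphism for $h$ small via a Gram matrix perturbation argument; the coefficient bound then follows from a lower bound on the smallest eigenvalue of the Gram matrix.

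First I would fix an $\tilde m$-orthonormal basis $\{u_1,\dots,u_N\}$ of $\W$ and form the $N\times N$ Gram matrix
\[
G_{ij} := \widetilde M(\bP u_i,\bP u_j) = \widetilde M(u_i,\bP u_j) = \tilde m(u_i,u_j) + (\widetilde M - \tilde m)(u_i,u_j) + \widetilde M(u_i,\bP u_j - u_j),
\]
where the first equality uses the defining property of the $\widetilde M$-projection $\bP$. The first term is $\delta_{ij}$; the second is $O(h^{k+1})$ uniformly in $i,j$ by Corollary~\ref{c:geom}; for the third, Cauchy--Schwarz together with the norm equivalence \eqref{mkm} reduces matters to controlling $\|u_j - \bP u_j\|_{\widetilde M}$. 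Applying Theorem~\ref{t:L2} with the free parameter set to $0$ (legitimate since $u_j \in H^1_\#(\gamma)$) bounds this by $C(1+\mu(J))(\|u_j-\bG u_j\|_{\widetilde M} + h^{k+1})$, and standard approximation theory for the Ritz projection $\bG$ makes this quantity tend to $0$ as $h \to 0$. Hence $G = I + E(h)$ with $\|E(h)\|_{2} \to 0$.

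For $h$ small enough that $\|E(h)\|_{2} < 1/2$, the matrix $G$ is invertible, so $\{\bP u_i\}_{i=1}^N$ is linearly independent; since $\dim \W_\# = N$ by construction, this set is a basis of $\W_\#$. For the coefficient bound, expand $U = \sum_{i=1}^N \alpha_i \bP u_i$ and denote $\boldsymbol{\alpha} = (\alpha_1,\dots,\alpha_N)^T$. Using the spectral lower bound $\boldsymbol{\alpha}^T G \boldsymbol{\alpha} \ge \tfrac{1}{2}\sum_i |\alpha_i|^2$ together with the norm equivalence \eqref{mkm} yields
\[
\tfrac{1}{2} \sum_{i=1}^N |\alpha_i|^2 \le \boldsymbol{\alpha}^T G \boldsymbol{\alpha} = \|U\|_{\widetilde M}^2 \le (1+Ch^{k+1})^2 \|U\|_{\tilde m}^2 = (1+Ch^{k+1})^2,
\]
since $\|U\|_m = \|U\|_{\tilde m} = 1$ under the paper's convention of dropping the tilde on lifts. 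This establishes \eqref{coeffbound}.

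The main obstacle is decay of the third term in the Gram matrix, which depends on $L_2$ eigenfunction convergence via Theorem~\ref{t:L2}; the hypothesis $\mu(J)<\infty$ required there is guaranteed for $h$ below a threshold $h_0$ by Remark~\ref{r:mu}. Everything else is standard Gram matrix perturbation theory.
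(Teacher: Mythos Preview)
Your proof is correct and follows essentially the same route as the paper, which simply cites \cite[Lemma~5.1]{Gallistl} without reproducing the argument. The Gram matrix perturbation approach you carry out is the standard mechanism behind that reference; your contribution is to make explicit the two additional perturbation terms $(\widetilde M-\tilde m)(u_i,u_j)$ and $\widetilde M(u_i,\bP u_j-u_j)$ arising from the geometric nonconformity, and to control them via Corollary~\ref{c:geom} and Theorem~\ref{t:L2} respectively. One minor point: when you set $\alpha=0$ and appeal to ``standard approximation theory'' for $\|u_j-\bG u_j\|_{\widetilde M}\to 0$, note that $\bG u_j$ has zero mean on $\Gamma$, not $\gamma$, so a small $O(h^{k+1})$ mean correction intervenes before invoking \eqref{D-mbound}; this is harmless but worth a sentence.
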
	
\begin{proof}
The proof follows the same steps given in the proof of \cite[Lemma 5.1]{Gallistl}.
\end{proof}	
\begin{lemma} \label{P-H}	
Let $h$ be small enough that $\{\bP u:u\in\W\}$ forms a basis for $\text{span}\{U:U\in\W_\#\}$. Let $\{u_i\}_{i=1}^N$ be an orthonormal basis for $\W$ with respect to $m(\cdot, \cdot)$. Then	
	\begin{align}
		\|U - \bP_{\lambda_j}U\|_a &\leq C_\lambda \max_{i=1,...,N}\|u_i-\bP u_i\|_a  \lesssim h^r + h^{k+1},
		\label{abound}
\\		\|U - \bP_{\lambda_j}U\|_m & \leq C_\lambda \max_{i=1,...,N}\|u_i-\bP u_i\|_m \lesssim h^{r+1} + h^{k+1}
		\label{mmbound}
	\end{align}
for any $u\in\W$ and $U\in\W_\#$.
\end{lemma}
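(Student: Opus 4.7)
The plan is to decompose $U \in \W_\#$ in the basis $\{\bP u_i\}_{i=1}^N$ supplied by the preceding lemma and then reduce the projection error to the individual quantities $\|u_i - \bP u_i\|$. By homogeneity we may assume $\|U\|_m = 1$, so \eqref{coeffbound} yields coefficients $\alpha_i$ with $U = \sum_{i=1}^N \alpha_i \bP u_i$ and $\sum_{i=1}^N |\alpha_i|^2 \le C(N)$. Since each $u_i \in \W$ is fixed by the $m$-orthogonal projector $\bP_{\lambda_j}$,
$$
U - \bP_{\lambda_j} U = \sum_{i=1}^N \alpha_i (I - \bP_{\lambda_j})(\bP u_i - u_i).
$$

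The crucial observation is that $\bP_{\lambda_j}$, although defined as the $m$-orthogonal projection onto $\W$, also coincides with the $a$-orthogonal projection onto $\W$. Indeed, every $w \in \W$ satisfies $\tilde a(w,v) = \lambda_j\, \tilde m(w,v)$ for all $v \in H^1(\gamma)$, so $\tilde m$-orthogonality to $\W$ forces $\tilde a$-orthogonality to $\W$. This gives the two stability bounds
$$
\|(I - \bP_{\lambda_j}) v\|_a \le \|v\|_a, \qquad \|(I - \bP_{\lambda_j}) v\|_m \le \|v\|_m, \qquad v \in H^1(\gamma).
$$
Applying these term-by-term with the triangle and Cauchy-Schwarz inequalities and the coefficient bound then yields both \eqref{abound} and \eqref{mmbound}, with a constant $C_\lambda$ absorbing $\sqrt{N\, C(N)}$.

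The final convergence rates follow from the eigenfunction results of Section \ref{sec4}. For \eqref{abound}, Proposition \ref{prop:equiv} combined with Theorem \ref{t:energy} gives
$$
\|u_i - \bP u_i\|_{\widetilde A} \lesssim \|u_i - \bG u_i\|_{\widetilde A} + C_\lambda\, \|u_i - \bG u_i - \alpha\|_{\widetilde M} + C_\lambda\, h^{k+1},
$$
and the standard SFEM estimates \eqref{D-abound}--\eqref{D-mbound} applied to the smooth eigenfunction $u_i$ bound the right-hand side by $h^r + h^{k+1}$. For \eqref{mmbound}, Theorem \ref{t:L2} delivers $\|u_i - \bP u_i - \alpha\|_{\widetilde M} \lesssim h^{r+1} + h^{k+1}$ for a suitable $\alpha \in \mathbb R$; the constant is then removed by noting that $\int_\gamma u_i\,d\sigma = 0$ and $\int_\Gamma \bP u_i\,d\Sigma = 0$, which together with \eqref{m-mk} force $|\alpha| \lesssim h^{k+1}\|u_i\|_m$. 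The norm equivalences \eqref{aka} and \eqref{mkm} transfer everything from $\|\cdot\|_{\widetilde A},\|\cdot\|_{\widetilde M}$ to $\|\cdot\|_a,\|\cdot\|_m$ at a harmless cost $1 + Ch^{k+1}$.

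The main obstacle is the identification of $\bP_{\lambda_j}$ as an \emph{$a$-orthogonal} projection. Without this, the natural fallback $\|(I - \bP_{\lambda_j})v\|_a \lesssim \sqrt{\lambda_j}\,\|v\|_m$ would cost a full power of $h$ in \eqref{abound} (producing only $h^{r-1}$-type behavior after applying the $L_2$ rate); the eigenspace identity $\tilde a(w,\cdot) = \lambda_j\tilde m(w,\cdot)$ is exactly what removes this loss. Everything else in the proof is book-keeping of constants combined with the estimates already established in Sections \ref{sec2} and \ref{sec4}.
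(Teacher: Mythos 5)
Your proof is correct and takes essentially the same route as the paper: both decompose $U=\sum_i\alpha_i\bP u_i$, reduce $U-\bP_{\lambda_j}U$ to $\sum_i\alpha_i(I-\bP_{\lambda_j})(\bP u_i-u_i)$, exploit the eigenspace structure of $\W$ (your observation that $\bP_{\lambda_j}$ is simultaneously the $\tilde m$- and $\tilde a$-orthogonal projection is just a repackaging of the identity $\tilde a(\cdot,u_k)=\lambda_j\tilde m(\cdot,u_k)$ that the paper applies pointwise to the cross terms), and then invoke \eqref{coeffbound}. The rate arguments via Proposition \ref{prop:equiv}, Theorems \ref{t:L2} and \ref{t:energy}, and the $O(h^{k+1})$ control of the mean offset $\alpha$ through \eqref{m-mk} likewise mirror the paper's bookkeeping.
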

\begin{proof}
Recall that $N =\dim(\W)$. Since $U\in \text{span}\{\bP u:u\in\W\}$, there holds $U = \sum_{i=1}^N\alpha_i\bP u_i$ with the coefficients satisfying \eqref{coeffbound}.  Thus
	$$
	\bP_{\lambda_j}U - U = \sum_{k=1}^Nm(\sum_{i=1}^N\alpha_i\bP u_i,u_k)u_k - \sum_{i=1}^N\alpha_i\bP u_i.
	$$
	Adding $-\sum_{i=1}^N\alpha_i m(u_i,u_i)u_i + \sum_{i=1}^N\alpha_iu_i = 0$ and using $m(u_i,u_k) = 0$, $i\neq k$, yields
\begin{equation}
	\bP_{\lambda_j}U - U=\sum_{i=1}^N\alpha_i\left(\sum_{k=1}^Nm(\bP u_i-u_i,u_k)u_k +(u_i- \bP u_i) \right).
	\label{keyform}
\end{equation}
Using $m(\bP u_i-u_i,u_k) = \frac{1}{\lambda_j}a(\bP u_i-u_i,u_k)$, noting \eqref{coeffbound} and applying $\|\cdot\|_a$
to both sides of \eqref{keyform} yields the first inequality in \eqref{abound}, while applying $\|\cdot\|_m$ to both sides of \eqref{keyform} yields similarly the first inequality in \eqref{mmbound}. The second inequality in \eqref{abound} follows from Proposition \ref{prop:equiv} and \eqref{D-abound}.

To obtain the second inequality in \eqref{mmbound}, we first use \eqref{l2_bound} and $\|\cdot\|_m\simeq\|\cdot\|_{\tilde{M}}$: 
\begin{equation}\label{mbound}
\begin{aligned}
\|u_k-\bP u_k\|_m & \lesssim \|u_k-\bG u_k\|_m \\ & \leq \|u_k-\bG u_k - m(u_k-\bG u_k,1)\|_m 
+ \|m(u_k-\bG u_k,1)\|_m.
\end{aligned}
\end{equation}
Since $m(u_k,1)=\widetilde{M}(u_k,1)=0$, we have from \eqref{m-mk} that 
\begin{align*}
\|m(u_k-\bG u_k,1)\|_m &= \|m(\bG u_k,1)\|_m
\\
&=\sqrt{|\gamma|}|m(\bG u_k,1) - \widetilde{M}(\bG u_k,1)|\leq |\gamma|\|\bG u_k\|_{\widetilde{M}}h^{k+1}.
\end{align*}
Also, $\|\bG u_k\|_{\widetilde{M}} \lesssim \|\bG u_k\|_{\widetilde{A}}\lesssim \| u_k\|_{a}\lesssim C_\lambda$.
Bounding the first term on the right hand side of \eqref{mbound} using \eqref{D-mbound} completes the proof.						
\end{proof}

\begin{lemma}
Let $v\in H_\#^1(\gamma)$, let $d(x)$ be the signed distance function for $\gamma$, let $\psi(x)$ be the closest point projection onto $\gamma$, let $\nu$ be the normal vector of $\gamma$, let $\N$ be the normal vector of $\Gamma$, and $\{\mathbf{e}_i\}_{i=1}^n$ be the eigenvectors of the Hessian, $\mathbf{H}$, of $\gamma$, then 	
\begin{align}
&\begin{aligned}
|a(v,v)& -\widetilde{A}(v,v)| \leq
\left|\int_{\Gamma}d(x)\mathcal{H}\left[\nabla_{\Gamma}v\right]^T\nabla_\Gamma v d\Sigma\right|\\
&+2\left|\int_{\Gamma}d(x)\left(\sum_{i=1}^n\kappa_i(\mathbf{\psi}(x))\left[\nabla_{\Gamma}v\right]^T\left[ \mathbf{e}_i\otimes\mathbf{e}_i 
\right]\nabla_\Gamma v \right)d\Sigma\right| + O(h^{2k}),
\label{StiffBound}	
\end{aligned}
\\
&\begin{aligned}
\left|m(v,v)-\widetilde{M}(v,v)\right|&\leq\left|\int_{\Gamma}v^2d(x)\mathcal{H}d\Sigma \right|+ O(h^{2k}).
\label{massbound}
\end{aligned}
\end{align}
Here $\mathcal{H} = \sum_{i=1}^n\kappa_i(\mathbf{\psi}(x))$ is the scaled mean curvature of $\gamma$.
\end{lemma}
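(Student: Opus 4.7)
The plan is to derive both bounds by first changing variables to express $a(v,v)$ and $m(v,v)$ as integrals over $\Gamma$, then Taylor-expanding the resulting geometric factors $Q$ and $A_\gamma$ in powers of the distance function $d$, treating $d$ as an $O(h^{k+1})$ small parameter via \eqref{e:d_estim}.

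First I would use $\bpsi|_\Gamma$ as a diffeomorphism to write $a(v,v) = \int_\Gamma (A_\gamma \nabla_\Gamma v)\cdot \nabla_\Gamma v \; Q \, d\Sigma$ (or equivalently, invert the roles using \eqref{akdef}--\eqref{mkdef}), and likewise $m(v,v) = \int_\Gamma v^2 \, Q \, d\Sigma$. The standard formulas (cf.\ Dziuk, \cite{D09}) for the closest-point projection give
\begin{equation*}
D\bpsi(x) = (I - d(x) \mathbf{H}(\bpsi(x))) P(x), \qquad x \in \mathcal N,
\end{equation*}
from which $Q$ is computed as the product of $\bnu\cdot \bN$ and $\det_{T_x\Gamma}(I - d\mathbf{H})$. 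Expanding this determinant and recalling that the eigenvalues of $\mathbf{H}(\bpsi(x))$ are the principal curvatures $\kappa_i(\bpsi(x))$ with eigenvectors $\mathbf{e}_i$ gives
\begin{equation*}
Q(x) = 1 - d(x)\,\mathcal{H}(\bpsi(x)) + O(d^2) + O(|\bnu - \bN|^2),
\end{equation*}
since $\bnu\cdot\bN = 1 + O(|\bnu-\bN|^2)$. Using $\|d\|_{L_\infty(\Gamma)}\lesssim h^{k+1}$ and $\|\bnu - \bN\|_{L_\infty(\Gamma)}\lesssim h^k$ from \eqref{e:d_estim}, both remainders are $O(h^{2k})$. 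Multiplying by $v^2$, integrating, and subtracting $\widetilde M(v,v) = \int_\Gamma v^2 d\Sigma$ yields \eqref{massbound}.

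For \eqref{StiffBound} I would perform the analogous expansion for $A_\gamma$. Writing $\nabla_\gamma \tilde v = (I - d\mathbf{H})^{-1} P\, \nabla_\Gamma v$ (after projecting $\nabla_\Gamma v$ onto the tangent plane of $\gamma$ via $P$ and accounting for the $\bnu$ versus $\bN$ discrepancy) and using the Neumann-type expansion
\begin{equation*}
(I - d\mathbf{H})^{-1} = I + d\mathbf{H} + O(d^2),
\end{equation*}
the integrand $(A_\gamma \nabla_\Gamma v)\cdot \nabla_\Gamma v \, Q$ expands to $|\nabla_\Gamma v|^2$ plus a linear-in-$d$ correction. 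Gathering the linear terms carefully, one contribution comes from $Q$'s expansion, producing $-d\mathcal{H}|\nabla_\Gamma v|^2$, and the other from the $(I - d\mathbf{H})^{-1}$ factors acting on both gradients, producing $2 d (\mathbf{H} \nabla_\Gamma v)\cdot \nabla_\Gamma v = 2 d \sum_i \kappa_i (\nabla_\Gamma v)^T (\mathbf{e}_i \otimes \mathbf{e}_i) \nabla_\Gamma v$. Combining the two and absorbing the remaining terms of order $d^2$, $d|\bnu - \bN|$, and $|\bnu-\bN|^2$ (each $O(h^{2k})$ by \eqref{e:d_estim}) into an $O(h^{2k})$ term, then subtracting $\widetilde A(v,v)$, produces \eqref{StiffBound}.

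The main technical obstacle is the stiffness expansion: one must simultaneously track two small quantities, $d \lesssim h^{k+1}$ and $\bnu - \bN \lesssim h^k$, and show that all cross terms and products beyond the explicit linear-in-$d$ contributions remain $O(h^{2k})$. In particular, the $\bnu - \bN$ factor enters quadratically (being orthogonal to leading-order tangential vectors in the pertinent projections), which is precisely what allows the $O(h^{2k})$ accounting rather than the cruder $O(h^{k+1})$ bound from Corollary~\ref{c:geom}. The mass case \eqref{massbound} is substantially simpler because no gradients or projections appear.
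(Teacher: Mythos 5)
Your proposal takes essentially the same route as the paper: change variables to express both $a(v,v)$ and $m(v,v)$ as integrals over $\Gamma$, use the structure $D\bpsi = (I-d\mathbf{H})P$ (equivalently, the Dziuk/Demlow--Dziuk identities for $\nabla_\gamma v$ in terms of $\nabla_\Gamma v$ and for $Q$), and exploit the crucial fact that $1-\bnu\cdot\bN = \tfrac12|\bnu-\bN|^2=O(h^{2k})$ so that the normal discrepancy only enters quadratically. The one place you are slightly imprecise is the gradient transformation: the correct pullback uses the oblique projection $\mathbf{I}-\tfrac{\bN\otimes\bnu}{\bN\cdot\bnu}$ rather than $P=\mathbf{I}-\bnu\otimes\bnu$, and the paper's key algebraic step is showing that when this is composed with its transpose and $(\mathbf{I}-d\mathbf{H})^{-T}(\mathbf{I}-d\mathbf{H})^{-1}$, one gets $P_\Gamma + 2\sum_i d\kappa_i\,\mathbf{e}_i\otimes\mathbf{e}_i + O(h^{2k})$ because the leftover vector $\bN-\tfrac{\bnu}{\bN\cdot\bnu}$ is itself $O(h^k)$ and appears as a rank-one square; but you correctly identify this as the technical core and the intuition you give ("the $\bnu-\bN$ factor enters quadratically, being orthogonal to leading-order tangential vectors") is exactly what that computation realizes, so the proof would go through.
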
	

\begin{proof}
	We shall need the two identities from \cite{DD07}:
	\begin{align}
		\nabla_{\gamma}v(x) &= [(\mathbf{I} - d\mathbf{H})(x)]^{-1}\left[ \mathbf{I} - \frac{\mathbf{N}\otimes\mathbf{\nu}}{\mathbf{N}\cdot\mathbf{\nu}} \right]\nabla_{\Gamma}v,
		\label{GradId}
\\
		d\sigma &= \mathbf{\nu}\cdot\mathbf{N}\left[\prod_{i=1}^n\left(1-d(x)\frac{\kappa_i(\mathbf{\psi}(x))}{1 + d(x)\kappa_i(\mathbf{\psi}(x))}\right)\right]d\Sigma := Qd\Sigma.
		\label{Jacob}
	\end{align}
	We note that since $|1-\nu\cdot \mathbf{N}| = \frac{1}{2}|\nu-\mathbf{N}|^2\lesssim h^{2k}$ and $\|d\|_{L_\infty(\Gamma)}\lesssim h^{k+1}$,
	\begin{equation}
	Q = (1-d\mathcal{H}) + O(h^{2k}).
	\label{appJacob}
	\end{equation}
Using \eqref{GradId} and \eqref{appJacob} we then have
\begin{align}
\begin{aligned}
	|a(v,v)& -\widetilde{A}(v,v)| = \left|\int_{\gamma}[\nabla_\gamma v]^T\nabla_\gamma v d\sigma - \int_{\Gamma} [\nabla_\Gamma v]^T\nabla_\Gamma v d\Sigma\right|\\
	&\leq \bigg|\int_{\Gamma}\left[\nabla_{\Gamma}v\right]^T\left[ \mathbf{I} - \frac{\mathbf{\nu}\otimes\mathbf{N}}{\mathbf{N}\cdot\mathbf{\nu}}\right][[(\mathbf{I} - d\mathbf{H})(x)]^{-1}]^T[(\mathbf{I} - d\mathbf{H})(x)]^{-1}\\
	&\times\left[ \mathbf{I} - \frac{\mathbf{N}\otimes\mathbf{\nu}}{\mathbf{N}\cdot\mathbf{\nu}} \right]
	\nabla_{\Gamma}v \left[1-d(x)\mathcal{H}\right] 
	- [\nabla_\Gamma v]^T \nabla_\Gamma vd\Sigma\bigg|
	+O(h^{2k}).
\end{aligned}
\label{One}	
\end{align}
	Expanding the Hessian $\mathbf{H}$ as on page 425 of \cite{DD07}, we obtain:
	$$
	[(\mathbf{I} - d\mathbf{H})(x)]^{-1} = \mathbf{\nu}\otimes\mathbf{\nu} + \sum_{i=1}^n[1 + d(x)\kappa_i(\mathbf{\psi}(x))]\mathbf{e}_i\otimes\mathbf{e}_i
	= \mathbf{I} + \sum_{i=1}^nd(x)\kappa_i(\mathbf{\psi}(x))\mathbf{e}_i\otimes\mathbf{e}_i.
	$$
Using $\mathbf{e}_i\perp\mathbf{\nu}$ and $\mathbf{e}_i\perp\mathbf{e}_j$, $1\leq i,j\leq n$, yields
\begin{align*}
	[[(\mathbf{I} - d\mathbf{H})(x)]^{-1}]^T[(\mathbf{I} - d\mathbf{H})(x)]^{-1} =\mathbf{I} + 2\sum_{i=1}^nd(x)\kappa_i(\mathbf{\psi}(x))\mathbf{e}_i\otimes\mathbf{e}_i + O(h^{2k+2}).
\end{align*}
Combining the above and carrying out a short calculation yields
\begin{align*}
	&\left[ \mathbf{I} - \frac{\mathbf{\nu}\otimes\mathbf{N}}{\mathbf{N}\cdot\mathbf{\nu}}\right][[(\mathbf{I} - d\mathbf{H})(x)]^{-1}]^T[(\mathbf{I} - d\mathbf{H})(x)]^{-1}\left[\mathbf{I} - \frac{\mathbf{N}\otimes\mathbf{\nu}}{\mathbf{N}\cdot\mathbf{\nu}} \right]\\
	&=
	\left[ \mathbf{I} - \frac{\mathbf{\nu}\otimes\mathbf{N}}{\mathbf{N}\cdot\mathbf{\nu}}\right][\mathbf{I} + 2\sum_{i=1}^nd(x)\kappa_i(\mathbf{\psi}(x))\mathbf{e}_i\otimes\mathbf{e}_i]\left[ \mathbf{I} - \frac{\mathbf{N}\otimes\mathbf{\nu}}{\mathbf{N}\cdot\mathbf{\nu}} \right] + O(h^{2k})\\
	&=
	 \mathbf{I} - \frac{\mathbf{\nu}\otimes\mathbf{N}}{\mathbf{N}\cdot\mathbf{\nu}} - \frac{\mathbf{N}\otimes\mathbf{\nu}}{\mathbf{N}\cdot\mathbf{\nu}} + \frac{\mathbf{\nu}\otimes\mathbf{\nu}}{(\mathbf{N}\cdot\mathbf{\nu})^2}\\
	&+ 2\sum_{i=1}^nd(x)\kappa_i(\mathbf{\psi}(x))\left[ \mathbf{e}_i\otimes\mathbf{e}_i - \frac{\mathbf{N}\cdot \mathbf{e}_i}{\mathbf{N}\cdot\mathbf{\nu}}\left(\mathbf{\nu}\otimes\mathbf{e}_i
	+\mathbf{e}_i\otimes\mathbf{\nu}\right)
	+ \left(\frac{\mathbf{N}\cdot \mathbf{e}_i}{\mathbf{N}\cdot\mathbf{\nu}}\right)^2\mathbf{\nu}\otimes\mathbf{\nu}\right]
	\\
	&+ O(h^{2k}).
\end{align*}
Let $P_\Gamma:=\mathbf{I}-\mathbf{N}\otimes\mathbf{N}$. Then
\begin{align*}
 \mathbf{I} - \frac{\mathbf{\nu}\otimes\mathbf{N}}{\mathbf{N}\cdot\mathbf{\nu}} - \frac{\mathbf{N}\otimes\mathbf{\nu}}{\mathbf{N}\cdot\mathbf{\nu}} + \frac{\mathbf{\nu}\otimes\mathbf{\nu}}{(\mathbf{N}\cdot\mathbf{\nu})^2}
 &=
 P_\Gamma + \left(\mathbf{N} - \frac{\mathbf{\nu}}{\mathbf{N}\cdot\mathbf{\nu}}\right)\otimes\left(\mathbf{N} - \frac{\mathbf{\nu}}{\mathbf{N}\cdot\mathbf{\nu}}\right)
\\&=
 P_\Gamma + O(h^{2k}).
\end{align*}
We know $\|\mathbf{N}-\mathbf{\nu}\|_\infty \lesssim h^{k}$, so $\mathbf{N}\cdot \mathbf{e}_i = O(h^{k})$ which means all terms containing $d(x)\mathbf{N}\cdot \mathbf{e}_i$ are of order $h^{2k+1}$. Therefore we have
\begin{align}
\begin{aligned}
&\left[ \mathbf{I} - \frac{\mathbf{\nu}\otimes\mathbf{N}}{\mathbf{N}\cdot\mathbf{\nu}}\right][[(\mathbf{I} - d\mathbf{H})(x)]^{-1}]^T[(\mathbf{I} - d\mathbf{H})(x)]^{-1}\left[ \mathbf{I} - \frac{\mathbf{N}\otimes\mathbf{\nu}}{\mathbf{N}\cdot\mathbf{\nu}} \right]\\
&=P_\Gamma
+ 2\sum_{i=1}^nd(x)\kappa_i(\mathbf{\psi}(x))\left[ \mathbf{e}_i\otimes\mathbf{e}_i\right] 
+ O(h^{2k}).
\end{aligned}
\label{Guts}
\end{align}
Multiplying equations \eqref{Guts} and \eqref{appJacob} gives
\begin{align*}
	&\left[ \mathbf{I} - \frac{\mathbf{\nu}\otimes\mathbf{N}}{\mathbf{N}\cdot\mathbf{\nu}}\right][[(\mathbf{I} - d\mathbf{H})(x)]^{-1}]^T[(\mathbf{I} - d\mathbf{H})(x)]^{-1}\left[ \mathbf{I} - \frac{\mathbf{N}\otimes\mathbf{\nu}}{\mathbf{N}\cdot\mathbf{\nu}} \right]
	Q\\
	&=P_\Gamma(1-d(x)\mathcal{H})
	+2\sum_{i=1}^nd(x)\kappa_i(\mathbf{\psi}(x))\left[ \mathbf{e}_i\otimes\mathbf{e}_i 
	\right] + O(h^{2k}).
\end{align*}
Inserting the above into \eqref{One} and noting that $P_\Gamma\nabla_\Gamma v = \nabla_\Gamma v$ yields 
\begin{align*}
	|a(v,v)-\widetilde{A}(v,v)|
	&\leq \left|\int_{\Gamma}d(x)\mathcal{H}\left|\nabla_{\Gamma}v\right|^2d\Sigma\right|\\
	&+2\left|\int_{\Gamma}\left(\sum_{i=1}^nd(x)\kappa_i(\mathbf{\psi}(x))\left[\nabla_{\Gamma}v\right]^T\left[ \mathbf{e}_i\otimes\mathbf{e}_i 
	\right]\nabla_\Gamma v\right) d\Sigma\right| + O(h^{2k}).
\end{align*}
This is \eqref{StiffBound}.	The proof of \eqref{massbound} follows directly from \eqref{appJacob}.
\end{proof}	

We next define a quadrature rule on the reference element:
$$
\int_{\hat{T}}\hat{\varphi}(\hat{x})d\hat{\Sigma}\approx \sum_{i=1}^L \hat{w}_i\hat{\varphi}(\hat{q}_i),
$$
where $\{\hat{w}_j\}_{j=1}^L$ are weights and $\{\hat{q}_j\}_{j=1}^L$ is a set of quadrature points. Recall the definition \eqref{effT} of $\bF_T:\hat{T} \rightarrow T$.  
The mapped rule on a physical element $T \subset \Gamma$ is
$$
\int_{T}\varphi(x)d\Sigma\approx \sum_{i=1}^L w_i\varphi(q_i),
$$
where $w_i = Q_{\bF_T}(\hat{q}_i)\hat{w}_i$,  $Q_{\bF_T} = \sqrt{\det(J^TJ)}$ with $J$ the Jacobian matrix of $\bF_T$, and $q_i = \bF_T(\hat{q}_i)$.  The quadrature errors on the unit and physical elements are
\begin{equation}
E_{\hat{T}}(\varphi) := \int_{\hat{T}}\hat{\varphi}(\hat{x})d\hat{\Sigma} - \sum_{i=1}^L \hat{w}_i\hat{\varphi}(\hat{q}_i), \qquad
E_T(\varphi) := \int_{T}\varphi(x)d\Sigma - \sum_{i=1}^L w_i\varphi(q_i).
\label{QuadRule}
\end{equation}

We say that a mapping $\bF_T$ is {\it regular} if $|\bF_T|_{W^{i,\infty}(\hat{T})}\leq h^i$, $0 \le i \le k$. This is implied by assumption \eqref{e:lift_estim}.  Note also that $|\bF_T|_{W^{i,\infty}(\hat{T})}=0$, $i >k$.  
\begin{lemma}\label{QuadBound}
Suppose $E_{\hat{T}}(\hat{\chi})=0$ $\forall \hat{\chi}\in \mathbb{P}^{\ell-1}(\hat{T})$, $d\in W^{\ell,\infty}(T)$, and $\bF_T$ is a regular mapping. Then there is a constant $C$, independent of $T$, such that
\begin{equation}
	|E_T(d\varphi\psi)|\leq C\|d\|_{W^{\ell,\infty}(T)}h^{\ell}|\varphi|_{H^{\min\{r,\ell\}}(T)}|\psi|_{H^{\min\{r,\ell\}}(T)}, \quad \forall \hat{\varphi},\hat{\psi}\in\mathbb{P}^r(\hat{T}).
\end{equation}
\end{lemma}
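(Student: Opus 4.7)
The plan is a classical Bramble--Hilbert/Ciarlet argument adapted to our parametric surface elements. The first step is to pull everything back to the reference element: writing $\hat f := f\circ \bF_T$, the change of variables in \eqref{QuadRule} gives
\begin{equation*}
E_T(d\varphi\psi) \;=\; E_{\hat T}\bigl(\hat d\,\hat\varphi\,\hat\psi\,Q_{\bF_T}\bigr).
\end{equation*}
Because $E_{\hat T}$ is a bounded linear functional on $W^{\ell,\infty}(\hat T)$ (with norm depending only on the fixed reference element and the quadrature nodes) that annihilates $\mathbb{P}^{\ell-1}(\hat T)$ by the exactness hypothesis, the Bramble--Hilbert lemma yields
\begin{equation*}
|E_T(d\varphi\psi)| \;\lesssim\; |\hat d\,\hat\varphi\,\hat\psi\,Q_{\bF_T}|_{W^{\ell,\infty}(\hat T)}.
\end{equation*}

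The second step is to expand this seminorm by Leibniz into a finite sum indexed by $\alpha_1+\alpha_2+\alpha_3+\alpha_4 = \ell$ of products $|\hat d|_{W^{\alpha_1,\infty}(\hat T)}|\hat\varphi|_{W^{\alpha_2,\infty}(\hat T)}|\hat\psi|_{W^{\alpha_3,\infty}(\hat T)}|Q_{\bF_T}|_{W^{\alpha_4,\infty}(\hat T)}$, followed by three reductions. (i) Since $\hat\varphi,\hat\psi\in\mathbb{P}^r(\hat T)$, terms with $\alpha_2>r$ or $\alpha_3>r$ drop out, confining those indices to $\{0,\ldots,\min(r,\ell)\}$. (ii) The regularity hypothesis $|\bF_T|_{W^{i,\infty}(\hat T)} \lesssim h^i$, together with the vanishing of these seminorms for $i>k$, fed through the Fa\`a di Bruno formula produces $|\hat d|_{W^{\alpha_1,\infty}(\hat T)} \lesssim h^{\alpha_1}\|d\|_{W^{\alpha_1,\infty}(T)}$ and an analogous bound for $Q_{\bF_T}$, which is a smooth algebraic function of $D\bF_T$. (iii) The chain-rule pullback $|\hat\varphi|_{W^{\alpha_2,\infty}(\hat T)} \lesssim h^{\alpha_2}|\varphi|_{W^{\alpha_2,\infty}(T)}$ together with a polynomial inverse inequality $|\varphi|_{W^{\alpha_2,\infty}(T)} \lesssim h^{-D/2}|\varphi|_{H^{\min(r,\ell)}(T)}$ expresses the polynomial factors in the stated $L^2$-based seminorm, and likewise for $\hat\psi$. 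Multiplying the $h$-powers collected in (ii)--(iii) produces $h^{\alpha_1+\alpha_2+\alpha_3+\alpha_4} = h^\ell$, and the bound $\|d\|_{W^{\alpha_1,\infty}(T)} \le \|d\|_{W^{\ell,\infty}(T)}$ completes the estimate.

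The main obstacle is the combinatorial bookkeeping at the intersection of steps (ii) and (iii). One must verify simultaneously that (a) each derivative of $\bF_T$ produced by Fa\`a di Bruno contributes exactly one power of $h$ via the regularity assumption, and that derivatives of $\bF_T$ of order exceeding $k$ never appear (they do not, since $\hat d$ and $Q_{\bF_T}$ depend on only finitely many derivatives of $\bF_T$, all covered by the regularity bound); (b) the assembled $h$-powers from the Leibniz sum, the chain rule on $\hat d$ and $Q_{\bF_T}$, and the polynomial scalings combine to exactly $h^\ell$ regardless of how $\ell$ is partitioned among the four factors; and (c) the polynomial inverse inequality is applied precisely enough to recover the stated $H^{\min(r,\ell)}(T)$-seminorm rather than a full Sobolev norm. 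Once these are pinned down, assembling the factors over all admissible partitions gives the claimed bound.
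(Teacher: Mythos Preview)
Your proposal follows essentially the same route as the paper: pull back to the reference element, apply Bramble--Hilbert using the exactness hypothesis, expand by Leibniz, and scale. The paper groups $d(\bF_T)Q_{\bF_T}$ as a single factor in the Leibniz expansion rather than splitting into four, but this is cosmetic.

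There is, however, a real flaw in your step (iii). The proposed bound $|\varphi|_{W^{\alpha_2,\infty}(T)} \lesssim h^{-D/2}|\varphi|_{H^{\min(r,\ell)}(T)}$ is false as stated: take $\alpha_2=0$ and $\varphi$ constant, so the left side is nonzero while the right side vanishes. More structurally, $\varphi$ is not a polynomial on the curved physical element $T$ (only $\hat\varphi$ is polynomial on $\hat T$), so invoking a ``polynomial inverse inequality'' on $T$ is already suspect. The paper instead applies equivalence of norms on the finite-dimensional space $\mathbb{P}^r(\hat T)$ \emph{on the reference element}, converting $|\hat\varphi|_{W^{i,\infty}(\hat T)}$ to $|\hat\varphi|_{H^{i}(\hat T)}$ at the \emph{same} order $i$, and only then scales to $T$. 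This produces a sum of terms $|\varphi|_{H^{i}(T)}$ over $0\le i\le \min(r,\ell)$, i.e.\ the full Sobolev norm $\|\varphi\|_{H^{\min(r,\ell)}(T)}$ rather than the seminorm. Indeed the paper's own proof concludes with full norms, despite the seminorms written in the statement; the subsequent application in Lemma~\ref{bounds-H} tolerates this. Your concern (c) is therefore well-founded, and the fix is to perform the norm equivalence on $\hat T$ before pushing forward, accepting the full norm on $T$.
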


\begin{proof}
We use standard steps from basic finite element theory \cite{Ciar02}.  For each $T$,
	\begin{equation}
		E_T(d\varphi\psi) =E_{\hat{T}}\left(d(\bF_T)Q_{\bF_T}\hat{\varphi}\hat{\psi}\right).
		\label{QuadErr}
	\end{equation}
	Since $E_{\hat{T}}(\hat{\chi})=0, \forall \hat{\chi}\in \mathbb{P}^{\ell-1}(\hat{T})$, it follows from the Bramble-Hilbert Lemma and \eqref{QuadRule} that 
	$$
	|E_{\hat{T}}(\hat{g})| = \inf_{\chi\in \mathbb{P}^{\ell-1}}|E_{\hat{T}}(\hat{g}-\chi)| \leq \inf_{\chi\in \mathbb{P}^{\ell-1}}\|\hat{g}-\chi\|_{L_\infty(\hat{T})}\leq \hat{C}|\hat{g}|_{W^{\ell,\infty}(\hat{T})}.
	$$
	Substituting $\hat{g}=d(\bF_T)Q_{\bF_T}\hat{\varphi}\hat{\psi}$, we thus have
	$$
	\left|E_{\hat{T}}\left(d(\bF_T)Q_{\bF_T}\hat{\varphi}\hat{\psi}\right)\right|\leq \hat{C}\left|d(\bF_T)Q_{\bF_T}\hat{\varphi}\hat{\psi}\right|_{W^{\ell,\infty}(\hat{T})}.
	$$
	We now apply equivalence of norms over finite dimensional spaces and scaling arguments noting that $D^\alpha\hat{\varphi}=D^\alpha\hat{\psi}=0$ for $|\alpha|>r$ to get 
	\begin{align*}
	\left|d(\bF_T)Q_{\bF_T}\hat{\varphi}\hat{\psi}\right|_{W^{\ell,\infty}(\hat{T})} &\leq \sum_{\substack{i,j=0\\ \ell-i-j\geq 0}}^{\min\{r,\ell\}}\left|d(\bF_T)Q_{\bF_T}\right|_{W^{\ell-i-j,\infty}(\hat{T})}|\hat{\varphi}|_{W^{i,\infty}(\hat{T})}|\hat{\psi}|_{W^{j,\infty}(\hat{T})}
\\
	&\lesssim \sum_{\substack{i,j=0\\ \ell-i-j\geq 0}}^{\min\{r,\ell\}}\left|d(\bF_T)Q_{\bF_T}\right|_{W^{\ell-i-j,\infty}(\hat{T})}|\hat{\varphi}|_{H^{i}(\hat{T})}|\hat{\psi}|_{H^{j}(\hat{T})}.
	\end{align*}
Through standard arguments we have
$$	
|\hat{\varphi}|_{H^{i}(\hat{T})}|\hat{\psi}|_{H^{j}(\hat{T})}
\lesssim 
h^{i+j}\|Q_{\bF_T^{-1}}\|_{L_{\infty}(T)}|\hat{\varphi}|_{H^{i}(T)}|\hat{\psi}|_{H^{j}(T)}.
$$
Noting that $\left|Q_{\bF_T}\right|_{W^{k,\infty}(\hat{T})}\lesssim h^{n+j}$ and $\|Q_{\bF_T^{-1}}\|_{L_{\infty}(T)}\lesssim h^{-n}$	along with 
	$$
	\left|d(\bF_T)Q_{\bF_T}\right|_{W^{\ell-i-j,\infty}(\hat{T})}
	\lesssim
	\sum_{k=0}^{\ell-i-j}\left|Q_{\bF_T}\right|_{W^{k,\infty}(\hat{T})}\left|d(\bF_T)\right|_{W^{\ell-i-j-k,\infty}(\hat{T})}
	$$
and	
	$$
	\left|d(\bF_T)\right|_{W^{\ell-i-j-k,\infty}(\hat{T})}
	\lesssim
	h^{\ell-i-j-k}\left\|d\right\|_{W^{\ell-i-j-k,\infty}(T)}
	$$
gives
$$
\left|d(\bF_T)Q_{\bF_T}\hat{\varphi}\hat{\psi}\right|_{W^{\ell,\infty}(\hat{T})}
\lesssim
h^{\ell}\|d\|_{W^{\ell,\infty}(\Omega)}\|\varphi\|_{H^{\min\{r,\ell\}}(T)}\|\psi\|_{H^{\min\{r,\ell\}}(T)},
$$	
which is the desired result.
\end{proof}

We now consider the effects of constructing $\Gamma$ by interpolating $\bpsi$.  
\begin{lemma}[Superconvergent Geometric Consistency] \label{bounds-H}	
Let $\text{QUAD}_{\hat{T}}$ be a degree $\ell-1$, $R$ point quadrature rule on the unit element with quadrature points $\{\hat{q}_i\}_{i=1}^R$, $V\in \Vh^r(\Gamma)$ be degree-$r$ function, and assume that $d(x)\mathcal{H}\in W^{\ell,\infty}(\mathcal{N})$. If the points $\{\bL(x^j)\}_{j=1}^{n_k}$ in \eqref{d:interpolated_lift} and $\{q_i\}_{i=1}^L$ coincide and in addition $\bL(x^j)=\bpsi(x^j)$, then
	\begin{align}
		|a(V,V)-\widetilde{A}(V,V)|&\leq
		h^\ell\left \|d(x)\mathcal{H}\right \|_{W_{\mathcal{T}}^{\ell,\infty}(\Gamma)}
		\left | V\right |_{H_{\mathcal{T}}^{\min\{r,\ell\}}(\Gamma)}^2+ O(h^{2k}),
	\label{stiff-H}	
\\
		|m(V,V)-\widetilde{M}(V,V)|&\lesssim h^{\ell}\left\|d(x)\mathcal{H}\right\|_{W_{\mathcal{T}}^{\ell,\infty}(\Gamma)}|V|_{H_{\mathcal{T}}^{\min\{r,\ell\}}(\Gamma)}^2 + O(h^{2k}).
	\label{mass-H}	
	\end{align}
\end{lemma}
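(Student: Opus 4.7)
The plan is to reduce, via the pointwise bounds \eqref{StiffBound}--\eqref{massbound} of the previous lemma, the task to controlling three integrals over $\Gamma$ that each carry the factor $d(x)$, and then to exhibit those integrals as quadrature errors of $\mathrm{QUAD}_T$. Under the hypotheses of the lemma, the quadrature sums vanish identically, so the integrals coincide with the quadrature errors and Lemma \ref{QuadBound} yields the $h^\ell$ bound.

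First I invoke \eqref{StiffBound} and \eqref{massbound}, which leave me with integrals of the form $\int_\Gamma d(x) g(x) V^2\, d\Sigma$ and $\int_\Gamma d(x) [\nabla_\Gamma V]^T B(x) \nabla_\Gamma V\, d\Sigma$, where $g$ and $B$ are smooth geometric quantities built from $\mathcal{H}$, the $\kappa_i$ and $\mathbf{e}_i\otimes\mathbf{e}_i$. I split each integral as $\sum_{T \in \mathcal T}\int_T$ and on a fixed $T$ apply $\mathrm{QUAD}_T$. The hypotheses that $\{q_i\}=\{\bL(x^j)\}$ and $\bL(x^j)=\bpsi(x^j)\in\gamma$ force $d(q_i)=0$ at every node, so the quadrature sum of any integrand carrying the factor $d(x)$ is identically zero. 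Hence $\int_T d(x)(\cdot)\, d\Sigma = E_T(d(x)(\cdot))$, and the problem is reduced to estimating the quadrature error.

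Finally I invoke Lemma \ref{QuadBound} termwise. For the mass integrand $(d\mathcal{H})\cdot V\cdot V$, we have $\hat V\in\mathbb P^r(\hat T)$ and the lemma applies verbatim once the smooth factor $d\mathcal{H}$ plays the role of $d$ (the Bramble--Hilbert argument in its proof uses only regularity of the multiplier). This gives the per-element bound $h^\ell\|d\mathcal{H}\|_{W^{\ell,\infty}(T)}|V|_{H^{\min\{r,\ell\}}(T)}^2$; summing over $T$ yields \eqref{mass-H}. For the stiffness integrands the integrand is not literally of the form $d\varphi\psi$ treated by Lemma \ref{QuadBound}, since tangential gradients of $V$ are not polynomials in reference coordinates. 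To handle this I pull back to $\hat T$ and expand $[\nabla_\Gamma V]^T B(x)\nabla_\Gamma V\, Q_{\bF_T}(\hat x) = \sum_{i,j} M_{ij}(\hat x)(\partial_i\hat V)(\partial_j\hat V)$, where $\partial_i\hat V\in\mathbb P^{r-1}(\hat T)$ and $M_{ij}$ absorbs $d(x)\,B$, the inverse first fundamental form and the Jacobian. A direct repetition of the Bramble--Hilbert argument of Lemma \ref{QuadBound}, with $M_{ij}$ as the smooth multiplier (its $W^{\ell,\infty}(\hat T)$ seminorms being controlled by shape-regularity of $\mathcal T$, since $\bF_T$ is polynomial of degree $k$ with derivatives of order $>k$ vanishing), yields the same $h^\ell$ per-element bound; summation over $T$ then gives \eqref{stiff-H}. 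The main obstacle is this last step: the surface-gradient structure forces the termwise reference-coordinate decomposition and careful bookkeeping of metric factors, rather than a direct appeal to Lemma \ref{QuadBound}.
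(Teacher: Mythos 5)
Your proof follows the paper's argument exactly: reduce via the preceding lemma to integrals over $\Gamma$ carrying the factor $d(x)$, split into elements, observe that $d$ vanishes at the quadrature points so each integral equals its quadrature error $E_T$, and invoke Lemma \ref{QuadBound} elementwise. The paper dispatches \eqref{stiff-H} with ``similar arguments,'' and you are right that the stiffness integrand is not literally of the form $d\varphi\psi$ with polynomial reference-element pullbacks; your reference-coordinate decomposition, absorbing the metric factors into the smooth multiplier and taking $\partial_i\hat V\in\mathbb P^{r-1}(\hat T)$ as the polynomial factors, is precisely the bookkeeping the paper leaves implicit.
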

Here a subscript $\mathcal{T}$ denotes a broken (elementwise) version of the given norm. 
\begin{proof}
	We prove \eqref{mass-H}.  \eqref{stiff-H} follows from similar arguments. Recalling \eqref{massbound} and partition the first integral based on the underlying mesh.
	$$
	\left|\int_{\Gamma}V^2d(x)\mathcal{H}d\Sigma\right|
	\leq
	\sum_{j=1}^{\# \space elements}\left|\int_{T_j}V^2d(x)\mathcal{H}d\Sigma\right|.
	$$
	Let $q$ be a quadrature point on $T_j$.  By assumption $\bL(q)=\bpsi(q)$, so $d(q) = 0$ and  
	$$
	\left|\int_{T_j}V^2d(x)\mathcal{H}d\Sigma\right| = \left|\int_{T_j}V^2d(x)\mathcal{H}d\Sigma - \text{QUAD}_{T_j}\left(V^2d(x)\mathcal{H}\right)\right|
	$$
	$$
	=E_{T_j}(d(x)\mathcal{H}V^2)
	\lesssim
	h^{\ell}\|d(x)\mathcal{H}\|_{W_{\mathcal{T}}^{\ell,\infty}(\Gamma)}|V|_{H_{\mathcal{T}}^{\min\{r,\ell\}}(T_j)}^2
	$$
	by Lemma \ref{QuadBound}. Summing over all of the elements yields \eqref{mass-H}.
\end{proof}

\begin{theorem}[Order of eigenvalue error] \label{Superconvergence}
If $\Gamma$ be constructed using interpolation points that correspond to a degree $\ell-1$ quadrature rule as in Lemma \ref{bounds-H}, then 
\begin{equation} \label{theorem_bound}
|\lambda_j-\Lambda_j|\lesssim h^{2r} + h^{2k} + h^\ell.
\end{equation}
\end{theorem}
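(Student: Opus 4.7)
The plan is to assemble the four contributions on the right-hand side of the a priori bound in Lemma~\ref{MacroBound} and to match each one with the appropriate tool proved earlier in the section. Since the statement is only about orders of convergence (not cluster-robust constants), I would fix $j$, take $U_j$ to be the $m$-normalized discrete eigenfunction on $\mathbb{W}_\#$ associated with $\Lambda_j$, and apply Lemma~\ref{MacroBound} as the skeleton of the proof.

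First I would bound the Galerkin-type terms. The terms $\|\bP_{\lambda_j}U_j - U_j\|_a^2$ and $\lambda_j \|\bP_{\lambda_j}U_j - U_j\|_m^2$ are handled directly by Lemma~\ref{P-H}, which yields
\begin{equation*}
\|\bP_{\lambda_j}U_j - U_j\|_a^2 \lesssim (h^r + h^{k+1})^2 \lesssim h^{2r} + h^{2k+2},
\end{equation*}
and an analogous estimate with an extra factor of $h^2$ for the $m$-norm. Both contributions are dominated by $h^{2r}$ (up to higher order in $h^{2k+2}$, which is already absorbed in $h^{2k}$). The spectral constants coming from Lemma~\ref{P-H} are not tracked and fold into $C_\lambda$, consistent with the scope announced at the start of the section.

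Next I would handle the two geometric consistency terms, $\Lambda_j |m(U_j,U_j) - \widetilde{M}(U_j,U_j)|$ and $|\widetilde{A}(U_j,U_j) - a(U_j,U_j)|$. Here the superconvergence lemma (Lemma~\ref{bounds-H}) is the key input: since $U_j \in \mathbb{V}_\#^r$ is a degree-$r$ finite element function on $\Gamma$ and the interpolation points defining $\Gamma$ coincide with a degree-$(\ell-1)$ quadrature rule,
\begin{equation*}
|a(U_j,U_j)-\widetilde{A}(U_j,U_j)| \lesssim h^{\ell} |U_j|^2_{H^{\min\{r,\ell\}}_{\mathcal T}(\Gamma)} + O(h^{2k}),
\end{equation*}
and similarly for the mass difference. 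Since $U_j$ is $m$-normalized and is a discrete eigenfunction, its piecewise $H^s$ seminorms are controlled by powers of $\Lambda_j$ (using inverse estimates on the finite element space if $s > 1$), so these are absorbed into $C_\lambda$. This gives each geometric term a bound of the form $h^\ell + h^{2k}$.

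Summing the four pieces yields the stated bound $|\lambda_j - \Lambda_j| \lesssim h^{2r} + h^{2k} + h^\ell$. The main obstacle I anticipate is a technical bookkeeping one rather than a conceptual one: verifying that the piecewise Sobolev seminorms of $U_j$ on $\Gamma$ in Lemma~\ref{bounds-H} are in fact bounded independently of $h$ up to spectrally dependent constants, which requires inverse estimates on the mapped finite element space and use of Lemma~\ref{P-H} to compare $U_j$ to a smooth continuous eigenfunction. Apart from that, the argument is a clean substitution of the three lemmas into the master inequality \eqref{eigbound}.
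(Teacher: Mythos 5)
Your proof takes the same route as the paper: use Lemma~\ref{MacroBound} as the skeleton, bound the two projection-error terms by Lemma~\ref{P-H}, bound the two geometric consistency terms by Lemma~\ref{bounds-H}, and control the broken Sobolev seminorms of $U_j$ appearing in Lemma~\ref{bounds-H} by comparing to a smooth function and using inverse estimates. The paper records that last step as the bound $\|U\|_{H^k}\lesssim\|\bP_{\lambda_j}U\|_{H^{k+1}}$, obtained by adding and subtracting an interpolant of $\bP_{\lambda_j}U$ and applying an inverse inequality to the discrete difference—exactly the technical point you flag at the end (note that a direct inverse estimate on $U_j$ alone would introduce negative powers of $h$, so the comparison step is essential, as you correctly anticipate).
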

\begin{proof}
Standard arguments (adding and subtracting an interpolant and applying inverse inequalities) yield $\|U\|_{H^k}\lesssim\|\mathbf{P}_{\lambda_j}U\|_{H^{k+1}}$.  Combining Lemma \ref{bounds-H} and Lemma \ref{P-H} into Lemma \ref{MacroBound} completes the proof.
\end{proof}

\begin{remark}
Our proofs carry over to the setting of quadrilateral elements with appropriate modification of the definition of regularity of the mapping $\bF_T$. If Gauss-Lobatto points are used on the faces of $\overline{\Gamma}$ as the Lagrange interpolation points to define the surface $\Gamma$, then the $O(h^\ell)$ term in \eqref{theorem_bound} is the error due to tensor-product $k+1$-point Gauss-Lobatto quadrature, which is exact for polynomials of order $2k-1$. Thus $\ell = 2k$ and $|\lambda_j-\Lambda_j|\lesssim h^{2r} + h^{2k}$. We demonstrate this numerically below. 
\end{remark}
\begin{remark} It follows from \eqref{mass-H} that computation of ${\rm area}(\gamma)$ using quadrature may also be superconvergent.  This has been observed numerically when using deal.ii \cite[Step 10 Tutorial]{BHK:07}.  
\end{remark}

\section{Numerical results for eigenvalue superconvergence} \label{sec7}
In this section we numerically investigate the convergence rate of the geometric term in the eigenvalue estimate of Theorem \ref{Superconvergence}. Using the upper bound we derived as a guide, we set the order $r$ of the PDE approximation so that $h^{2r}$ is higher order in the experiments.  

We first approximated the unit circle using a sequence of polygons with uniform faces.  For higher order approximations we interpolated the circle using equally spaced points and points based on Gauss-Lobatto quadrature. The left plot in Figure \ref{figcirceigs} shows convergence rates for $\lambda_1$ for various choices of $k$ for both spacings. The error when using Gauss-Lobatto points follows a trend of $h^{2k}$ as predicted by our analysis in Section \ref{sec6}. The errors when using equally spaced Lagrange points are  $O(h^{k+1})$ for odd values of $k$ and $O(h^{k+2})$ for even values of $k$. These quadrature errors arise from the Newton-Cotes rule corresponding to standard Lagrange points, yielding for example Simpson's rule with error $O(h^4)=O(h^{k+2})$ when $k=2$.   
 \setlength{\unitlength}{.75cm}
 \begin{figure}[h]
  	\label{figcirceigs}
 	\centering
 	\includegraphics[scale=.29]{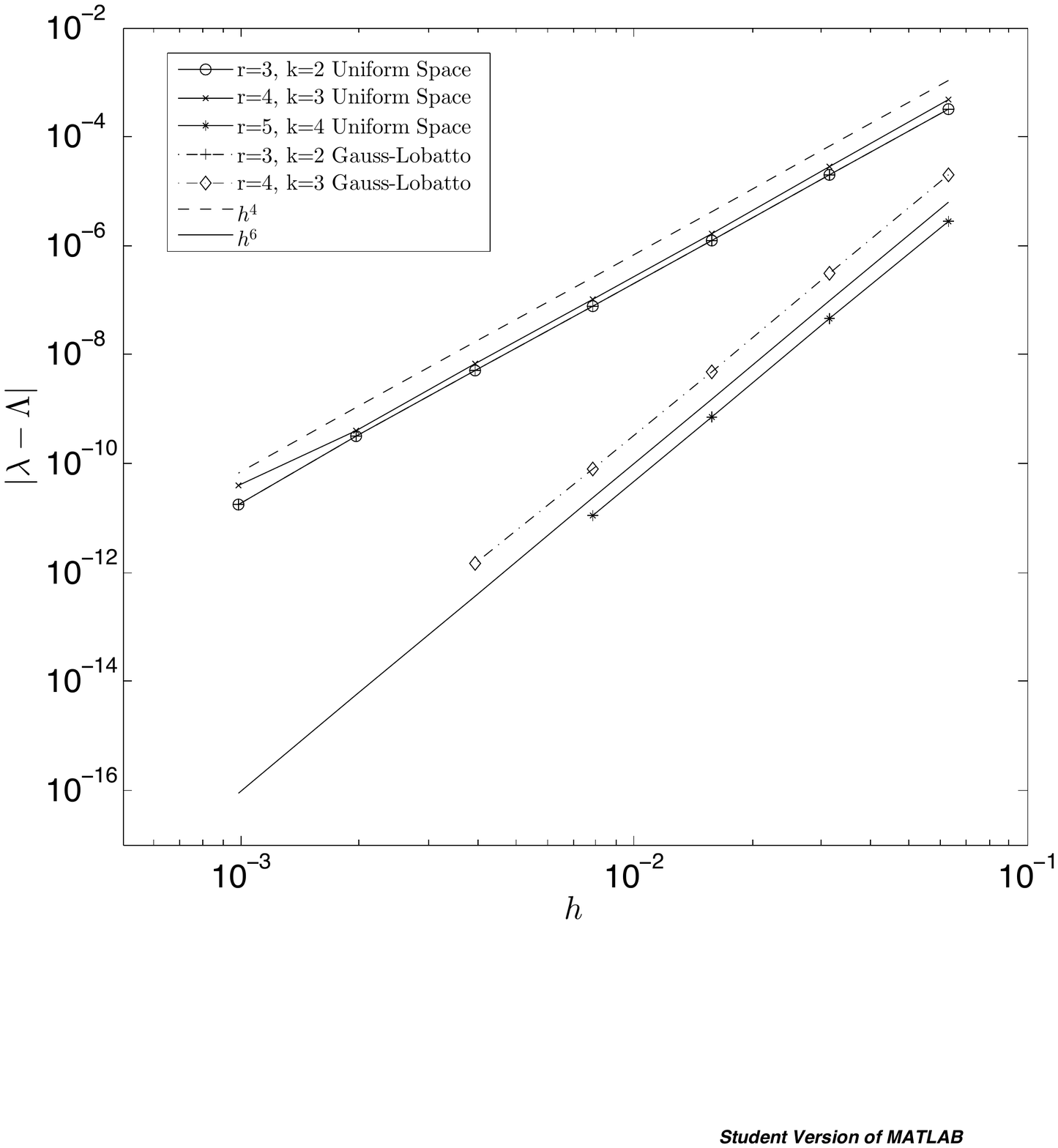}
		\includegraphics[scale=.33]{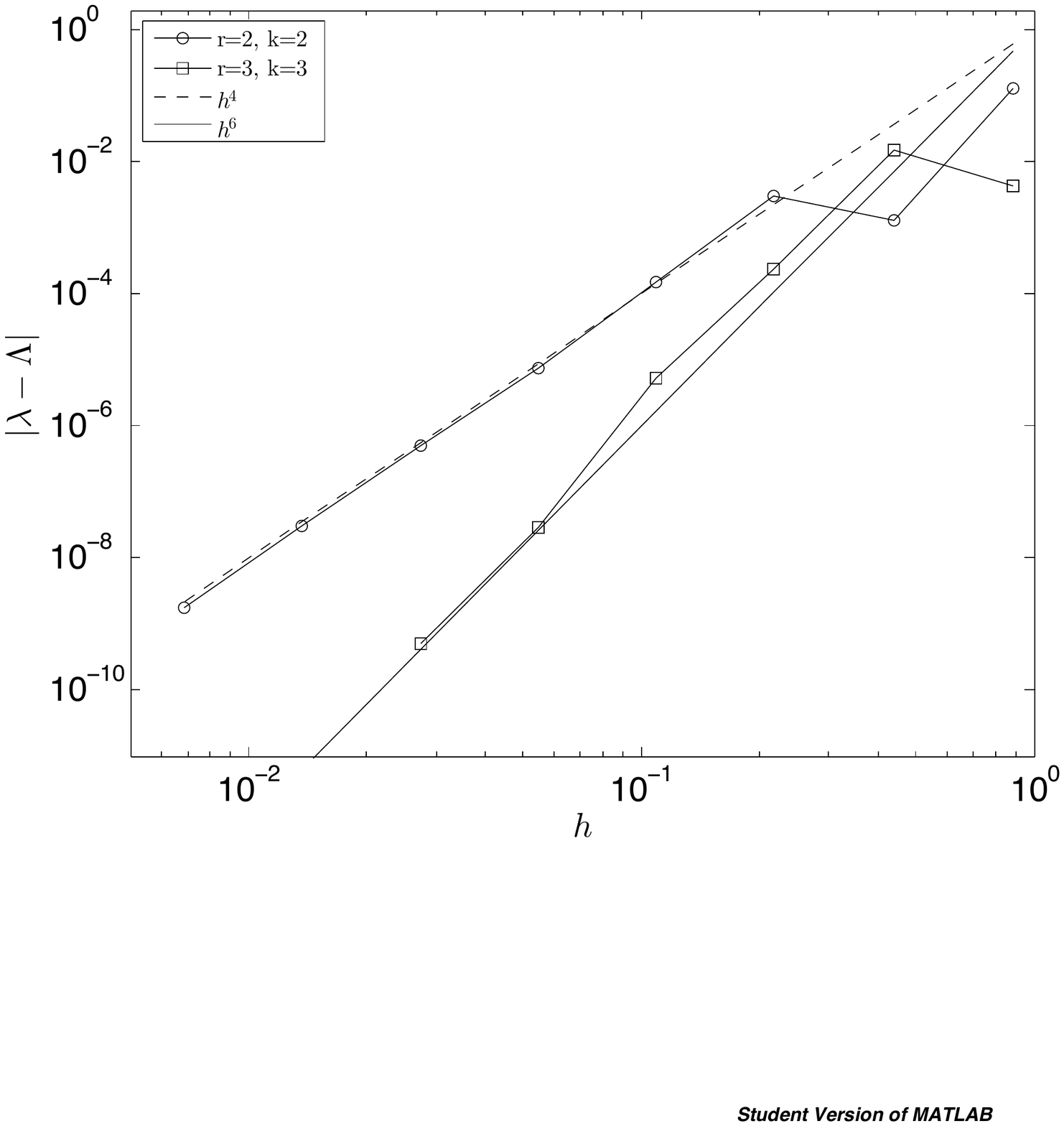}
 	\caption{{\it Left:}  Convergence rates of the first eigenvalue for the circle using typical equally spaced Lagrange basis points and Gauss-Lobatto Lagrange basis points.  {\it Right:} Convergence rates of the first eigenvalue for $(x-z^2)^2 + y^2 + z^2 +\frac{1}{2}(x-0.1)(y+0.1)(z+0.2) - 1 = 0$ surface using a quadrilateral mesh with Gauss-Lobatto Lagrange basis points.}
 \end{figure}
 
In our next experiment we used a quadrilateral mesh to approximate the surface $(x-z^2)^2 + y^2 + z^2 +\frac{1}{2}(x-0.1)(y+0.1)(z+0.2) - 1 = 0$. We used Gauss-Lobatto quadrature points on each face to construct the interpolated surface. Convergence rates for the first eigenvalue using $k=2,3$ are seen in the right plot in Figure \ref{figcirceigs}. The trend of order $h^{2k}$ convergence predicted by our analysis holds for surfaces in 2D when using Gauss-Lobatto interpolation points. Experiments yielding similar convergence rates were also performed on the sphere and torus.

We next investigated convergence on triangular meshes. We first created a triangulated approximation of the level set $(x-z^2)^2 + y^2 + z^2 - 1 = 0$ using standard Lagrange basis points. These points do not correspond to a known higher order quadrature rule.  In the left plot in Figure \ref{fighearteigs}, we see convergence rates of order $h^{k+1}$ for odd values of $k$ and $h^{k+2}$ for even values of $k$.  Unlike in one space dimension, these results cannot be directly proved using our framework above.  More subtle superconvergence phenomenon may provide an explanation.  For example, it is easy to show that the Newton-Cotes rule for $k=2$ corresponding to standard Lagrange interpolation points exactly integrates cubic polynomials on any two triangles forming a parallelogram.  It has previously been observed that meshes in which most triangle pairs form approximate parallelograms may lead to superconvergence effects, and it has been argued that many practical meshes fit within this framework; cf. \cite{XZ04}.  
\begin{figure}[h] 	\label{fighearteigs}
	\centering
	\includegraphics[scale=.41]{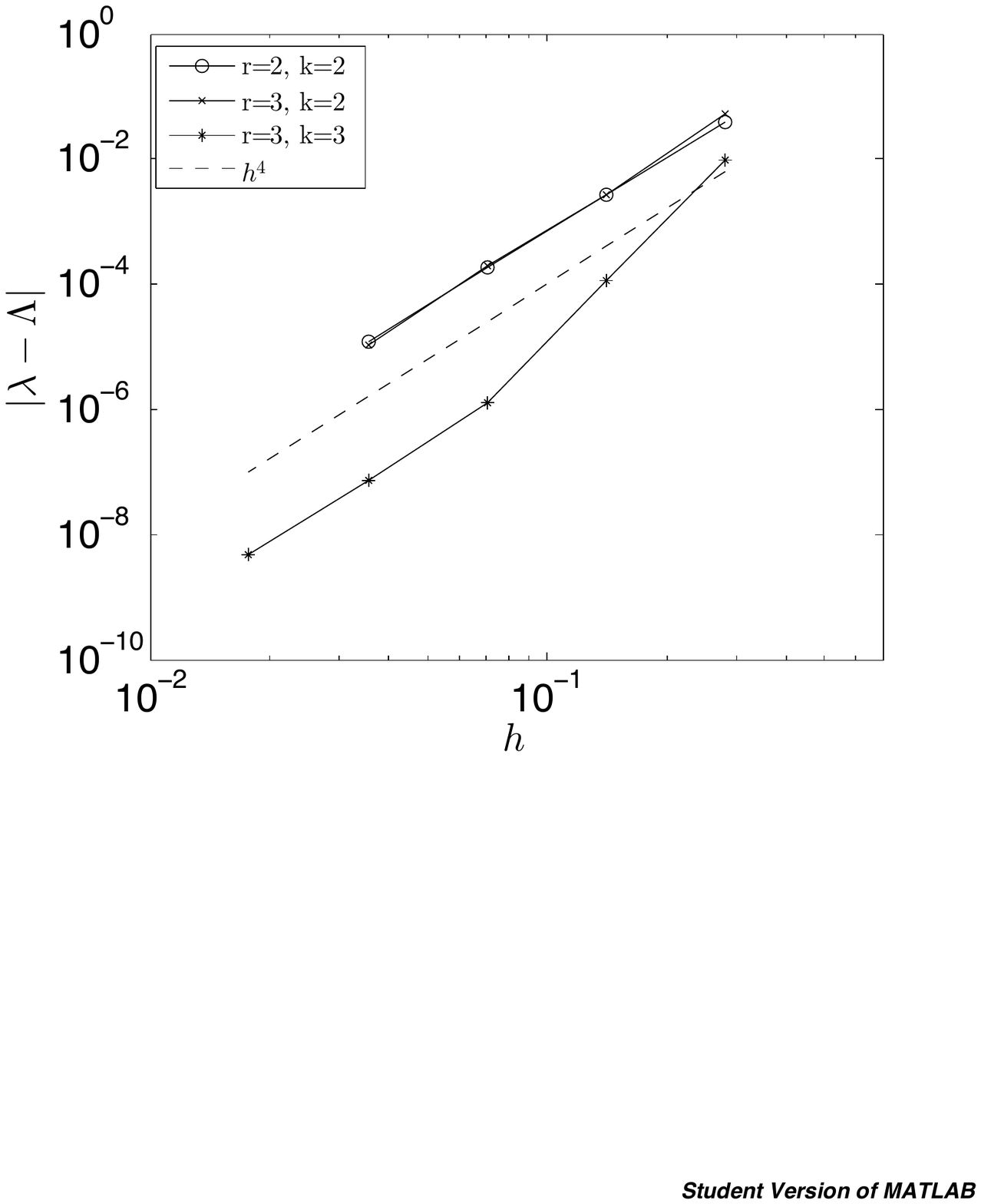}
		\includegraphics[scale=.47]{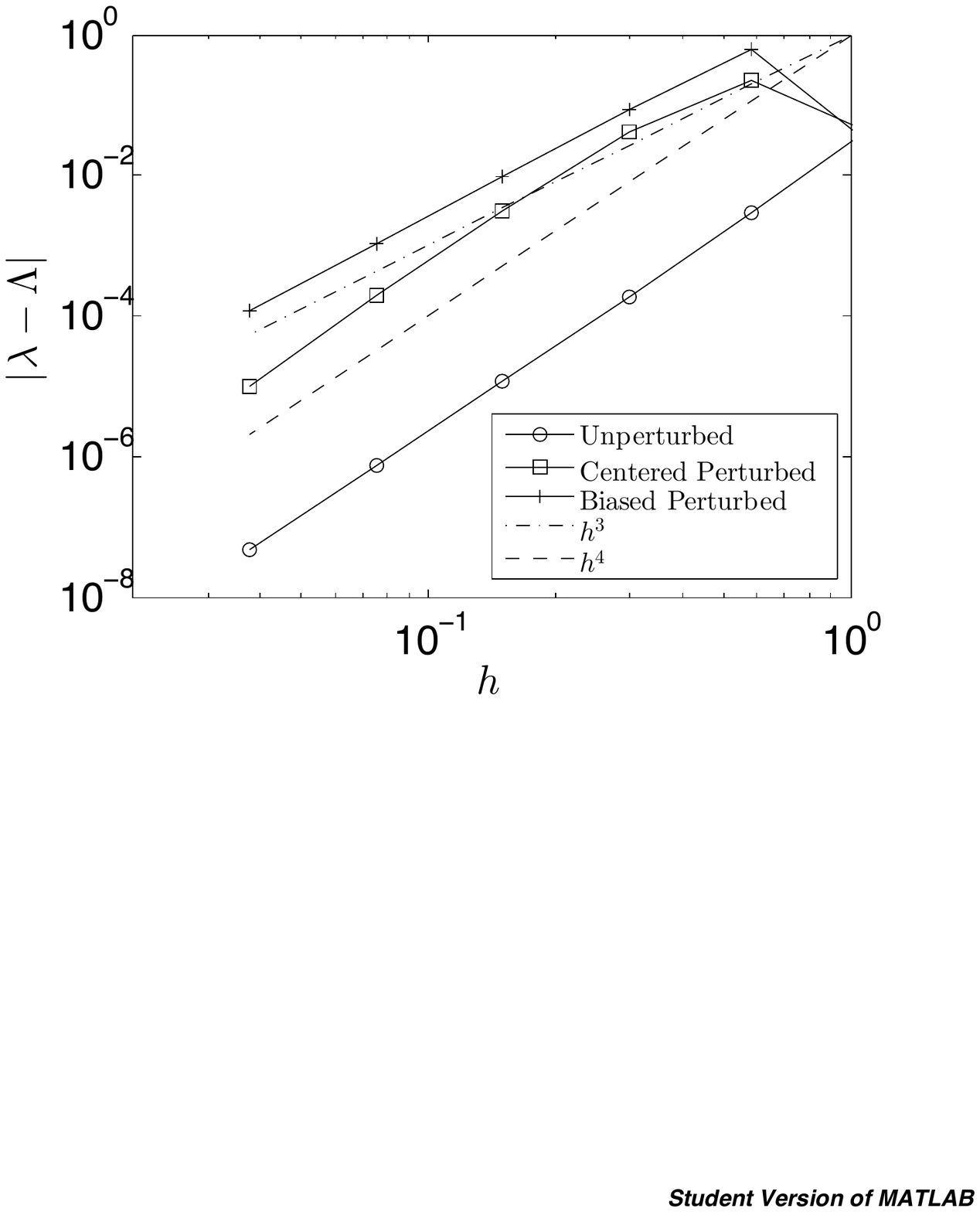}
	\caption{{\it Left:}. Convergence rates of an eigenvalue for $(x-z^2)^2 + y^2 + z^2 - 1 = 0$ surface using triangular mesh and typical Lagrange basis points.  {\it Right:} Convergence rates of the first eigenvalue for spherical surface using triangular mesh and unperturbed interpolation points, randomly perturbed interpolation points from a uniform distribution centered at 0 displacement, and randomly perturbed interpolation points from a uniform distribution centered at $0.5h^{k+1}$ displacement.}
\end{figure}

Finally, we attempted to break this even-odd superconvergence behavior by perturbing the points used to interpolate the sphere. First we perturbed points by $O(h^{k+1})$ using a uniform distribution on $h^{k+1}(-1,1)$. In expectation we then have a radial perturbation of 0. The superconvergence of $O(h^{k+2})$ for even $k$ values persisted for this situation. We then biased the previous distribution to be $h^{k+1}(-0.5,1.5)$ so that perturbations tended to be outward of the surface of the sphere. This led to convergence of $O(h^{k+1})$ for both even and odd values of $k$.  Numerical results for the error of the first eigenvalue of the sphere when $r=3$ and $k=2$ for an unperturbed sphere as well as these two perturbations are seen in the right plot in Figure \ref{fighearteigs}.

\begin{remark}
The perturbations of interpolation points on the sphere described above satisfy the abstract assumptions \eqref{e:d_estim} through \eqref{e:lift_estim} and so fit within the basic eigenvalue convergence theory of Section \ref{sec3}.  That theory is thus sharp without additional assumptions, but clearly does not satisfactorily explain many cases of interest.  
\end{remark}
\begin{remark}
The superconvergence effects we have observed appear to be relatively robust.  They may still occur even in applications where the continuous surface is not interpolated exactly as long as surface approximation errors at the interpolation points are uniformly distributed inside and outside of $\gamma$ with zero mean.  
\end{remark}

\bibliographystyle{siam}
\bibliography{Bibliography}

\end{document}